\documentclass[11pt,twoside]{article}

\newcommand{\HeadTitle}{Algebraic representatives of the ratios 
  \ensuremath{\zeta(2n+1)/\pi^{2n}} and 
  \ensuremath{\beta(2n)/\pi^{2n-1}}}

\newcommand{\HeadTitleTwo}{\begin{center}
\Large{\textit{Algebraic representatives of the ratios}}\end{center} 
\begin{center}
\Large{\textit{$\dfrac{\zeta(2n+1)}{\pi^{2n}}$ and $\dfrac{\beta(2n)}{\pi^{2n-1}}$}}
\end{center}}

\usepackage[margin=0.8in]{geometry}
\usepackage{amsmath, amssymb, amsfonts, mathtools}
\usepackage{amsthm}
\usepackage{mathrsfs}
\usepackage{stmaryrd}
\usepackage{esint}
\usepackage{eqnalign}

\usepackage{graphicx}
\usepackage{enumitem}
\usepackage[most]{tcolorbox}
\usepackage{float}
\usepackage{listings}
\usepackage{array}
\usepackage{booktabs}
\usepackage{xcolor}
\usepackage{emptypage}
\usepackage{tensor}
\usepackage{tabularx}

\usepackage{fancyhdr}
\usepackage{hyperref}
\usepackage[nameinlink,noabbrev]{cleveref}

\usepackage{titlesec}
\usepackage[T1]{fontenc}
\usepackage[utf8]{inputenc}
\usepackage{lmodern}
\usepackage{titling}

\usepackage[backend=biber,style=numeric]{biblatex}
\newcommand{\EulerB}[2]{\left\langle {#1 \atop #2} \right\rangle^{\!B}}
\newcommand{\Arg}{\operatorname{Arg}}
\providecommand{\HeadTitle}{} 
\providecommand{\HeadTitleTwo}{} 
\providecommand{\HeadAuthor}{Luc Ramsès TALLA WAFFO} 

\titleformat{\section}[block]
  {\normalfont\large\bfseries\itshape\centering}
  {§\thesection.}
  {1em}
  {}

\titleformat{\subsection}
  {\normalfont\normalsize\bfseries}
  {\thesubsection}
  {1em}
  {}

\newtheorem{theorem}{Theorem}[section]
\newtheorem{lemma}[theorem]{Lemma}
\newtheorem{proposition}[theorem]{Proposition}
\newtheorem{corollary}[theorem]{Corollary}

\newtheorem{remark}[theorem]{Remark}

\crefname{example}{example}{examples}
\Crefname{example}{Example}{Examples}

\crefname{corollary}{corollary}{corollaries}
\Crefname{corollary}{Corollary}{Corollaries}

\crefname{definition}{definition}{definitions}
\Crefname{definition}{Definition}{Definitions}

\crefname{remark}{remark}{remarks}
\Crefname{remark}{Remark}{Remarks}

\crefname{conjecture}{conjecture}{conjectures}
\Crefname{conjecture}{Conjecture}{Conjectures}

\crefname{lemma}{lemma}{lemmas}
\Crefname{lemma}{Lemma}{Lemmas}

\crefname{proposition}{proposition}{propositions}
\Crefname{proposition}{Proposition}{Propositions}

\crefname{theorem}{theorem}{theorems}
\Crefname{theorem}{Theorem}{Theorems}

\numberwithin{equation}{section}

\usepackage{fontspec}
\begin{document}

\thispagestyle{fancy}

\vspace{0.2cm}

\begin{center}
\Large{\HeadTitleTwo}
\end{center}

\hspace{3cm}

\begin{center}
Luc Ramsès TALLA WAFFO \\
Technische Universität Darmstadt\\
Karolinenplatz 5, 64289 Darmstadt, Germany\\
ramses.talla@stud.tu-darmstadt.de\\
\vspace{0.5cm}
February 18, 2026
\end{center}

\begin{abstract}
In \cite{TallaWaffo2025arxiv2511.02843} we introduced even polynomials $\Xi_n, \Lambda_n\in\mathbb{Q}[x]$ arising from integral representations of
$\beta(2n)/\pi^{2n-1}$ and $\zeta(2n+1)/\pi^{2n}$. In this paper we give explicit
closed formulae for these polynomials in terms of Eulerian numbers and study
their structural properties. These properties may prove useful in studying the arithmetic nature of the ratios $\beta(2n)/\pi^{2n}$ and $\zeta(2n+1)/\pi^{2n+1}.$
\end{abstract}

\vspace{0.2cm}

\paragraph{Notations.}
Throughout this manuscript, we adopt the following conventions. The set $\mathbb{N}_0$ denotes the set of all non-negative integers, while $\mathbb{N}$ denotes the set of strictly positive integers. The symbol $\displaystyle \genfrac{\langle}{\rangle}{0pt}{}{m}{k}$ denotes the Eulerian numbers of type $A$. The quantities $B_{2n}$ and $E_{2n}$ denote, respectively, the Bernoulli numbers and the Euler (secant) numbers. Finally, $\displaystyle \EulerB{n}{k}$ denotes the Eulerian numbers of type $B$.

\vspace{0.5cm}

\section*{Introduction}
\addcontentsline{toc}{section}{Introduction}

Euler’s classical evaluation
\(
\zeta(2)=\dfrac{\pi^{2}}{6}
\)
and, more generally, his formula
\[
\zeta(2n)=(-1)^{n+1}\frac{B_{2n}(2\pi)^{2n}}{2(2n)!}
\]
for the even zeta values stand among the landmarks of analytic number theory. By contrast, no comparable closed formula is known for the odd values \(\zeta(2n+1)\). A similar dichotomy appears for Dirichlet’s beta function
\[
\beta(s)=\sum_{m=0}^{\infty}\frac{(-1)^m}{(2m+1)^s},
\]
whose odd special values admit explicit evaluations in terms of Euler numbers, whereas the even values \(\beta(2n)\) remain far less understood. The arithmetic nature of the normalized quantities
\[
\frac{\zeta(2n+1)}{\pi^{2n+1}},
\qquad
\frac{\beta(2n)}{\pi^{2n}}
\]
thus continues to raise deep and largely open questions.

\vspace{0.2cm}

In the earlier work \cite{TallaWaffo2025arxiv2511.02843}, integral representations for the normalized values \(\zeta(2n+1)/\pi^{2n}\) and \(\beta(2n)/\pi^{2n-1}\) were derived in a unified analytic framework. In particular, it was shown that for every \(n\in\mathbb{N}\), there exist even polynomials \(\Xi_n,\Lambda_n\in\mathbb{Q}[x]\) such that
\[
\frac{\beta(2n)}{\pi^{2n-1}}
=
\int_0^1 \frac{x\,\Xi_n(x)}{\sqrt{1-x^2}\,\operatorname{arctanh}(x)}\,dx,
\qquad
\frac{\zeta(2n+1)}{\pi^{2n}}
=
\int_0^1 \frac{x\,\Lambda_n(x)}{\operatorname{arctanh}(x)}\,dx.
\]
These polynomial families arise naturally from Malmsten-type integrals and from rational representations of polylogarithms of negative order in terms of Eulerian numbers of type \(A\) and type \(B\).

\vspace{0.2cm}

The aim of the present paper is to study these polynomials systematically. We derive explicit closed formulae for \(\Xi_n\) and \(\Lambda_n\), establish their main algebraic properties, and analyze their zeros in detail. In particular, we determine their leading coefficients and boundary values, prove that all real zeros lie in \([-1,1]\), and show that for \(n\ge2\) all zeros are real and simple. We also prove interlacing properties between consecutive polynomials and derive consequences such as log-concavity and unimodality of the coefficient sequences.

\vspace{0.2cm}

A substantial part of the paper is devoted to the asymptotic zero distribution. In \Cref{part:asymptotic_zero_distribution} we show that the real zeros of the normalized polynomials associated with \(\Xi_n\) and \(\Lambda_n\) become equidistributed with respect to the same explicit limiting law on \((0,1)\). We determine both the limiting density and the corresponding distribution function, and from these obtain quantitative information on the behavior of the smallest and largest zeros. This reveals a strong accumulation of zeros near the endpoints, with different asymptotic regimes at \(0\) and \(1\).

\vspace{0.2cm}

In this way, the paper extends the analytic framework of \cite{TallaWaffo2025arxiv2511.02843} and brings out a rich interaction between special values of Dirichlet series, Eulerian combinatorics, real-rooted polynomials, and asymptotic zero distributions.

\vspace{0.2cm}

The paper is organized as follows:
\begin{itemize}
  \item In \Cref{part:mathematical_motives} we explain how these polynomial families arise in connection with the irrationality questions surrounding the ratios \(\zeta(2n+1)/\pi^{2n+1}\) and \(\beta(2n)/\pi^{2n}\).
  \item In \Cref{part:preliminaries} we collect auxiliary lemmas used throughout the paper.
  \item In \Cref{part:existence_and_closed_forms} we recover the polynomials and derive explicit closed-form expressions for their coefficients.
  \item In \Cref{part:algebraic_and_analytic_structure} we establish their main structural properties, including real-rootedness and interlacing.
  \item In \Cref{part:asymptotic_zero_distribution} we study the limiting distribution of their zeros and derive the explicit asymptotic laws governing them.
\end{itemize}

\part{Mathematical motivation}\label[part]{part:mathematical_motives}

A key ingredient underlying our work is the very simple identity
\[
\int_0^1 \frac{x^{2m}}{\sqrt{1-x^2}}\,dx
= \frac{1}{2}\,B\!\left(m+\frac{1}{2},\frac{1}{2}\right)
= \frac{\sqrt{\pi}}{2}\,\frac{\Gamma\!\left(m+\tfrac{1}{2}\right)}{\Gamma(m+1)}
= \frac{\pi}{2}\,\frac{(2m-1)!!}{(2m)!!},
\]
valid for every $m\in\mathbb{N}_0$. Here $n!!$ denotes the double factorial, i.e.
\[
(2m-1)!! = 1\cdot 3\cdot 5\cdots (2m-1),\qquad
(2m)!! = 2\cdot 4\cdot 6\cdots (2m),
\]
with the usual conventions $(-1)!! = 0!! = 1$. The identity itself follows by the substitution $t=x^2$, which transforms the integral into
\[
\frac{1}{2}\int_0^1 t^{m-\frac12}(1-t)^{-\frac12}\,dt
= \frac{1}{2}\,B\!\left(m+\frac12,\frac12\right),
\]
and then by using the relation between the Beta and Gamma functions together with the standard formulae expressing $\Gamma(m+\tfrac12)/\Gamma(m+1)$ in terms of double factorials. As a direct consequence, the quantity $\displaystyle\int_0^1 \frac{P_n(x)}{\sqrt{1-x^2}}\,dx$ is always a rational multiple of $\pi$ when $P_n$ is an even polynomial, that is, a polynomial of the form:
\begin{equation}\label{eq:form}a_n\,x^{2n} + a_{n-1}\,x^{2n-2} + a_{n-2}\,x^{2n-4} + \ldots + a_1\,x^{2} + a_0 \quad \text{where} \quad \begin{pmatrix}
a_0\\
a_1\\
\vdots\\
a_n
\end{pmatrix}
 \in \mathbb{Q}^{n+1}.\end{equation} In \cite{TallaWaffo2025arxiv2511.02843} we proved the existence of polynomials $\Xi_n(x)$ and $\Lambda_n(x)$ of the kind \eqref{eq:form} such that the following representations hold:
\begin{equation}\label{eq:integral_definition}
\frac{\beta(2n)}{\pi^{2n-1}}=\int_0^1 \frac{x\,\Xi_n(x)}{\sqrt{1-x^2}\,\operatorname{arctanh}x}\,dx,
\qquad
\frac{\zeta(2n+1)}{\pi^{2n}}=\int_0^1 \frac{x\,\Lambda_n(x)}{\operatorname{arctanh}x}\,dx.
\end{equation}  We deduce that the integrals
\(\displaystyle
\int_0^1 \frac{\Lambda_n(x)}{\sqrt{1-x^2}}\,dx
\qquad\text{and}\qquad
\int_0^1 \frac{\Xi_n(x)}{\sqrt{1-x^2}}\,dx
\)
are always rational multiples of $\pi$ for every $n \in \mathbb{N}$. By the table of \cref{prop:explicit_formulae_selected_weights}, we even deduce that these quantities are strictly positive. That is, there exist $(p_n^{A}, q_n^{A}), (p_n^{B}, q_n^{B}) \in \mathbb{N}^{2}$ satisfying
\begin{equation}\label{eq:integral_pi_one}
\int_0^1 \frac{\Lambda_n(x)}{\sqrt{1-x^2}}\,dx = \frac{p_n^{A}}{q_n^{A}} \pi
\qquad\text{and}\qquad
\int_0^1 \frac{\Xi_n(x)}{\sqrt{1-x^2}}\,dx=\frac{p_n^{B}}{q_n^{B}}\pi.
\end{equation}
The uniform convergence of these polynomials to $0$ on $(0,1)$ established in \cref{theorem:uniform-to-zero} implies that 
\[\frac{p_n^{A}}{q_n^{A}} \to 0 
\qquad\text{and}\qquad
\frac{p_n^{B}}{q_n^{B}} \to 0 \quad \text{as } n\to\infty. 
\] Now assuming that $\dfrac{\beta(2n)}{\pi^{2n}}$ and $\dfrac{\zeta(2n+1)}{\pi^{2n+1}}$ are rational, we may write
\[
\dfrac{\beta(2n)}{\pi^{2n-1}} = \frac{s_n^{B}}{t_n^{B}} \pi
\qquad\text{and}\qquad
\dfrac{\zeta(2n+1)}{\pi^{2n}} = \frac{s_n^{A}}{t_n^{A}} \pi.
\]
with some positive integers $s_n^{A}, s_n^{B}, t_n^{A}$ and $t_n^{B}$. Using \eqref{eq:integral_pi_one} yields
\[
\dfrac{\beta(2n)}{\pi^{2n-1}} = \frac{s_n^{B}\,q_n^{B}}{t_n^{B}\,p_n^{B}} \int_0^1 \frac{\Xi_n(x)}{\sqrt{1-x^2}}\,dx
\qquad\text{and}\qquad
\dfrac{\zeta(2n+1)}{\pi^{2n}} = \frac{s_n^{A} \,q_n^{A}}{t_n^{A}\,p_n^{A}} \int_0^1 \frac{\Lambda_n(x)}{\sqrt{1-x^2}}\,dx.
\]

Going back to \eqref{eq:integral_definition} gives

\[
\int_0^1 \frac{x\,\Xi_n(x)}{\sqrt{1-x^2}\,\operatorname{arctanh}x}\,dx = \frac{s_n^{B}\,q_n^{B}}{t_n^{B}\,p_n^{B}} \int_0^1 \frac{\Xi_n(x)}{\sqrt{1-x^2}}\,dx
\quad\text{,}\quad
\int_0^1 \frac{x\,\Lambda_n(x)}{\operatorname{arctanh}x}\,dx = \frac{s_n^{A} \,q_n^{A}}{t_n^{A}\,p_n^{A}} \int_0^1 \frac{\Lambda_n(x)}{\sqrt{1-x^2}}\,dx.
\]
This is equivalent to
\[
\begin{cases}
\displaystyle\int_0^1 \Xi_n(x)\left(\frac{x\,}{\sqrt{1-x^2}\,\operatorname{arctanh}x} - \frac{s_n^{B}\,q_n^{B}}{t_n^{B}\,p_n^{B}} \frac{1}{\sqrt{1-x^2}}\right)\,dx = 0\\
\vspace{0.3cm}\\
\displaystyle\int_0^1 \Lambda_n(x)\left(\frac{x\,}{\operatorname{arctanh}x}\,-\frac{s_n^{A} \,q_n^{A}}{t_n^{A}\,p_n^{A}} \frac{1}{\sqrt{1-x^2}}\right)\,dx = 0.
\end{cases}
\]

These last equalities may serve as a natural starting point for a more
robust approach to proving the irrationality of the ratios
\(\beta(2n)/\pi^{2n}\) and \(\zeta(2n+1)/\pi^{2n+1}\). In particular, a
detailed analysis of the structural and analytic properties of the
polynomials involved is likely to yield further insight into this
problem. The present article is devoted to the investigation of the
structure of these polynomials and of several closely related properties.

\part{Preliminaries}\label[part]{part:preliminaries}

\begin{lemma}\label[lemma]{lemma:factorial_root_goes_to_infty}
As $n\to\infty$ one has
\[
\bigl((2n-1)!\bigr)^{-\frac{1}{n-1}}\longrightarrow 0.
\qquad\text{,}\qquad
\frac{4}{\bigl(2(2n)!\bigr)^{\frac{1}{n-1}}}\longrightarrow 0.
\]
\end{lemma}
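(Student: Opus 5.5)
The plan is to show that the logarithm of each quantity tends to $-\infty$, using only an elementary lower bound for factorials. The key estimate is $m!\ge (m/e)^m$, which follows from $e^m=\sum_{k\ge0} m^k/k!\ge m^m/m!$. Applying it with $m=2n-1$ gives
\[
\log\bigl((2n-1)!\bigr)\ \ge\ (2n-1)\bigl(\log(2n-1)-1\bigr).
\]
Dividing by $n-1$ (we take $n\ge2$) and using $\frac{2n-1}{n-1}\ge 2$, we obtain, as soon as $\log(2n-1)\ge1$,
\[
\frac{1}{n-1}\log\bigl((2n-1)!\bigr)\ \ge\ 2\bigl(\log(2n-1)-1\bigr)\longrightarrow\infty .
\]
Hence $\bigl((2n-1)!\bigr)^{-1/(n-1)}=\exp\bigl(-\tfrac{1}{n-1}\log((2n-1)!)\bigr)\to0$, which is the first limit.

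The second limit follows the same route. Since $2(2n)!\ge(2n-1)!$, we have
\[
\bigl(2(2n)!\bigr)^{1/(n-1)}\ \ge\ \bigl((2n-1)!\bigr)^{1/(n-1)}\longrightarrow\infty
\]
by the first part. Therefore $4/\bigl(2(2n)!\bigr)^{1/(n-1)}\to0$, because the constant $4$ does not affect the limit.

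There is no real obstacle here. The only point needing care is that the exponent is $1/(n-1)$ rather than $1/(2n)$. This only helps: it multiplies the growth rate $\log(2n)$ by roughly $2$. It also means the expression is defined only for $n\ge2$, which is all that is needed for a limit as $n\to\infty$. If a sharper rate were wanted, Stirling's formula would give $\bigl((2n-1)!\bigr)^{1/(n-1)}\sim (2n/e)^2$, but the crude bound above already suffices.
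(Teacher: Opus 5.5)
Your proof is correct and follows essentially the same route as the paper: both rest on the bound $m!\ge (m/e)^m$ applied with $m=2n-1$, followed by taking $(n-1)$-st roots. The differences are cosmetic. You justify the bound for every $m$ via $e^m\ge m^m/m!$, whereas the paper only asserts it for large $m$. You also deduce the second limit from the first via $2(2n)!\ge(2n-1)!$ instead of reapplying the bound with $m=2n$.
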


\begin{proof}
Choose $N\in\mathbb{N}$ such that for all $m\ge N$ one has
$\displaystyle m!\ge \left(\frac{m}{e}\right)^m$.
Applying this with $m=2n-1$ gives, for $n$ large enough,
$\displaystyle (2n-1)!\ge \left(\frac{2n-1}{e}\right)^{2n-1}$, hence
$\displaystyle \bigl((2n-1)!\bigr)^{\frac{1}{n-1}}
\ge \left(\frac{2n-1}{e}\right)^{\frac{2n-1}{n-1}}
= \left(\frac{2n-1}{e}\right)^{2+\frac{1}{n-1}}\to\infty$.
Therefore $\displaystyle \bigl((2n-1)!\bigr)^{-\frac{1}{n-1}}\to 0$.

Similarly,
$\displaystyle \bigl(2(2n)!\bigr)^{\frac{1}{n-1}}
=2^{\frac{1}{n-1}}\bigl((2n)!\bigr)^{\frac{1}{n-1}}$
with $\displaystyle 2^{\frac{1}{n-1}}\to 1$, and applying the same bound with $m=2n$ yields
$\displaystyle (2n)!\ge \left(\frac{2n}{e}\right)^{2n}$, so
$\displaystyle \bigl((2n)!\bigr)^{\frac{1}{n-1}}
\ge \left(\frac{2n}{e}\right)^{\frac{2n}{n-1}}
= \left(\frac{2n}{e}\right)^{2+\frac{2}{n-1}}\to\infty$.
Hence $\displaystyle \bigl(2(2n)!\bigr)^{\frac{1}{n-1}}\to\infty$, and thus
$\displaystyle \frac{4}{\bigl(2(2n)!\bigr)^{\frac{1}{n-1}}}\to 0$.
\end{proof}

\begin{lemma}\label[lemma]{lemma:Enk}
Let $k\in\mathbb{N}_0$, $n\in\mathbb{N}$ with $0\le k\le n-1$. Define
\[
E_{n,k}(x):=(1-x^2)^k\Big((1+x)^{2n-2k-1}-(1-x)^{2n-2k-1}\Big).
\]
Then $E_{n,k}$ is an odd polynomial of degree $2n-1$ and admits the representation
\[
E_{n,k}(x)
=
2x\sum_{t=0}^{n-1}
\left(
\sum_{i=0}^{k}
(-1)^i
\binom{k}{i}
\binom{2n-2k-1}{\,2t-2i+1\,}
\right)
x^{2t},
\]
where, throughout, binomial coefficients are understood to vanish whenever the lower index is negative or exceeds the upper index.
\end{lemma}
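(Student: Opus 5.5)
We prove the statement by direct expansion of both factors, followed by a parity argument and a degree count. Set $m:=2n-2k-1$, which is odd and at least $1$ since $k\le n-1$.

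First, by the binomial theorem,
\[
(1+x)^m-(1-x)^m=\sum_{j=0}^{m}\binom{m}{j}\bigl(1-(-1)^j\bigr)x^j = 2\sum_{s\ge 0}\binom{m}{2s+1}x^{2s+1},
\]
since the even-index terms cancel and the odd-index terms double. This is an odd polynomial of degree $m$ with leading coefficient $2$. Second,
\[
(1-x^2)^k=\sum_{i=0}^{k}(-1)^i\binom{k}{i}x^{2i}.
\]
Multiplying an even polynomial by an odd one gives an odd polynomial, so $E_{n,k}$ is odd. Its degree is $2k+m=2n-1$, with leading coefficient $2\cdot(-1)^k\neq 0$, so the degree is exactly $2n-1$.

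To get the representation, we form the Cauchy product. Factoring out $2x$,
\[
E_{n,k}(x)=2x\sum_{i=0}^{k}\sum_{s\ge0}(-1)^i\binom{k}{i}\binom{m}{2s+1}x^{2i+2s}.
\]
We collect terms by $t:=i+s$, so that $s=t-i$ and $2s+1=2t-2i+1$. The coefficient of $x^{2t}$ inside the bracket is then
\[
\sum_{i=0}^{k}(-1)^i\binom{k}{i}\binom{2n-2k-1}{2t-2i+1}.
\]
Terms with $i>t$ have negative lower index and vanish by the stated convention, so the sum may run over all $0\le i\le k$. It remains to check the range of $t$. Since $E_{n,k}/(2x)$ has degree $2n-2$, all coefficients with $t>n-1$ vanish, and the outer sum may stop at $t=n-1$. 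This can also be seen directly: for $t\ge n$ and $i\le k$ we have $2t-2i+1\ge 2n-2k+1>m$, so every binomial coefficient in the sum vanishes.

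No step is a real obstacle. The only care needed is the bookkeeping of the summation ranges, namely justifying the extension of the sums via the vanishing convention for binomial coefficients and the truncation at $t=n-1$, and both are handled by the degree count above.
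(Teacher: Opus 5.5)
Your proposal is correct and follows essentially the same route as the paper: expand both factors binomially, form the Cauchy product, and use the vanishing convention to extend the $i$-sum to $0\le i\le k$ and truncate the $t$-sum at $n-1$. The paper routes the Cauchy product through the substitution $x\mapsto\sqrt{x}$, which is only cosmetic. You also explicitly verify that the degree is exactly $2n-1$ via the leading coefficient $2(-1)^k$, a point the paper's proof leaves implicit.
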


\begin{proof}
Set $\displaystyle P_k(x):=(1-x^2)^k$ and $\displaystyle Q_{n,k}(x):=(1+x)^{2n-2k-1}-(1-x)^{2n-2k-1}$. By the binomial theorem,
$\displaystyle P_k(x)=\sum_{i=0}^{k}\binom{k}{i}(-1)^i x^{2i}$ and
$\displaystyle Q_{n,k}(x)=2\sum_{r=0}^{n-k-1}\binom{2n-2k-1}{2r+1}x^{2r+1}$.
With the convention $\displaystyle \binom{a}{b}=0$ for $b<0$ or $b>a$, this becomes
$\displaystyle P_k(x)=\sum_{i=0}^{\infty}\binom{k}{i}(-1)^i x^{2i}$ and
$\displaystyle Q_{n,k}(x)=2\sum_{r=0}^{\infty}\binom{2n-2k-1}{2r+1}x^{2r+1}$.
Thus
$\displaystyle P_k(\sqrt{x})=\sum_{i=0}^{\infty}\binom{k}{i}(-1)^i x^i$ and
$\displaystyle Q_{n,k}(\sqrt{x})=2\sqrt{x}\sum_{r=0}^{\infty}\binom{2n-2k-1}{2r+1}x^r$, so by the Cauchy product
\[
P_k(\sqrt{x})Q_{n,k}(\sqrt{x})
=
2\sqrt{x}\sum_{t=0}^{\infty}
\left(\sum_{i=0}^{t}(-1)^i\binom{k}{i}\binom{2n-2k-1}{2(t-i)+1}\right)x^t.
\]
Now $\displaystyle \binom{k}{i}=0$ for $i>k$, while $\displaystyle \binom{2n-2k-1}{2(t-i)+1}=0$ for $t>n-1$, hence
\[
P_k(\sqrt{x})Q_{n,k}(\sqrt{x})
=
2\sqrt{x}\sum_{t=0}^{n-1}
\left(\sum_{i=0}^{k}(-1)^i\binom{k}{i}\binom{2n-2k-1}{2(t-i)+1}\right)x^t.
\]
Replacing $x$ by $x^2$ gives the claim.
\end{proof}

\begin{lemma}\label[lemma]{lemma:wood_identity_for_type_B}
Let $m\in\mathbb{N}_0$ and $|z|<1$. Then
\[
\frac{\Im\!\bigl(\operatorname{Li}_{-m}(iz)\bigr)}{z}
=\frac{1}{(1+z^2)^{m+1}}\sum_{r=0}^{m}\EulerB{m}{r}\,(-z^2)^r.
\]
\end{lemma}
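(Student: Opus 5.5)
The plan is to expand $\operatorname{Li}_{-m}(iz)=\sum_{n\ge1} n^m (iz)^n$ termwise, which is legitimate since the series converges absolutely for $|z|<1$, and to extract the imaginary part. I take $z$ real with $|z|<1$, which is the setting in which the lemma is used; for complex $z$ the operator $\Im$ would not act termwise as below. Then $\Im(i^n)=0$ for even $n$, and $\Im(i^{2k+1})=(-1)^k$. Hence
\[
\frac{\Im\bigl(\operatorname{Li}_{-m}(iz)\bigr)}{z}=\sum_{k\ge0}(2k+1)^m(-1)^k z^{2k}=F_m(-z^2),
\qquad F_m(t):=\sum_{k\ge0}(2k+1)^m t^k .
\]
The statement is therefore equivalent to the classical generating-function identity for the type $B$ Eulerian numbers,
\[
F_m(t)=\frac{1}{(1-t)^{m+1}}\sum_{r=0}^{m}\EulerB{m}{r}t^r ,\qquad |t|<1,
\]
evaluated at $t=-z^2$. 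Note that $|t|=z^2<1$, and $1-t=1+z^2$ gives exactly the denominator in the claim.

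If this identity may be quoted (it is due to Brenti, and in the paper it essentially serves as the definition of $\EulerB{m}{r}$), the proof is complete. Otherwise I prove it by induction on $m$.

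\textbf{Base case.} For $m=0$ we have $F_0(t)=1/(1-t)$ and $\EulerB{0}{0}=1$, so the identity holds.

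\textbf{Inductive step.} The sequence $F_m$ satisfies $F_{m+1}=(2t\,\tfrac{d}{dt}+1)F_m$, since this operator multiplies $t^k$ by $2k+1$. Write $F_m=N_m(t)/(1-t)^{m+1}$. Applying the operator gives
\[
N_{m+1}(t)=(1-t)\bigl(2tN_m'(t)+N_m(t)\bigr)+2(m+1)tN_m(t).
\]
Comparing coefficients of $t^r$ yields
\[
[t^r]N_{m+1}=(2r+1)[t^r]N_m+(2m-2r+3)[t^{r-1}]N_m .
\]
This is the standard recurrence
\[
\EulerB{m+1}{r}=(2r+1)\EulerB{m}{r}+(2m-2r+3)\EulerB{m}{r-1}
\]
for the type $B$ Eulerian numbers, so $[t^r]N_m=\EulerB{m}{r}$ for all $m$ by induction.

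The only real obstacle is this coefficient bookkeeping: checking that $2(m+1)$ combines with the $-t$ terms to give exactly $2m-2r+3$. Everything else is direct series manipulation.
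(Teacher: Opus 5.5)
Your proof is correct. Its first half matches the paper's: you take imaginary parts termwise and reduce the claim to $F_m(-z^2)$ with $F_m(t)=\sum_{k\ge0}(2k+1)^m t^k$; the paper only differs in splitting the index modulo $4$ instead of using $\Im(i^{2k+1})=(-1)^k$. The two proofs diverge in how they establish $F_m(t)=(1-t)^{-(m+1)}\sum_{r}\EulerB{m}{r}t^r$. The paper quotes the type-$B$ Worpitzky identity $\sum_{\ell=0}^{m}\binom{m+k-\ell}{m}\EulerB{m}{\ell}=(2k+1)^m$, multiplies by $t^k$, and sums against $\sum_{j\ge0}\binom{m+j}{m}t^j=(1-t)^{-(m+1)}$, which is a one-line, non-inductive argument. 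You instead apply $2t\frac{d}{dt}+1$ and match the resulting coefficient recurrence with the triangular recurrence $\EulerB{m+1}{r}=(2r+1)\EulerB{m}{r}+(2m-2r+3)\EulerB{m}{r-1}$; your coefficient bookkeeping is correct. Each approach imports one standard fact about the type-$B$ numbers: Worpitzky in the paper, the triangular recurrence in yours, the latter being the more common definition. Yours has a side benefit: the numerator relation $N_{m+1}=\bigl((2m+1)t+1\bigr)N_m+2t(1-t)N_m'$ is exactly the polynomial recurrence that the paper later states without proof in \cref{lem:log-derivative-Xi-Lambda}, so your argument establishes that as well. Your restriction to real $z$ is necessary, not merely cautious. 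The paper's proof also takes imaginary parts termwise, and off the real axis the statement is false: for $m=0$ and $z=i/2$ the left side is $0$ while the right side is $4/3$. You should also exclude $z=0$, or read the left side as a limit there.
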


\begin{proof}
For $|w|<1$, the polylogarithm is given by $\displaystyle \operatorname{Li}_{-m}(w)=\sum_{k=1}^{\infty} k^m w^k$. Thus, with $w=iz$,
$\displaystyle \operatorname{Li}_{-m}(iz)=\sum_{k=1}^{\infty} k^m i^k z^k$, and taking imaginary parts yields
$\displaystyle \Im\!\bigl(\operatorname{Li}_{-m}(iz)\bigr)=\sum_{j=0}^{\infty}\bigl((4j+1)^m z^{4j+1}-(4j+3)^m z^{4j+3}\bigr)$.
Dividing by $z$ and setting $\displaystyle x:=-z^2$, so that $\displaystyle z^{4j}=x^{2j}$ and $\displaystyle z^{4j+2}=(-z^2)^{2j+1}=x^{2j+1}$, we obtain
\begin{equation}
\frac{\Im\!\bigl(\operatorname{Li}_{-m}(iz)\bigr)}{z}
=\sum_{j=0}^{\infty}(4j+1)^m x^{2j}+\sum_{j=0}^{\infty}(4j+3)^m x^{2j+1}
=\sum_{k=0}^{\infty}(2k+1)^m x^k,
\qquad x=-z^2.
\label{eq:log_expansion}
\end{equation}

Now use the Worpitzky-type identity for type-$B$ Eulerian numbers,
$\displaystyle \sum_{\ell=0}^{m}\binom{m+k-\ell}{m}\EulerB{m}{\ell}=(2k+1)^m$ for $k\ge0$.
Multiplying by $\displaystyle x^k$ and summing over $k\ge0$ gives
$\displaystyle \sum_{k=0}^{\infty}(2k+1)^m x^k
=\sum_{\ell=0}^{m}\EulerB{m}{\ell}\sum_{k=0}^{\infty}\binom{m+k-\ell}{m}x^k$.
In the inner sum, writing $\displaystyle k=\ell+j$ gives
$\displaystyle \sum_{k=0}^{\infty}\binom{m+k-\ell}{m}x^k
=\sum_{j=0}^{\infty}\binom{m+j}{m}x^{j+\ell}
=x^\ell\sum_{j=0}^{\infty}\binom{m+j}{m}x^j$,
and by the binomial series,
$\displaystyle \sum_{j=0}^{\infty}\binom{m+j}{m}x^j=\frac{1}{(1-x)^{m+1}}$ for $|x|<1$.
Hence
\begin{equation}
\sum_{k=0}^{\infty}(2k+1)^m x^k
=\frac{1}{(1-x)^{m+1}}\sum_{\ell=0}^{m}\EulerB{m}{\ell}x^\ell,
\qquad |x|<1.
\label{eq:final}
\end{equation}

Combining \eqref{eq:log_expansion} and \eqref{eq:final} with $\displaystyle x=-z^2$ yields
$\displaystyle \frac{\Im\!\bigl(\operatorname{Li}_{-m}(iz)\bigr)}{z}
=\frac{1}{(1+z^2)^{m+1}}\sum_{\ell=0}^{m}\EulerB{m}{\ell}(-z^2)^\ell$,
which proves the claim.
\end{proof}

\section{Limiting Ratios of Series Related to Polylogarithms and Eulerian Polynomials}

\begin{lemma}\label[lemma]{lemma:affine-series-ratio}
Let \(\alpha,\beta>0\), and let
\[
\mathbb D:=\{x\in\mathbb C:\ |x|<1\},\qquad \mathbb D^\ast:=\mathbb D\setminus(-1,0].
\]
For \(m\in\mathbb N\) and \(x\in\mathbb D^\ast\), define
\[
T_m^{(\alpha,\beta)}(x):=\sum_{n\ge0}(\alpha n+\beta)^m x^n.
\]
Then
\[
\frac1m\,\frac{T_{m+1}^{(\alpha,\beta)}(x)}{T_m^{(\alpha,\beta)}(x)}
\longrightarrow
\frac{\alpha}{-\log x},
\qquad m\to\infty,
\]
locally uniformly on \(\mathbb D^\ast\), where \(\log\) denotes the principal branch.
\end{lemma}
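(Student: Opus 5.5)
Since $x\in\mathbb D^\ast$ lies off the cut, we write $x=e^{-s}$ with $s:=-\log x$ on the principal branch. Then $\operatorname{Re}s>0$ and $|\operatorname{Im}s|<\pi$, and $s$ ranges over a compact subset of this half-strip when $x$ ranges over a compact subset $K\subset\mathbb D^\ast$. The plan is to obtain a full asymptotic expansion of $T_m^{(\alpha,\beta)}(x)$ as $m\to\infty$. The dominant term should be $\dfrac{m!\,\alpha^m}{s^{m+1}}e^{\beta s/\alpha}$, uniformly on $K$. The claimed ratio then follows directly:
\[
\frac1m\frac{T_{m+1}}{T_m}\approx \frac{(m+1)!\,\alpha^{m+1}s^{m+1}}{m\cdot m!\,\alpha^m s^{m+2}}=\frac{m+1}{m}\cdot\frac{\alpha}{s}\to\frac{\alpha}{-\log x}.
\]

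To get the expansion, consider the meromorphic function $F(u):=\dfrac{e^{\beta u}}{1-x e^{\alpha u}}$. Its Taylor expansion at $u=0$ is $\sum_n x^n e^{(\alpha n+\beta)u}=\sum_{m\ge0}T_m^{(\alpha,\beta)}(x)\,u^m/m!$, valid for small $|u|$ since $|x|<1$. Hence $T_m=m!\,[u^m]F(u)$, and we extract the coefficient with Cauchy's formula.

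The poles of $F$ are at $xe^{\alpha u}=1$, that is, at $u_k=(s+2\pi i k)/\alpha$ for $k\in\mathbb Z$. Because $|\operatorname{Im}s|<\pi$, the pole $u_0=s/\alpha$ is strictly closer to the origin than all the others, with $|u_0|<|u_{\pm1}|$ uniformly on $K$. The residue of $F(u)/u^{m+1}$ at $u_0$ is $-e^{\beta s/\alpha}\big/\bigl(\alpha (s/\alpha)^{m+1}\bigr)$. Applying the residue theorem on a circle of radius $R$ with $|u_0|<R<\min_{k\neq0}|u_k|$, chosen uniformly for $x\in K$ (possible by compactness), and avoiding the poles gives
\[
T_m^{(\alpha,\beta)}(x)=\frac{m!\,\alpha^{m}}{s^{m+1}}\,e^{\beta s/\alpha}\bigl(1+O(\rho^m)\bigr),\qquad \rho:=\frac{|u_0|}{R}<1,
\]
uniformly on $K$. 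This uses that $F$ is bounded on the circle uniformly in $x\in K$.

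Substituting this into the ratio gives $\dfrac{m+1}{m}\cdot\dfrac{\alpha}{s}\cdot\dfrac{1+O(\rho^{m+1})}{1+O(\rho^m)}$, which converges to $\alpha/(-\log x)$ locally uniformly. Moreover $T_m$ has no zeros on $K$ for large $m$, so the ratio is well defined there.

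The main obstacle is the uniformity of the radius choice. Near $x\to1$ we have $s\to0$, but $s\neq0$ on $K$, and the gap to $u_{\pm1}$ is at least of order $(2\pi-|\operatorname{Im}s|-\pi)/\alpha$. A single $R$ must avoid all poles and keep $\rho$ bounded away from $1$ on all of $K$. This works because $|s|$ is bounded above on $K$ and $|s\pm2\pi i|-|s|$ is bounded below there. The case $x$ near the cut $(-1,0]$ is exactly where $|u_0|=|u_{\pm1}|$ can occur, and this explains why the cut is excluded.
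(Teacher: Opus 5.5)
Your proposal follows the paper's proof step for step. Both extract $T_m=m!\,[u^m]\,e^{\beta u}/(1-xe^{\alpha u})$ by Cauchy's formula, isolate the simple pole $u_0=s/\alpha$ of strictly smallest modulus, move the contour past it with the residue theorem, and bound the outer integral. Your pole locations, the residue, the main term $m!\,\alpha^m e^{\beta s/\alpha}s^{-m-1}$, and the remark that $T_m\neq0$ on $K$ for large $m$ are all correct.

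\textbf{The step that fails.} The problem is the step you single out as the main obstacle. For an arbitrary compact $K\subset\mathbb D^\ast$ there need not be a single radius $R$ with $\sup_K|u_0(x)|<R<\inf_K\min_{k\neq0}|u_k(x)|$. Take $\alpha=1$ and $K=[e^{-10},e^{-1/100}]\subset(0,1)$. The left endpoint forces $R>10$, while the right endpoint forces $R<|1/100+2\pi i|<7$. Your justification, that $|s|$ is bounded above and $|s\pm2\pi i|-|s|$ is bounded below on $K$, only yields a uniform \emph{pointwise} gap $\delta:=\inf_{x\in K}\bigl(\min_{k\neq0}|u_k(x)|-|u_0(x)|\bigr)>0$. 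It does not place every $|u_0(x)|$ below every $|u_{\pm1}(x')|$. The paper's proof makes the identical claim ($0<d_0<d_1$), so this defect is shared rather than a departure from it.

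\textbf{The repair.} It uses only facts you already have. One option is to let the radius depend on $x$:
\begin{itemize}
\item Set $R(x):=|u_0(x)|+\delta/2$. This is continuous on $K$ and lies strictly between $|u_0(x)|$ and $\min_{k\neq0}|u_k(x)|$.
\item The compact set $\{(x,u):x\in K,\ |u|=R(x)\}$ therefore avoids the zeros of $1-xe^{\alpha u}$, so $|F|\le C$ there.
\item The outer integral is then at most $m!\,C\,R(x)^{-m}$. Dividing by the main term and using $|e^{\beta s/\alpha}|\ge1$ gives a relative error at most $C\alpha D\,\theta^m$, uniformly on $K$, where $D:=\sup_K|u_0|$ and $\theta:=D/(D+\delta/2)<1$.
\end{itemize}
Alternatively, localize. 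Near each $x_0\in\mathbb D^\ast$, a small closed disc does admit a single $R$, by continuity of $|u_0|$ and $|u_{\pm1}|$. Locally uniform convergence only requires uniformity on such discs, and finitely many of them cover any compact $K$. With either fix, your ratio step goes through unchanged.
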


\begin{proof}
Fix a compact set \(K\subset\mathbb D^\ast\). We prove the convergence uniformly on \(K\).

For \(m\in\mathbb N\),
\[
T_m^{(\alpha,\beta)}(x)=\sum_{n\ge0}(\alpha n+\beta)^m x^n.
\]
By Cauchy's integral formula for derivatives, for any \(r>0\) and \(n\ge0\),
\[
(\alpha n+\beta)^m
=
\left.\frac{d^m}{dz^m}e^{(\alpha n+\beta)z}\right|_{z=0}
=
\frac{m!}{2\pi i}\oint_{|z|=r}\frac{e^{(\alpha n+\beta)z}}{z^{m+1}}\,dz.
\]
Hence
\[
T_m^{(\alpha,\beta)}(x)
=
\frac{m!}{2\pi i}\oint_{|z|=r}\frac{1}{z^{m+1}}
\sum_{n\ge0}x^n e^{(\alpha n+\beta)z}\,dz.
\]

Since \(K\subset\mathbb D\) is compact, there exists \(M<1\) such that \(|x|\le M\) for all \(x\in K\).
Choose \(r>0\) such that
\[
Me^{\alpha r}<1.
\]
Then, for \(|z|=r\) and \(x\in K\),
\[
|x e^{\alpha z}|
\le |x|e^{\alpha|z|}
\le Me^{\alpha r}
<1,
\]
so the series \(\sum_{n\ge0}x^n e^{(\alpha n+\beta)z}\) converges absolutely and uniformly on
\(K\times\{|z|=r\}\). Therefore we may interchange sum and integral, obtaining
\[
\sum_{n\ge0}x^n e^{(\alpha n+\beta)z}
=
e^{\beta z}\sum_{n\ge0}(x e^{\alpha z})^n
=
\frac{e^{\beta z}}{1-x e^{\alpha z}}.
\]
Thus
\[
T_m^{(\alpha,\beta)}(x)
=
\frac{m!}{2\pi i}\oint_{|z|=r}\frac{g_x(z)}{z^{m+1}}\,dz,
\qquad
g_x(z):=\frac{e^{\beta z}}{1-x e^{\alpha z}}.
\]

The poles of \(g_x\) are the solutions of \(1-x e^{\alpha z}=0\), namely
\[
z_k(x)=\frac{-\log x+2\pi i k}{\alpha},
\qquad k\in\mathbb Z.
\]
Write
\[
\log x=\log|x|+i\Arg(x),
\qquad -\pi<\Arg(x)<\pi,
\]
and set
\[
z_0(x):=\frac{-\log x}{\alpha}.
\]
Since \(x\in\mathbb D^\ast\), we have \(-\log x\neq0\). Moreover, because \(K\subset\mathbb D^\ast\) is compact,
there exists \(b<\pi\) such that
\[
|\Arg(x)|\le b
\qquad (x\in K).
\]
Hence, for \(k\neq0\),
\[
|2\pi k-\Arg(x)|\ge 2\pi-b>b\ge |\Arg(x)|.
\]
It follows that
\[
|z_k(x)|^2
=
\frac{(-\log|x|)^2+(2\pi k-\Arg(x))^2}{\alpha^2}
>
\frac{(-\log|x|)^2+\Arg(x)^2}{\alpha^2}
=
|z_0(x)|^2.
\]
Thus \(z_0(x)\) is the unique pole of \(g_x\) of minimal modulus, uniformly for \(x\in K\).

Set
\[
d_0:=\sup_{x\in K}|z_0(x)|,
\qquad
d_1:=\inf_{\substack{x\in K\\ k\neq0}}|z_k(x)|.
\]
Then \(0<d_0<d_1\). Choose \(R>0\) such that
\[
d_0<R<d_1.
\]
For each \(x\in K\), the function \(g_x\) is meromorphic on and inside \(|z|=R\), with exactly one pole
in the closed annulus \(r\le |z|\le R\), namely \(z_0(x)\). By the residue theorem,
\[
\oint_{|z|=r}\frac{g_x(z)}{z^{m+1}}\,dz
=
\oint_{|z|=R}\frac{g_x(z)}{z^{m+1}}\,dz
-
2\pi i\,\operatorname{Res}\!\left(\frac{g_x(z)}{z^{m+1}},z_0(x)\right).
\]

The pole at \(z_0(x)\) is simple, and its residue is
\[
\operatorname{Res}(g_x,z_0(x))
=
\frac{e^{\beta z_0(x)}}{(1-x e^{\alpha z})'|_{z=z_0(x)}}
=
\frac{e^{\beta z_0(x)}}{-\alpha x e^{\alpha z_0(x)}}
=
-\frac{e^{\beta z_0(x)}}{\alpha},
\]
since \(x e^{\alpha z_0(x)}=1\). Therefore,
\[
\operatorname{Res}\!\left(\frac{g_x(z)}{z^{m+1}},z_0(x)\right)
=
-\frac{e^{\beta z_0(x)}}{\alpha\,z_0(x)^{m+1}},
\]
and hence
\[
T_m^{(\alpha,\beta)}(x)
=
\frac{m!}{2\pi i}\oint_{|z|=R}\frac{g_x(z)}{z^{m+1}}\,dz
+
\frac{m!e^{\beta z_0(x)}}{\alpha\,z_0(x)^{m+1}}.
\]

Now \(1-xe^{\alpha z}\neq0\) on \(K\times\{|z|=R\}\), and this set is compact. Hence
\[
|g_x(z)|\le C
\qquad (x\in K,\ |z|=R)
\]
for some constant \(C>0\). Thus
\[
\left|
\frac{m!}{2\pi i}\oint_{|z|=R}\frac{g_x(z)}{z^{m+1}}\,dz
\right|
\le
\frac{m!}{2\pi}(2\pi R)\frac{C}{R^{m+1}}
=
m!CR^{-m},
\]
uniformly for \(x\in K\).

Since \(K\) is compact and \(0\notin K\), there exists \(c>0\) such that
\[
|z_0(x)|\ge c
\qquad (x\in K).
\]
Therefore
\[
T_m^{(\alpha,\beta)}(x)
=
\frac{m!e^{\beta z_0(x)}}{\alpha\,z_0(x)^{m+1}}
\left(
1+O\!\left(\left(\frac{d_0}{R}\right)^m\right)
\right),
\qquad m\to\infty,
\]
uniformly for \(x\in K\). Applying the same estimate with \(m+1\) in place of \(m\), we get
\[
T_{m+1}^{(\alpha,\beta)}(x)
=
\frac{(m+1)!e^{\beta z_0(x)}}{\alpha\,z_0(x)^{m+2}}
\left(
1+O\!\left(\left(\frac{d_0}{R}\right)^{m+1}\right)
\right),
\]
uniformly on \(K\). Since \(d_0/R<1\), it follows that
\[
\frac{T_{m+1}^{(\alpha,\beta)}(x)}{T_m^{(\alpha,\beta)}(x)}
=
\frac{m+1}{z_0(x)}\bigl(1+o(1)\bigr),
\]
uniformly for \(x\in K\). Dividing by \(m\), we obtain
\[
\frac1m\,\frac{T_{m+1}^{(\alpha,\beta)}(x)}{T_m^{(\alpha,\beta)}(x)}
\longrightarrow
\frac1{z_0(x)}
=
\frac{\alpha}{-\log x},
\]
uniformly on \(K\). Since \(K\subset\mathbb D^\ast\) was arbitrary, the convergence is locally uniform on \(\mathbb D^\ast\).
This proves the claim.
\end{proof}

\begin{corollary}\label[corollary]{cor:A-B-ratio}
Let 
\(
\mathbb D:=\{x\in\mathbb C:\ |x|<1\},\qquad \mathbb D^\ast:=\mathbb D\setminus(-1,0].
\)
For every \(x\in\mathbb D^\ast\), let \(A_m(x)\) and \(B_m(x)\) denote the Eulerian polynomials of type \(A\) and type \(B\), respectively. Then
\[
\frac1m\,\frac{A_{m+1}(x)}{A_m(x)}
\longrightarrow
\frac{1-x}{-\log x},
\qquad
\frac1m\,\frac{B_{m+1}(x)}{B_m(x)}
\longrightarrow
\frac{2(1-x)}{-\log x},
\]
locally uniformly on \(\mathbb D^\ast\), where \(\log\) denotes the principal branch.
\end{corollary}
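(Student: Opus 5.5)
We reduce the corollary to \cref{lemma:affine-series-ratio} by writing each Eulerian polynomial as an explicit multiple of a series $T_m^{(\alpha,\beta)}$. For type $A$, the classical Carlitz identity gives
\[
\sum_{n\ge0}(n+1)^m x^n=\frac{A_m(x)}{(1-x)^{m+1}},\qquad |x|<1,
\]
where $A_m(x)=\sum_k \genfrac{\langle}{\rangle}{0pt}{}{m}{k}x^k$. This is the normalization in which $A_m$ has constant term $1$. Hence
\[
A_m(x)=(1-x)^{m+1}\,T_m^{(1,1)}(x).
\]
For type $B$, identity \eqref{eq:final} from the proof of \cref{lemma:wood_identity_for_type_B} gives, with $B_m(x)=\sum_\ell \EulerB{m}{\ell}x^\ell$,
\[
B_m(x)=(1-x)^{m+1}\,T_m^{(2,1)}(x).
\]
If the paper instead uses the shifted convention $\sum_{n\ge0} n^m x^n = xA_m(x)/(1-x)^{m+1}$, the same argument applies with $T_m^{(1,1)}$ replaced by an extra factor $x$. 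That factor cancels in the ratio, so this choice is harmless.

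Taking quotients, we get for $x\in\mathbb D^\ast$:
\[
\frac1m\frac{A_{m+1}(x)}{A_m(x)}=(1-x)\cdot\frac1m\frac{T_{m+1}^{(1,1)}(x)}{T_m^{(1,1)}(x)},\qquad
\frac1m\frac{B_{m+1}(x)}{B_m(x)}=(1-x)\cdot\frac1m\frac{T_{m+1}^{(2,1)}(x)}{T_m^{(2,1)}(x)}.
\]
Applying \cref{lemma:affine-series-ratio} with $(\alpha,\beta)=(1,1)$ and $(2,1)$, the right-hand factors converge locally uniformly on $\mathbb D^\ast$ to $1/(-\log x)$ and $2/(-\log x)$, respectively. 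The factor $(1-x)$ is bounded on compact sets, so multiplying by it preserves local uniform convergence. This yields both limits.

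The only point needing care is that the quotients are well defined, i.e.\ that $A_m$ and $B_m$ have no zeros in $\mathbb D^\ast$ for large $m$. We handle this through the asymptotic formula obtained in the proof of \cref{lemma:affine-series-ratio}:
\[
T_m^{(\alpha,\beta)}(x)=\frac{m!\,e^{\beta z_0(x)}}{\alpha z_0(x)^{m+1}}\bigl(1+o(1)\bigr)
\]
uniformly on compact $K\subset\mathbb D^\ast$. This shows that $T_m^{(\alpha,\beta)}$ is nonvanishing on $K$ for $m$ large. Since $(1-x)^{m+1}\ne0$ on $\mathbb D$, the polynomials $A_m$ and $B_m$ are then nonvanishing on $K$ for $m$ large, and the quotients make sense there. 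Alternatively, one can invoke the real-rootedness of Eulerian polynomials, whose zeros lie in $(-\infty,0]$ and hence outside $\mathbb D^\ast$. Beyond this, the argument is a direct substitution, and we do not anticipate any real obstacle.
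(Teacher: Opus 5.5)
Your proposal is correct and follows the paper's proof essentially verbatim: you express $A_m=(1-x)^{m+1}T_m^{(1,1)}$ and $B_m=(1-x)^{m+1}T_m^{(2,1)}$ via the classical series identities, reduce each ratio to $(1-x)\,T_{m+1}/T_m$, and apply \cref{lemma:affine-series-ratio} with $(\alpha,\beta)=(1,1)$ and $(2,1)$. The paper does not address whether the quotients are well defined, so your extra check is useful; of your two arguments for it, the real-rootedness one (all zeros of $A_m$ and $B_m$ are negative reals, hence outside $\mathbb D^\ast$) is the cleaner, since it works for every $m$ without relying on the uniform contour estimate.
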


\begin{proof}
It is classical that
\[
\sum_{k=0}^\infty (k+1)^m x^k
=
\frac{A_m(x)}{(1-x)^{m+1}},
\qquad |x|<1,
\]
\[
\sum_{k=0}^\infty (2k+1)^m x^k
=
\frac{B_m(x)}{(1-x)^{m+1}},
\qquad |x|<1.
\]
Thus, for \(x\in\mathbb D^\ast\),
\[
A_m(x)=(1-x)^{m+1}\sum_{k=0}^\infty (k+1)^m x^k,
\qquad
B_m(x)=(1-x)^{m+1}\sum_{k=0}^\infty (2k+1)^m x^k.
\]
Hence
\[
\frac{A_{m+1}(x)}{A_m(x)}
=
(1-x)\,
\frac{\displaystyle\sum_{k=0}^\infty (k+1)^{m+1}x^k}
     {\displaystyle\sum_{k=0}^\infty (k+1)^m x^k},
\]
\[
\frac{B_{m+1}(x)}{B_m(x)}
=
(1-x)\,
\frac{\displaystyle\sum_{k=0}^\infty (2k+1)^{m+1}x^k}
     {\displaystyle\sum_{k=0}^\infty (2k+1)^m x^k}.
\]

Applying \cref{lemma:affine-series-ratio} with \((\alpha,\beta)=(1,1)\) and \((\alpha,\beta)=(2,1)\), respectively, we obtain
\[
\frac1m\,\frac{A_{m+1}(x)}{A_m(x)}
\longrightarrow
(1-x)\frac{1}{-\log x}
=
\frac{1-x}{-\log x},
\]
\[
\frac1m\,\frac{B_{m+1}(x)}{B_m(x)}
\longrightarrow
(1-x)\frac{2}{-\log x}
=
\frac{2(1-x)}{-\log x}.
\]
The convergence is locally uniform on \(\mathbb D^\ast\) by the locally uniform convergence in \cref{lemma:affine-series-ratio}.
This proves the claim.
\end{proof}

\section{Convergence of Extremal zeros from endpoint ratios}
\begin{theorem}[Smallest zeros from endpoint ratios at a left endpoint $a$]\label[theorem]{theorem:bounding_smallest_root_a}
Let $(P_n)_{n\ge 1}$ be a sequence of real polynomials.
Fix a real number $a$. Assume that:
\begin{enumerate}
\item $\deg P_n\xrightarrow[n\to\infty]{}\+\infty$,
\item $P_n$ has only real zeros, counted with multiplicity, all contained in the open interval $(a,+\infty)$.
Denote them by: 
\(
a < r_{n,1} \leq r_{n,2} \leq \cdots \leq r_{n,\deg P_n}.
\)
\item Writing $\operatorname{lc}(P_n)$ for the leading coefficient of $P_n$, the \emph{normalized endpoint ratio} satisfies
\[
\left|\frac{P_n(a)}{\operatorname{lc}(P_n)}\right|^{1/\deg P_n}\xrightarrow[n\to\infty]{}0.
\]
\end{enumerate}
Then the smallest zero of $P_n$ converges to $a$; more precisely,
\(\displaystyle
\lim_{n\to\infty} r_{n,1}=a.
\)
Moreover, for every $n$ one has the quantitative bound:
\(\displaystyle
r_{n,1}-a\le \left|\frac{P_n(a)}{\operatorname{lc}(P_n)}\right|^{1/\deg P_n}.
\)
\end{theorem}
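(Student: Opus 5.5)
The plan is to factor $P_n$ over its real zeros and compare the size of $P_n(a)$ with that of the smallest distance $r_{n,1}-a$. Write $d_n:=\deg P_n$. By hypothesis (2) all zeros are real, so
\[
P_n(x)=\operatorname{lc}(P_n)\prod_{j=1}^{d_n}(x-r_{n,j}).
\]
Evaluating at $x=a$ gives
\[
\left|\frac{P_n(a)}{\operatorname{lc}(P_n)}\right|=\prod_{j=1}^{d_n}(r_{n,j}-a).
\]
Every factor is strictly positive because all zeros lie in $(a,+\infty)$. In particular $P_n(a)\neq0$, and the $d_n$-th root is well defined and positive. (We only need $d_n\ge1$, which holds for $n$ large by hypothesis (1); for the finitely many $n$ with $d_n=0$ the statement is vacuous, since there are no zeros.)

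The key step is the observation that, because the zeros are ordered, $r_{n,j}-a\ge r_{n,1}-a>0$ for every $j$. Hence
\[
\prod_{j=1}^{d_n}(r_{n,j}-a)\ \ge\ (r_{n,1}-a)^{d_n}.
\]
Taking $d_n$-th roots, which is monotone on positive reals, gives the quantitative bound
\[
0<r_{n,1}-a\le\left|\frac{P_n(a)}{\operatorname{lc}(P_n)}\right|^{1/d_n}.
\]
In words, the smallest distance to $a$ is at most the geometric mean of all the distances.

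The limit statement then follows by squeezing. The right-hand side tends to $0$ by hypothesis (3), and the left-hand side is positive, so $r_{n,1}\to a$.

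There is no real obstacle here. The only points needing care are that real-rootedness is used so that the factorization is over real linear factors with positive values at $a$, which makes the product equal to the absolute value. Hypothesis (1) is not actually needed for the inequality itself; it only ensures $d_n\ge1$, so that the root $1/\deg P_n$ and the smallest zero $r_{n,1}$ make sense.
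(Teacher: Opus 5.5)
Your proposal is correct and follows the paper's proof essentially step for step. Like the paper, you factor $P_n$ over its real zeros and write $\bigl|P_n(a)/\operatorname{lc}(P_n)\bigr|$ as the product of the positive distances $r_{n,j}-a$. You then bound this product below by $(r_{n,1}-a)^{\deg P_n}$, take $\deg P_n$-th roots, and conclude by hypothesis (3).
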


\begin{proof}
Since all zeros are real and lie in $(a,+\infty)$, we may factor
\[
P_n(x)=\operatorname{lc}(P_n)\prod_{j=1}^{\deg P_n}(x-r_{n,j}).
\]
Evaluating at $x=a$ gives
\[
\frac{P_n(a)}{\operatorname{lc}(P_n)}
=\prod_{j=1}^{\deg P_n}(a-r_{n,j})
=(-1)^{\deg P_n}\prod_{j=1}^{\deg P_n}(r_{n,j}-a).
\]
Taking absolute values,
\[
\left|\frac{P_n(a)}{\operatorname{lc}(P_n)}\right|
=\prod_{j=1}^{\deg P_n}(r_{n,j}-a).
\]
All factors are positive since \(a < r_{n,j} \quad \forall j \in \left\{1, 2, \cdots, \deg P_n\right\}\), hence
\[
\prod_{j=1}^{\deg P_n}(r_{n,j}-a)\ \ge\ \bigl(\min_{1\le j\le \deg P_n}(r_{n,j}-a)\bigr)^{\deg P_n}
=(r_{n,1}-a)^{\deg P_n}.
\]
Therefore,
\[
r_{n,1}-a
\le \left(\prod_{j=1}^{\deg P_n}(r_{n,j}-a)\right)^{1/\deg P_n}
=\left|\frac{P_n(a)}{\operatorname{lc}(P_n)}\right|^{1/\deg P_n}.
\]
By assumption the right-hand side tends to $0$, hence $r_{n,1}\to a$.
The displayed inequality also yields the quantitative bound.
\end{proof}

\begin{theorem}[Largest zeros from endpoint ratios]\label[theorem]{theorem:bounding_largest_root_a}
Let $(P_n)_{n\ge 1}$ be a sequence of real polynomials. Fix an arbitrary real number $a$.
Assume that:
\begin{enumerate}
\item $\deg P_n\xrightarrow[n\to\infty]{}\+\infty$
\item $P_n$ has only real zeros, counted with multiplicity, all contained in the open interval $(-\infty,a)$.
Denote them by:
\(
r_{n,1} \leq r_{n,2} \leq \cdots \leq r_{n,\deg P_n} < a.
\)
\item Writing $\operatorname{lc}(P_n)$ for the leading coefficient of $P_n$, the \emph{normalized endpoint ratio} satisfies
\[
\left|\frac{P_n(a)}{\operatorname{lc}(P_n)}\right|^{1/\deg P_n}\xrightarrow[n\to\infty]{}0.
\]
\end{enumerate}
Then the largest zero of $P_n$ converges to $a$; more precisely,
\(\displaystyle
\lim_{n\to\infty} r_{n,\deg P_n}=a.
\)
Moreover, for every $n$ one has the quantitative bound:
\(\displaystyle
a-r_{n,\deg P_n}\le \left|\frac{P_n(a)}{\operatorname{lc}(P_n)}\right|^{1/\deg P_n}.
\)
\end{theorem}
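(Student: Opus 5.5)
The plan is to mirror the proof of \cref{theorem:bounding_smallest_root_a} verbatim, with the roles of the endpoints reversed. Write $d_n:=\deg P_n$. Since all zeros of $P_n$ are real and counted with multiplicity, we have the full factorization
\[
P_n(x)=\operatorname{lc}(P_n)\prod_{j=1}^{d_n}(x-r_{n,j}).
\]
Evaluating at $x=a$ gives
\[
\frac{P_n(a)}{\operatorname{lc}(P_n)}=\prod_{j=1}^{d_n}(a-r_{n,j}).
\]
Here every factor $a-r_{n,j}$ is strictly positive by hypothesis 2, so no sign bookkeeping is needed. Taking absolute values is harmless, and the quantity equals $\prod_j (a-r_{n,j})$.

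Next, I use that the largest zero is the one closest to $a$. Concretely, $a-r_{n,j}\ge a-r_{n,d_n}>0$ for every $j$, which gives
\[
\prod_{j=1}^{d_n}(a-r_{n,j})\ge (a-r_{n,d_n})^{d_n}.
\]
Taking $d_n$-th roots, which is legitimate because $d_n\ge1$ for large $n$ and all quantities are positive, yields the quantitative bound
\[
0<a-r_{n,d_n}\le \left|\frac{P_n(a)}{\operatorname{lc}(P_n)}\right|^{1/d_n}.
\]
The statement asserts this bound for every $n$. It holds whenever $d_n\ge1$; if some $P_n$ were constant, the set of zeros is empty and there is nothing to bound, so I would add a one-line remark restricting to $d_n\ge1$, which is automatic eventually by hypothesis 1.

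Finally, hypothesis 3 sends the right-hand side to $0$, so the squeeze $0<a-r_{n,d_n}\le o(1)$ gives $r_{n,d_n}\to a$. Alternatively, one could deduce the whole theorem from \cref{theorem:bounding_smallest_root_a} by applying it to $Q_n(x):=P_n(-x)$ at the left endpoint $-a$. In that reduction the zeros of $Q_n$ are $-r_{n,j}\in(-a,\infty)$, $|\operatorname{lc}(Q_n)|=|\operatorname{lc}(P_n)|$, and $Q_n(-a)=P_n(a)$. There is no genuine obstacle in either route; the only point needing care is confirming that all factors $a-r_{n,j}$ are positive, so that the minimum-factor lower bound on the product is valid.
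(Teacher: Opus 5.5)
Your proposal is correct and follows the paper's proof essentially verbatim: factor $P_n$ over its real zeros, evaluate at $a$, bound the product of the positive factors $a-r_{n,j}$ below by $(a-r_{n,\deg P_n})^{\deg P_n}$, take roots, and let hypothesis 3 finish. Your side remarks about the degenerate case $\deg P_n=0$ and the alternative reduction to \cref{theorem:bounding_smallest_root_a} via $Q_n(x)=P_n(-x)$ are also correct.
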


\begin{proof}
Since all zeros lie in $(-\infty,a)$, the polynomial $P_n$ admits the factorization
\[
P_n(x)=\operatorname{lc}(P_n)\prod_{j=1}^{\deg P_n}(x-r_{n,j}).
\]

Evaluating at $x=a$ yields
\[
\frac{P_n(a)}{\operatorname{lc}(P_n)}
=\prod_{j=1}^{\deg P_n}(a-r_{n,j}).
\]
All factors are positive since \(r_{n,j} < a \quad \forall j \in \left\{1, 2, \cdots, \deg P_n\right\}\). Hence
\[
\prod_{j=1}^{\deg P_n}(a-r_{n,j}) \;\ge\; \bigl(\min_{1\le j\le \deg P_n}(a-r_{n,j})\bigr)^{\deg P_n}
= (a-r_{n,\deg P_n})^{\deg P_n},
\]
and therefore
\[
a-r_{n,\deg P_n}\le \left(\prod_{j=1}^{\deg P_n}(a-r_{n,j})\right)^{1/\deg P_n}
=\left|\frac{P_n(a)}{\operatorname{lc}(P_n)}\right|^{1/\deg P_n}.
\]
By assumption, the right-hand side tends to $0$, so $a-r_{n,\deg P_n}\to 0$, i.e. $r_{n,\deg P_n}\to a$. The displayed inequalities also prove the stated quantitative bounds. This completes the proof.
\end{proof}

\vspace{0.3cm}

\part{Existence and explicit formulae for the polynomials}\label[part]{part:existence_and_closed_forms}

\vspace{0.3cm}

\section{Existence, expansion form and formula of coefficients}

\begin{theorem}\label{theorem:explicit_polynomials}
Let $n\in\mathbb N$ and $t\in\mathbb N_0$. We adopt the standard convention that
\[
\binom{m}{\ell}=0 \qquad\text{whenever }\ell<0\text{ or }\ell>m.
\]
Set
\[
\mathcal C^{(B)}_{n,t}
:=
\sum_{k=0}^{n-1}\left\langle {2n-1 \atop k}\right\rangle^{B}(-1)^{k}
\sum_{i=0}^{k}(-1)^i\binom{k}{i}\binom{2n-2k-1}{2t-2i+1},
\qquad 0\le t\le n-1,
\]
\[
\mathcal C^{(A)}_{n,t}
:=
\sum_{k=0}^{n-1}\left\langle {2n \atop k}\right\rangle(-1)^{k}
\sum_{i=0}^{k}(-1)^i\binom{k}{i}\binom{2n-2k-1}{2t-2i+1},
\qquad 0\le t\le n-1.
\]
Consider the even polynomials $\Xi_n,\Lambda_n\in\mathbb Q[x]$ of degree at most $2n-2$ given by
\begin{equation}\label{eq:XiLambda-explicit}
\Xi_n(x)=\frac{(-1)^{n+1}}{2^{4n-2}(2n-1)!}\sum_{t=0}^{n-1}\mathcal C^{(B)}_{n,t}\,x^{2t},
\qquad
\Lambda_n(x)=\frac{2(-1)^{n+1}}{(2^{2n+1}-1)(2n)!}\sum_{t=0}^{n-1}\mathcal C^{(A)}_{n,t}\,x^{2t}.
\end{equation}
Then the following integral representations hold true
\begin{equation}\label{eq:beta-zeta-reps}
\frac{\beta(2n)}{\pi^{2n-1}}
=
\int_0^1 \frac{x\,\Xi_n(x)}{\sqrt{1-x^2}\,\operatorname{arctanh}x}\,dx,
\qquad
\frac{\zeta(2n+1)}{\pi^{2n}}
=
\int_0^1 \frac{x\,\Lambda_n(x)}{\operatorname{arctanh}x}\,dx.
\end{equation}
\end{theorem}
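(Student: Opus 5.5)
The plan is to derive both identities in \eqref{eq:beta-zeta-reps} from a Malmsten-type Mellin computation at $s=0$. Negative-order polylogarithms enter through \cref{lemma:wood_identity_for_type_B} and its classical type-$A$ analogue $\sum_{k\ge0}(k+1)^m x^k=A_m(x)/(1-x)^{m+1}$. The substitution $x=\tanh(\cdot)$ then turns the resulting rational functions of $e^{-u}$ into polynomials in $x$, and \cref{lemma:Enk} identifies their coefficients.

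\textbf{The $\beta$ case.} Put $z=e^{-u}$ and $m=2n-1$, and consider
\[
g_m(u)=\sum_{k\ge0}(-1)^k(2k+1)^m e^{-(2k+1)u}=\Im\operatorname{Li}_{-m}(ie^{-u}).
\]
By \cref{lemma:wood_identity_for_type_B},
\[
g_m(u)=\frac{z}{(1+z^2)^{2n}}\sum_{r}\EulerB{2n-1}{r}(-z^2)^r .
\]
The key steps are as follows.
\begin{enumerate}
\item For $\Re s$ large, $\int_0^\infty u^{s-1}g_m(u)\,du=\Gamma(s)\,\beta(s-m)$.
\item Continue analytically to $s=0$. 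The integral $\int_0^\infty g_m(u)\,du/u$ converges, because $g_m$ is odd in $u$ and hence $g_m(u)=O(u)$ near $0$. The pole of $\Gamma$ at $s=0$ is cancelled by $\beta(1-2n)=E_{2n-1}/2=0$. So the value is $\beta'(1-2n)$.
\item The functional equation of $\beta$ turns $\beta'(1-2n)$ into $\beta(2n)$ times an explicit rational multiple of $\pi^{1-2n}$, involving $(2n-1)!$ and powers of $2$. This should produce the prefactor $(-1)^{n+1}/(2^{4n-2}(2n-1)!)$.
\end{enumerate}

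Next I would substitute $x=\tanh u$, so that $u=\operatorname{arctanh}x$. This gives
\[
z=\sqrt{\tfrac{1-x}{1+x}},\qquad z/(1+z^2)=\tfrac12\sqrt{1-x^2}.
\]
It also gives $du=dx/(1-x^2)$ and $z^2=(1-x)/(1+x)$. Each term $z^{2r+1}/(1+z^2)^{2n}$ then becomes, up to a power of $2$ and the factor $1/\sqrt{1-x^2}$, the quantity $(1-x)^r(1+x)^{2n-1-r}$.

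Symmetrizing the $r$- and $(2n-1-r)$-contributions, i.e.\ using the palindromy of the type-$B$ Eulerian numbers, pairs them into the odd combinations
\[
(1-x^2)^k\bigl((1+x)^{2n-2k-1}-(1-x)^{2n-2k-1}\bigr)=E_{n,k}(x).
\]
\Cref{lemma:Enk} then rewrites $E_{n,k}(x)$ as $2x\sum_t(\cdots)x^{2t}$. Collecting coefficients gives exactly $x\,\Xi_n(x)$ with the coefficients $\mathcal C^{(B)}_{n,t}$, including the sign $(-1)^k$ coming from $(-z^2)^r$. Because of the factor $x$, the integrand is even, so restricting the $x$-range to $[0,1]$ only costs a factor of $2$.

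\textbf{The $\zeta$ case.} The same scheme applies with the type-$A$ identity and $m=2n$. Here I would use the alternating function $\sum_{k\ge1}(-1)^{k-1}k^{2n}e^{-ku}=-\operatorname{Li}_{-2n}(-e^{-u})$, whose Mellin transform is $\Gamma(s)\eta(s-2n)$. At $s=0$ one uses $\eta(-2n)=0$ and
\[
\eta'(-2n)=(1-2^{2n+1})\,\zeta'(-2n),
\]
and the functional equation gives $\zeta'(-2n)=(-1)^n(2n)!\,\zeta(2n+1)/(2(2\pi)^{2n})$. This is where the factor $1/(2^{2n+1}-1)$ and the prefactor $2(-1)^{n+1}/(2n)!$ should come from. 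With $x=\tanh(u/2)$, i.e.\ $z=(1-x)/(1+x)$, no square root appears, which matches the absence of $\sqrt{1-x^2}$ in the $\Lambda_n$ representation. Collecting the $\langle {2n\atop k}\rangle(-1)^k$ contributions into the same $E_{n,k}$ combinations then yields $\mathcal C^{(A)}_{n,t}$.

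\textbf{Main obstacle.} I expect the hardest step to be the exact bookkeeping in the change of variables: matching powers of $2$, signs, and the symmetrization that restricts the Eulerian sum to $k\le n-1$. The problem is most acute in the $\zeta$ case, where the degree-$2n$ type-$A$ Eulerian polynomial has an odd number of effective terms, so there is a central term to handle. I would first verify the constants numerically for $n=1$, where one needs $\int_0^1 x\,\Lambda_1/\operatorname{arctanh}x\,dx=\zeta(3)/\pi^2$, before carrying out the general collection of terms. Justifying the analytic continuation to $s=0$ is routine, since the integrand decays exponentially at $\infty$ and is $O(u)$ at $0$.
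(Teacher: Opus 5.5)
Your proof is correct. From the rational Eulerian forms onward it is the paper's argument. You pair $r$ with $2n-1-r$ via palindromicity to obtain $(-1)^kE_{n,k}$. Your substitution $x=\tanh u$ (resp.\ $x=\tanh(u/2)$) is the paper's M\"obius substitution $(1-x^2)/(1+x^2)$, written with $x=e^{-u}$ in the $\beta$ case and $x^2=e^{-u}$ in the $\zeta$ case. You then finish with \cref{lemma:Enk}, as the paper does.

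The genuine difference is the first step. The paper imports the Malmsten-type $\log\log$ evaluations \eqref{eq:start} from its earlier work and integrates by parts to reach the $1/\log$ integrals \eqref{eq:ibp}. You evaluate these $1/\log$ integrals directly as Mellin transforms at $s=0$, i.e.\ as $\beta'(1-2n)$ and $\eta'(-2n)=(1-2^{2n+1})\zeta'(-2n)$, and then apply the functional equations. Your route makes the proof self-contained and shows where the constants come from, at the price of the routine analytic continuation to $s=0$, which you handle correctly. The paper's route is shorter but rests entirely on the cited identities.

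Two remarks in your sketch need correcting, though neither damages the argument.

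First, no factor $2$ is lost anywhere. The maps $u\mapsto\tanh u$ and $u\mapsto\tanh(u/2)$ send $(0,\infty)$ onto $(0,1)$, so there is nothing to restrict, and the constants match exactly as they stand.
\begin{itemize}
\item In the $\beta$ case, $\beta'(1-2n)=(-1)^{n+1}2^{2n-1}(2n-1)!\,\beta(2n)/\pi^{2n-1}$. Combined with the factor $2^{-2n}$ from the substitution and the factor $2$ from \cref{lemma:Enk}, this gives precisely $(-1)^{n+1}/(2^{4n-2}(2n-1)!)$.
\item In the $\zeta$ case, $\eta'(-2n)=(-1)^{n+1}(2^{2n+1}-1)(2n)!\,\zeta(2n+1)/(2^{2n+1}\pi^{2n})$. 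Combined with the factor $2^{-(2n+1)}$ and the same $2$, this gives $2(-1)^{n+1}/((2^{2n+1}-1)(2n)!)$.
\end{itemize}
Inserting an extra $2$ from the parity of the integrand would therefore spoil the result.

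Second, there is no central term in the $\zeta$ case. $A_{2n}$ has the $2n$ coefficients $\left\langle{2n\atop k}\right\rangle$, $0\le k\le 2n-1$, and the involution $k\mapsto 2n-1-k$ has no fixed point. Hence the pairing into $(-1)^kE_{n,k}$, $0\le k\le n-1$, is exact, just as it is for $B_{2n-1}$.
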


\begin{proof}
We start from the two identities \cite{TallaWaffo2025arxiv2511.02843}
\begin{equation}\label{eq:start}
\begin{cases}
\displaystyle
\int_{0}^{1}
\frac{\Im\!\big(\operatorname{Li}_{-2n}(ix)\big)}{x}\,
\ln\ln\frac{1}{x}\,dx
=
(-1)^{n+1}
\frac{2^{2n-1}(2n-1)!}{\pi^{2n-1}}
\,\beta(2n),
\\[2.6em]
\displaystyle
\int_{0}^{1}
\frac{\operatorname{Li}_{-2n-1}(-x^{2})}{x}\,
\ln\ln\frac{1}{x}\,dx
=
(-1)^{n}
\Bigl(1-2^{-(2n+1)}\Bigr)
\frac{(2n)!}{2\pi^{2n}}
\,\zeta(2n+1).
\end{cases}
\end{equation}

We perform integration by parts, differentiating $\ln\ln(1/x)$. We use
\[
\frac{d}{dx}\,\operatorname{Li}_s(a x)=\frac{1}{x}\,\operatorname{Li}_{s-1}(a x)\qquad (a\neq 0),
\]
and, for $z=-x^2$,
\[
\frac{d}{dx}\,\operatorname{Li}_{-2n}(-x^2)=\frac{2}{x}\,\operatorname{Li}_{-2n-1}(-x^2).
\]
The boundary terms vanish. Hence
\begin{equation}\label{eq:ibp}
\begin{cases}
\displaystyle
\int_{0}^{1}
\frac{\Im\!\big(\operatorname{Li}_{-2n}(ix)\big)}{x}\,
\ln\ln\frac{1}{x}\,dx
=
-\displaystyle
\int_{0}^{1}
\frac{\Im\!\big(\operatorname{Li}_{-2n+1}(ix)\big)}{x\ln x}\,dx,
\\[2.0em]
\displaystyle
\int_{0}^{1}
\frac{\operatorname{Li}_{-2n-1}(-x^{2})}{x}\,
\ln\ln\frac{1}{x}\,dx
=
-\displaystyle
\frac{1}{2}
\int_{0}^{1}
\frac{\operatorname{Li}_{-2n}(-x^{2})}{x\ln x}\,dx.
\end{cases}
\end{equation}

Next we insert the rational forms of negative-index polylogarithms in terms of Eulerian numbers of type A and B \cite{Wood1992Polylog}, \cref{lemma:wood_identity_for_type_B}:
\begin{equation}\label{eq:polylog-eulerian}
\operatorname{Li}_{-m}(z)=\frac{1}{(1-z)^{m+1}}\sum_{r=0}^{m-1}\left\langle{m\atop r}\right\rangle z^{m-r},
\qquad
\frac{\Im(\operatorname{Li}_{-m}(iz))}{z}=\frac{1}{(1+z^2)^{m+1}}\sum_{r=0}^{m}\left\langle{m\atop r}\right\rangle^{B}(-z^2)^r.
\end{equation}
Applying \eqref{eq:polylog-eulerian} with $m=2n-1$ in the first integral and $m=2n$ in the second one gives
\[
\begin{cases}
\displaystyle
\int_{0}^{1}
\frac{\Im\!\big(\operatorname{Li}_{-2n}(ix)\big)}{x}\,
\ln\ln\frac{1}{x}\,dx
=
-\displaystyle
\int_{0}^{1}
\frac{1}{\ln x}\,
\frac{1}{(1+x^{2})^{2n}}
\sum_{r=0}^{2n-1}
\left\langle{2n-1\atop r}\right\rangle^{B}
(-x^{2})^{r}\,dx,
\\[2.4em]
\displaystyle
\int_{0}^{1}
\frac{\operatorname{Li}_{-2n-1}(-x^{2})}{x}\,
\ln\ln\frac{1}{x}\,dx
=
-\displaystyle
\frac{1}{2}
\int_{0}^{1}
\frac{1}{x\ln x}\,
\frac{1}{(1+x^{2})^{2n+1}}
\sum_{r=0}^{2n-1}
\left\langle{2n\atop r}\right\rangle
(-x^{2})^{2n-r}\,dx.
\end{cases}
\]

Split each finite sum at $r=n$, re-index and reverse the second half, and use the Eulerian symmetries \cite{FulmanLeeKimPetersen2021}
\[
\left\langle{m\atop r}\right\rangle=\left\langle{m\atop m-1-r}\right\rangle,
\qquad
\left\langle{m\atop r}\right\rangle^{B}=\left\langle{m\atop m-r}\right\rangle^{B}.
\]
Thus
\[
\begin{cases}
\displaystyle
\int_{0}^{1}
\frac{\Im\!\big(\operatorname{Li}_{-2n}(ix)\big)}{x}\,
\ln\ln\frac{1}{x}\,dx
=
-\displaystyle
\int_{0}^{1}
\frac{1}{\ln x}\,
\frac{1}{(1+x^{2})^{2n}}
\sum_{k=0}^{n-1}
\left\langle{2n-1\atop k}\right\rangle^{B}
\left\{(-x^{2})^{k}+(-x^{2})^{2n-1-k}\right\}\,dx,
\\[2.4em]
\displaystyle
\int_{0}^{1}
\frac{\operatorname{Li}_{-2n-1}(-x^{2})}{x}\,
\ln\ln\frac{1}{x}\,dx
=
-\displaystyle
\frac{1}{2}
\int_{0}^{1}
\frac{1}{x\ln x}\,
\frac{1}{(1+x^{2})^{2n+1}}
\sum_{k=0}^{n-1}
\left\langle{2n\atop k}\right\rangle
\left\{(-x^{2})^{2n-k}+(-x^{2})^{k+1}\right\}\,dx.
\end{cases}
\]

Bearing in mind that
\(\displaystyle
\operatorname{arctanh}x=\frac12\log\frac{1+x}{1-x},
\)
the substitution
\(\displaystyle
u=\frac{1-x^2}{1+x^2}
\) yields
\begin{equation}\label{eq:arctanh-stage}
\begin{cases}
\displaystyle
\int_{0}^{1}
\frac{\Im\!\big(\operatorname{Li}_{-2n}(ix)\big)}{x}\,
\ln\ln\frac{1}{x}\,dx
=
\displaystyle
\frac{1}{2^{2n}}
\sum_{k=0}^{n-1}
\left\langle {2n-1 \atop k}\right\rangle^{B}(-1)^k
\int_{0}^{1}
\frac{E_{n,k}(u)}{\operatorname{arctanh}u\,\sqrt{1-u^{2}}}\,du,
\\[2.6em]
\displaystyle
\int_{0}^{1}
\frac{\operatorname{Li}_{-2n-1}(-x^{2})}{x}\,
\ln\ln\frac{1}{x}\,dx
=
-\displaystyle
\frac{1}{2^{2n+2}}
\sum_{k=0}^{n-1}
\left\langle {2n \atop k}\right\rangle(-1)^k
\int_{0}^{1}
\frac{E_{n,k}(u)}{\operatorname{arctanh}u}\,du,
\end{cases}
\end{equation}
where
\[
E_{n,k}(u):=(1-u^2)^k\Big((1+u)^{2n-2k-1}-(1-u)^{2n-2k-1}\Big).
\]

By \cref{lemma:Enk}, for $0\le k\le n-1$,
\[
E_{n,k}(u)
=
2u\sum_{t=0}^{n-1}
\left(
\sum_{i=0}^{k}
(-1)^i
\binom{k}{i}
\binom{2n-2k-1}{2t-2i+1}
\right)
u^{2t}.
\]
Since the sums in \eqref{eq:arctanh-stage} are finite, we may reorder them freely and move summations outside the integral:
\begin{equation}\label{eq:series-stage}
\begin{cases}
\displaystyle
\int_{0}^{1}
\frac{\Im\!\big(\operatorname{Li}_{-2n}(ix)\big)}{x}\,
\ln\ln\frac{1}{x}\,dx
=
\displaystyle
\frac{1}{2^{2n}}
\sum_{t=0}^{n-1}
\mathcal C^{(B)}_{n,t}
\int_{0}^{1}
\frac{2u^{2t+1}}{\operatorname{arctanh}u\,\sqrt{1-u^{2}}}\,du,
\\[2.6em]
\displaystyle
\int_{0}^{1}
\frac{\operatorname{Li}_{-2n-1}(-x^{2})}{x}\,
\ln\ln\frac{1}{x}\,dx
=
-\displaystyle
\frac{1}{2^{2n+2}}
\sum_{t=0}^{n-1}
\mathcal C^{(A)}_{n,t}
\int_{0}^{1}
\frac{2u^{2t+1}}{\operatorname{arctanh}u}\,du.
\end{cases}
\end{equation}

Finally, define $\Xi_n$ and $\Lambda_n$ by \eqref{eq:XiLambda-explicit}. Then
\[
\int_0^1 \frac{u\,\Xi_n(u)}{\sqrt{1-u^2}\,\operatorname{arctanh}u}\,du
=
\frac{(-1)^{n+1}}{2^{4n-2}(2n-1)!}
\sum_{t=0}^{n-1}\mathcal C^{(B)}_{n,t}
\int_0^1 \frac{u^{2t+1}}{\sqrt{1-u^2}\,\operatorname{arctanh}u}\,du,
\]
\[
\int_0^1 \frac{u\,\Lambda_n(u)}{\operatorname{arctanh}u}\,du
=
\frac{2(-1)^{n+1}}{(2^{2n+1}-1)(2n)!}
\sum_{t=0}^{n-1}\mathcal C^{(A)}_{n,t}
\int_0^1 \frac{u^{2t+1}}{\operatorname{arctanh}u}\,du.
\]
Comparing with \eqref{eq:series-stage} and using \eqref{eq:start}, we recover exactly the integral representations \eqref{eq:beta-zeta-reps}. This completes the proof.
\end{proof}
For illustrative examples of these polynomials, see \cite{TallaWaffo2025arxiv2511.02843}.

\section{Expression depending on Eulerian polynomials}

\begin{proposition}\label[proposition]{prop:eulerian_polynomial_form}
Let $n\in\mathbb N$ and let $\Xi_n,\Lambda_n\in\mathbb Q[x]$ be the even polynomials of
\cref{theorem:explicit_polynomials}, given explicitly by \eqref{eq:XiLambda-explicit}.
Let $E_{n,k}$ be defined as in \cref{lemma:Enk}. Define the Eulerian polynomials of type~B and type~A by
\[
B_{n}(z):=\sum_{k=0}^{n}\left\langle{n\atop k}\right\rangle^{B}z^k,
\qquad
A_{n}(z):=\sum_{k=0}^{n-1}\left\langle{n\atop k}\right\rangle z^k.
\]
Then, for every $x\in\mathbb C\setminus\{0,-1\}$,
\begin{equation}\label{eq:XiLambda-eulerian-final}
\begin{cases}
\displaystyle
\Xi_n(x)
=
\frac{(-1)^{n+1}}{2^{4n-1}(2n-1)!}\,
\frac{(1+x)^{2n-1}}{x}\,
B_{2n-1}\!\left(-\,\dfrac{1-x}{1+x}\right),\\[1.2em]
\displaystyle
\Lambda_n(x)
=
\frac{(-1)^{n+1}}{(2^{2n+1}-1)(2n)!}\,
\frac{(1+x)^{2n-1}}{x}\,
A_{2n}\!\left(-\,\dfrac{1-x}{1+x}\right).
\end{cases}
\end{equation}
\end{proposition}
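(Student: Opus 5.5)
The plan is to start from the explicit coefficient formulae \eqref{eq:XiLambda-explicit} and read \cref{lemma:Enk} backwards: the inner double sum defining $\mathcal C^{(B)}_{n,t}$ and $\mathcal C^{(A)}_{n,t}$ is exactly the coefficient of $2x^{2t+1}$ in $E_{n,k}(x)$. Interchanging the finite sums over $t$ and $k$ therefore gives
\[
\sum_{t=0}^{n-1}\mathcal C^{(B)}_{n,t}x^{2t}=\frac{1}{2x}\sum_{k=0}^{n-1}\EulerB{2n-1}{k}(-1)^kE_{n,k}(x),
\qquad
\sum_{t=0}^{n-1}\mathcal C^{(A)}_{n,t}x^{2t}=\frac{1}{2x}\sum_{k=0}^{n-1}\left\langle{2n\atop k}\right\rangle(-1)^kE_{n,k}(x),
\]
valid for $x\neq0$.

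Next, for $x\neq -1$, factor $(1+x)^{2n-1}$ out of each $E_{n,k}(x)$. Put $y:=\frac{1-x}{1+x}$, so that $(1-x^2)^k=(1+x)^{2k}y^k$. Then
\[
E_{n,k}(x)=(1+x)^{2n-1}\bigl(y^k-y^{2n-1-k}\bigr).
\]
Hence $(-1)^kE_{n,k}(x)=(1+x)^{2n-1}\bigl((-y)^k+(-y)^{2n-1-k}\bigr)$, using $(-1)^{2n-1-k}=-(-1)^k$.

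For type $B$, sum against $\EulerB{2n-1}{k}$ for $k=0,\dots,n-1$. By the symmetry $\EulerB{m}{r}=\EulerB{m}{m-r}$ with $m=2n-1$, the terms $(-y)^{2n-1-k}$ reproduce the indices $n,\dots,2n-1$. The full sum is therefore $(1+x)^{2n-1}B_{2n-1}(-y)$, and there is no middle term because $2n-1$ is odd. Inserting this into \eqref{eq:XiLambda-explicit} turns the prefactor $2^{-(4n-2)}$ together with the $\frac1{2x}$ into $\frac{1}{2^{4n-1}x}$, which is the claimed formula for $\Xi_n$.

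For type $A$ with $m=2n$, the symmetry is $\langle{2n\atop k}\rangle=\langle{2n\atop 2n-1-k}\rangle$. The paired terms $(-y)^k$ and $(-y)^{2n-1-k}$ again cover the indices $0,\dots,2n-1$ exactly once, giving $(1+x)^{2n-1}A_{2n}(-y)$. The factor $2$ in $\Lambda_n$ then cancels the $\frac1{2x}$, producing $\frac{1}{(2^{2n+1}-1)(2n)!\,x}$ as stated.

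The main obstacle is bookkeeping rather than analysis. First, the sign identity $(-1)^k(y^k-y^{2n-1-k})=(-y)^k+(-y)^{2n-1-k}$ must be checked. Second, the split of the Eulerian sums must be exact: indices $0,\dots,n-1$ together with their mirrors should give the full range with no overlap or omission. I expect this to hold in both cases, since the degree $2n-1$ and the type-$A$ range $0,\dots,2n-1$ both have even length $2n$.

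Finally, the identity is first obtained for $x\notin\{0,-1\}$ real. Both sides are rational functions, so it extends to all of $\mathbb C\setminus\{0,-1\}$.
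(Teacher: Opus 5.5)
Your proof is correct and takes essentially the same route as the paper: you use \cref{lemma:Enk} to rewrite the coefficient sums as $\frac{1}{2x}$ times the Eulerian-weighted sum of $(-1)^kE_{n,k}(x)$, express $E_{n,k}$ through $y=\frac{1-x}{1+x}$ via $(-1)^k(y^k-y^{2n-1-k})=(-y)^k+(-y)^{2n-1-k}$, and complete the half-sums to $B_{2n-1}(-y)$ and $A_{2n}(-y)$ by palindromicity. The only difference is minor: you factor $E_{n,k}(x)=(1+x)^{2n-1}(y^k-y^{2n-1-k})$ directly, whereas the paper evaluates at $\frac{1-x}{1+x}$ and then inverts the involution, and your final rational-function extension step is unnecessary because your algebra already holds for every complex $x\neq 0,-1$.
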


\begin{proof}
We first rewrite $\Xi_n$ and $\Lambda_n$ in terms of the auxiliary functions
$E_{n,k}$, and then evaluate these at the Möbius-transformed argument
\[
y=\frac{1-x}{1+x}.
\]

By \cref{lemma:Enk} we have, for $0\le k\le n-1$,
\[
E_{n,k}(x)
=
2x\sum_{t=0}^{n-1}
\biggl(
\sum_{i=0}^{k}
(-1)^i\binom{k}{i}
\binom{2n-2k-1}{\,2t-2i+1\,}
\biggr)x^{2t}.
\]
Comparing with the definition of $\mathcal C^{(B)}_{n,t}$ and $\mathcal C^{(A)}_{n,t}$
in \eqref{eq:XiLambda-explicit}, and using that all sums are finite,
we may interchange the order of summation and obtain the alternative
expansion
\[
\begin{cases}
\displaystyle
\Xi_n(x)
=
\frac{(-1)^{n+1}}{2^{4n-1}(2n-1)!\,x}
\sum_{k=0}^{n-1}
\left\langle{2n-1\atop k}\right\rangle^{B}
(-1)^k\,E_{n,k}(x),\\[1.2em]
\displaystyle
\Lambda_n(x)
=
\frac{(-1)^{n+1}}{(2^{2n+1}-1)(2n)!\,x}
\sum_{k=0}^{n-1}
\left\langle{2n\atop k}\right\rangle
(-1)^k\,E_{n,k}(x).
\end{cases}
\]

Bearing in mind that $E_{n,k}(x):=(1-x^2)^k\Big((1+x)^{2n-2k-1}-(1-x)^{2n-2k-1}\Big)$, we prove this form
\[
E_{n,k}\!\left(\frac{1-x}{1+x}\right)
=
\frac{2^{2n-1}}{(1+x)^{2n-1}}\,
x^k\bigl(1-x^{2n-2k-1}\bigr).
\]

This gives
\[
\begin{cases}
\displaystyle
\Xi_n\!\left(\frac{1-x}{1+x}\right)
=
\frac{(-1)^{n+1}}{2^{2n}(2n-1)!\,(1-x)(1+x)^{2n-2}}
\sum_{k=0}^{n-1}
\left\langle{2n-1\atop k}\right\rangle^{B}
(-1)^k x^k\bigl(1-x^{2n-2k-1}\bigr),\\[1.2em]
\displaystyle
\Lambda_n\!\left(\frac{1-x}{1+x}\right)
=
\frac{(-1)^{n+1}2^{2n-1}}{(2^{2n+1}-1)(2n)!\,(1-x)(1+x)^{2n-2}}
\sum_{k=0}^{n-1}
\left\langle{2n\atop k}\right\rangle
(-1)^k x^k\bigl(1-x^{2n-2k-1}\bigr).
\end{cases}
\]
Using
\[
x^k\bigl(1-x^{2n-2k-1}\bigr)=x^k-x^{2n-1-k},
\]
we split each finite sum into two parts and then re-index the second half
via $j=2n-1-k$, exactly as in the proof of \cref{theorem:explicit_polynomials}.
Employing the Eulerian symmetries
\[
\left\langle{m\atop r}\right\rangle
=
\left\langle{m\atop m-1-r}\right\rangle,
\qquad
\left\langle{m\atop r}\right\rangle^{B}
=
\left\langle{m\atop m-r}\right\rangle^{B},
\]
we obtain the compact form
\[
\begin{cases}
\displaystyle
\Xi_n\!\left(\frac{1-x}{1+x}\right)
=
\frac{(-1)^{n+1}}{2^{2n}(2n-1)!\,(1-x)(1+x)^{2n-2}}
\sum_{k=0}^{2n-1}
\left\langle{2n-1\atop k}\right\rangle^{B}
(-1)^k x^{k},\\[1.2em]
\displaystyle
\Lambda_n\!\left(\frac{1-x}{1+x}\right)
=
\frac{(-1)^{n+1}2^{2n-1}}{(2^{2n+1}-1)(2n)!\,(1-x)(1+x)^{2n-2}}
\sum_{k=0}^{2n-1}
\left\langle{2n\atop k}\right\rangle
(-1)^k x^{k}.
\end{cases}
\]

By the definition of the Eulerian polynomials, this becomes
\[
\begin{cases}
\displaystyle
\Xi_n\!\left(\frac{1-x}{1+x}\right)
=
\frac{(-1)^{n+1}}{2^{2n}(2n-1)!\,(1-x)(1+x)^{2n-2}}\,
B_{2n-1}(-x),\\[1.2em]
\displaystyle
\Lambda_n\!\left(\frac{1-x}{1+x}\right)
=
\frac{(-1)^{n+1}2^{2n-1}}{(2^{2n+1}-1)(2n)!\,(1-x)(1+x)^{2n-2}}\,
A_{2n}(-x).
\end{cases}
\]

Substituting $x \to (1-x)/(1+x)$, we arrive at
\[
\begin{cases}
\displaystyle
\Xi_n(x)
=
\frac{(-1)^{n+1}}{2^{4n-1}(2n-1)!}\,
\frac{(1+x)^{2n-1}}{x}\,
B_{2n-1}\!\left(-\,\dfrac{1-x}{1+x}\right),\\[1.2em]
\displaystyle
\Lambda_n(x)
=
\frac{(-1)^{n+1}}{(2^{2n+1}-1)(2n)!}\,
\frac{(1+x)^{2n-1}}{x}\,
A_{2n}\!\left(-\,\dfrac{1-x}{1+x}\right).
\end{cases}
\]
This completes the proof.
\end{proof}

\section{Recurrence formulae for the polynomials}

\begin{proposition}
For \(n\ge 1\), the polynomials \(\Xi_n\) and \(\Lambda_n\) satisfy the recurrences
\[
\Xi_1(x)\equiv \frac14,
\qquad
\Lambda_1(x)\equiv \frac17,
\]
\[
\Xi_{n+1}(x)
=
-\frac{1}{8n(2n+1)}
\left[
(x^2-1)^2\,\Xi_n''(x)
+\frac{2(x^2-1)(3x^2-1)}{x}\,\Xi_n'(x)
+(6x^2-5)\,\Xi_n(x)
\right],
\]
\[
\Lambda_{n+1}(x)
=
-\frac{2^{2n+1}-1}{(2^{2n+3}-1)(2n+1)(2n+2)}
\left[
(x^2-1)^2\,\Lambda_n''(x)
+\frac{2(x^2-1)(4x^2-1)}{x}\,\Lambda_n'(x)
+4(3x^2-2)\,\Lambda_n(x)
\right].
\]
\end{proposition}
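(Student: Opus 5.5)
The plan is to derive both recurrences from the Eulerian-polynomial form in \cref{prop:eulerian_polynomial_form}. The idea is to pass to the generating series $\sum_k (2k+1)^m z^k$ and $\sum_k (k+1)^m z^k$, where raising the index $m$ is simply the first-order operator $2z\frac{d}{dz}+1$, respectively $z\frac{d}{dz}+1$.

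\textbf{Base cases.} First check $\Xi_1$ and $\Lambda_1$ directly. With $B_1(z)=1+z$ and $A_2(z)=1+z$, put $z=-\frac{1-x}{1+x}$, so that $1+z=\frac{2x}{1+x}$. Then
\[
\Xi_1=\frac{1}{2^3\cdot1!}\cdot\frac{1+x}{x}\cdot\frac{2x}{1+x}=\frac14,\qquad
\Lambda_1=\frac{1}{7\cdot2}\cdot 2=\frac17 .
\]

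\textbf{Reduction to the generating series.} For the recurrence I set $z=\frac{x-1}{x+1}$ and use the classical identities from the proof of \cref{cor:A-B-ratio}:
\[
F_m(z):=\sum_{k\ge0}(2k+1)^m z^k=\frac{B_m(z)}{(1-z)^{m+1}},\qquad
H_m(z):=\sum_{k\ge0}(k+1)^m z^k=\frac{A_m(z)}{(1-z)^{m+1}} .
\]
Since $1-z=\frac{2}{1+x}$, the formulas of \cref{prop:eulerian_polynomial_form} collapse to
\[
\Xi_n(x)=\frac{(-1)^{n+1}}{2^{2n-1}(2n-1)!}\,\frac{F_{2n-1}(z)}{x(1+x)},\qquad
\Lambda_n(x)=\frac{(-1)^{n+1}2^{2n+1}}{(2^{2n+1}-1)(2n)!}\,\frac{H_{2n}(z)}{x(1+x)^2}.
\]
These hold for $x\in(0,1)$, which is exactly where $|z|<1$.

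\textbf{Applying the index-raising operator.} Termwise differentiation gives $F_{m+1}=(2z\partial_z+1)F_m$ and $H_{m+1}=(z\partial_z+1)H_m$. Since $dz/dx=2/(1+x)^2$, we have $z\partial_z=\frac{x^2-1}{2}\partial_x$. Hence
\[
F_{2n+1}=\bigl((x^2-1)\partial_x+1\bigr)^2F_{2n-1},\qquad
H_{2n+2}=\bigl(\tfrac{x^2-1}{2}\partial_x+1\bigr)^2H_{2n}.
\]
I then substitute $F_{2n-1}=c_n\,x(1+x)\,\Xi_n$ and $H_{2n}=d_n\,x(1+x)^2\,\Lambda_n$, expand the squared operator with the product rule, and divide by $x(1+x)$ (respectively $x(1+x)^2$). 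This yields a second-order operator applied to $\Xi_n$ (respectively $\Lambda_n$). The leading term is $(x^2-1)^2\partial_x^2$ in the type-$B$ case, and $\tfrac14(x^2-1)^2\partial_x^2$ in the type-$A$ case, to be normalised by pulling out a factor $4$. The scalar prefactors come from the ratios $c_n/c_{n+1}$ and $d_n/d_{n+1}$:
\[
-\frac{1}{4(2n)(2n+1)}\quad\text{for }\Xi,\qquad
-\frac{2^{2n+1}-1}{(2^{2n+3}-1)(2n+1)(2n+2)}\quad\text{for }\Lambda,
\]
where in the type-$B$ case the operator normalisation supplies the remaining factor $\tfrac12$ needed to reach $\tfrac{1}{8n(2n+1)}$. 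Both sides of the resulting identity are rational functions of $x$, so once it holds on $(0,1)$ it holds identically.

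\textbf{Main obstacle.} The delicate step is conjugating the operator through $x(1+x)$ and $x(1+x)^2$. The raw coefficients will contain factors such as $(x-1)$, $(x+1)$ and non-even terms, and these must cancel to leave the displayed coefficients $\frac{2(x^2-1)(3x^2-1)}{x}$, $6x^2-5$, $\frac{2(x^2-1)(4x^2-1)}{x}$ and $4(3x^2-2)$. I will organise this computation by writing $\partial_x(uG)=u(\partial_x+u'/u)G$ for $u=x(1+x)$ and $u=x(1+x)^2$, so that $x(1+x)$ or $x(1+x)^2$ is conjugated through the operator explicitly. As a consistency check I will verify the result at $n=1$ against $\Xi_2$ and $\Lambda_2$ obtained from \eqref{eq:XiLambda-explicit}.
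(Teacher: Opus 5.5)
Your route is sound and, once the constants are tracked correctly (see below), yields exactly the stated recurrences. It rests on the same fact as the paper, namely the one-step Eulerian recurrence applied twice and transported through the Möbius map, but you organise the computation differently.

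\textbf{Comparison with the paper.} The paper works with the Eulerian polynomials themselves. It writes $B_{2n-1}$ and $A_{2n}$ as multiples of $L_n[\Xi_n]$ and $L_n[\Lambda_n]$, where $L_n[f](x)=(1+x)(1-x)^{2n-2}f\bigl(\frac{1+x}{1-x}\bigr)$. It then computes $D\circ L_n$ and $D^2\circ L_n$ explicitly, with $n$-dependent coefficients, inserts $B_m=((2m-1)z+1)B_{m-1}+2z(1-z)B_{m-1}'$ (and its type-$A$ analogue) twice, and finally undoes the Möbius map.

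You instead absorb $(1-z)^{m+1}$ into $F_m$ and $H_m$. There the recurrence becomes the $m$-independent operator $2z\partial_z+1$ (resp. $z\partial_z+1$), and $z\partial_z=\frac{x^2-1}{2}\partial_x$ makes the change of variables trivial. The proof then reduces to a single conjugation:
\begin{itemize}
\item With $u=x(1+x)$, $u^{-1}\bigl((x^2-1)\partial_x+1\bigr)u=(x^2-1)\partial_x+\frac{2x^2-1}{x}=:S$, and $S^2$ is exactly the bracket in the $\Xi$-recurrence.
\item With $v=x(1+x)^2$, $v^{-1}\bigl(\frac{x^2-1}{2}\partial_x+1\bigr)v=\frac{x^2-1}{2}\partial_x+\frac{3x^2-1}{2x}=:\widetilde S$, and $4\widetilde S^2$ is the bracket in the $\Lambda$-recurrence.
\end{itemize}
This is shorter than the paper's computation. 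It also explains why the bracketed operators do not depend on $n$: all $n$-dependence sits in the scalar constants. Finally, it exhibits the two operators as squares of first-order operators, which the paper's computation does not reveal.

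\textbf{Constant bookkeeping.} Your account of the constants needs correcting.
\begin{itemize}
\item Type $B$: $c_n/c_{n+1}=-\frac{1}{4\cdot 2n(2n+1)}$ is already $-\frac{1}{8n(2n+1)}$. The conjugated operator has leading coefficient exactly $(x^2-1)^2$, as you yourself note. So no extra factor $\frac12$ arises, and if one did, the resulting constant would be wrong.
\item Type $A$: the situation is reversed. In fact $d_n/d_{n+1}=-\frac{4(2^{2n+1}-1)}{(2^{2n+3}-1)(2n+1)(2n+2)}$, not the value you display. Its factor $4$ is what cancels the $\frac14$ in front of $(x^2-1)^2\partial_x^2$.
\end{itemize}
A minor point: $|z|<1$ holds for all $x>0$, not only on $(0,1)$. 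This is harmless, since any open interval suffices for the identity-of-rational-functions argument.
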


\begin{proof}
For an arbitrary function \(f\), define the operators \(D\) and \(L_n\) by
\[
D[f](x):=\frac{d}{dx}f(x),
\qquad
L_n[f](x):=(1+x)(1-x)^{2n-2}\,f\!\left(\frac{1+x}{1-x}\right).
\]
From \cref{prop:eulerian_polynomial_form}, we immediately deduce that
\begin{equation}\label{eq:AB_by_pol}
B_{2n-1}(x)
=
(-1)^{n+1}\,2^{2n}(2n-1)!\,L_n[\Xi_n](x),\quad
A_{2n}(x)
=
(-1)^{n+1}\,\frac{(2^{2n+1}-1)(2n)!}{2^{2n-1}}\,L_n[\Lambda_n](x).
\end{equation}

Differentiating once and twice with respect to \(x\), we obtain
\begin{equation}\label{eq:AB_by_pol_derivatives_fully_expanded}
\begin{aligned}
B_{2n-1}'(x)
&= (-1)^{n+1}2^{2n}(2n-1)!\,D\circ L_n[\Xi_n](x),
&
B_{2n-1}''(x)
&= (-1)^{n+1}2^{2n}(2n-1)!\,D^2\circ L_n[\Xi_n](x),
\\
A_{2n}'(x)
&= (-1)^{n+1}\dfrac{(2^{2n+1}-1)(2n)!}{2^{2n-1}}\,D\circ L_n[\Lambda_n](x),
&
A_{2n}''(x)
&= (-1)^{n+1}\dfrac{(2^{2n+1}-1)(2n)!}{2^{2n-1}}\,D^2\circ L_n[\Lambda_n](x).
\end{aligned}
\end{equation}

A direct computation yields
\[
D\circ L_n[f](x)
=
(1-x)^{2n-3}\bigl(3-2n-(2n-1)x\bigr)\,
f\!\left(\frac{1+x}{1-x}\right)
+2(1+x)(1-x)^{2n-4}\,
f'\!\left(\frac{1+x}{1-x}\right),
\]
\begin{align*}
D^2\circ L_n[f](x)
=
&\,2(n-1)\bigl((2n-1)x+2n-5\bigr)(1-x)^{2n-4}
f\!\left(\frac{1+x}{1-x}\right)
-\,8(nx+n-x-2)(1-x)^{2n-5}
f'\!\left(\frac{1+x}{1-x}\right)
\\
&+\,4(1+x)(1-x)^{2n-6}
f''\!\left(\frac{1+x}{1-x}\right).
\end{align*}

Again by \cref{prop:eulerian_polynomial_form}, after replacing \(n\) with \(n+1\), we obtain
\begin{equation}\label{eq:pol_by_AB_shifted}
\begin{cases}
\displaystyle
\Xi_{n+1}\!\left(\dfrac{1+x}{1-x}\right)
=
\dfrac{(-1)^n}{2^{2n+2}(2n+1)!\,(1+x)(1-x)^{2n}}\,
B_{2n+1}(x),\\[1.2em]
\displaystyle
\Lambda_{n+1}\!\left(\dfrac{1+x}{1-x}\right)
=
\dfrac{(-1)^n\,2^{2n+1}}{(2^{2n+3}-1)(2n+2)!\,(1+x)(1-x)^{2n}}\,
A_{2n+2}(x).
\end{cases}
\end{equation}

Applying the recurrence relations for the Eulerian polynomials twice, we may rewrite \(B_{2n+1}\) and \(A_{2n+2}\) solely in terms of \(B_{2n-1},B_{2n-1}',B_{2n-1}''\) and \(A_{2n},A_{2n}',A_{2n}''\). This gives
\begin{equation*}
\begin{cases}
\displaystyle
\Xi_{n+1}\!\left(\dfrac{1+x}{1-x}\right)
=
\dfrac{(-1)^n}{2^{2n+2}(2n+1)!\,(1+x)(1-x)^{2n}}
\Bigl[
\bigl(16n^2x^2-8nx^2+16nx+x^2-2x+1\bigr)B_{2n-1}(x)
\\[0.6em]\hspace{8em}
+\,8x(1-x)\bigl((2n-1)x+1\bigr)B_{2n-1}'(x)
+\,4x^2(1-x)^2B_{2n-1}''(x)
\Bigr],\\[1.6em]
\displaystyle
\Lambda_{n+1}\!\left(\dfrac{1+x}{1-x}\right)
=
\dfrac{(-1)^n\,2^{2n+1}}{(2^{2n+3}-1)(2n+2)!\,(1+x)(1-x)^{2n}}
\Bigl[
\bigl(4n^2x^2+(6n+1)x+1\bigr)A_{2n}(x)
\\[0.6em]\hspace{8em}
+\,x(1-x)\bigl((4n-1)x+3\bigr)A_{2n}'(x)
+\,x^2(1-x)^2A_{2n}''(x)
\Bigr].
\end{cases}
\end{equation*}

Substituting \eqref{eq:AB_by_pol} and \eqref{eq:AB_by_pol_derivatives_fully_expanded} into these identities and simplifying, we obtain
\begin{equation*}
\begin{cases}
\displaystyle
\Xi_{n+1}\!\left(\dfrac{1+x}{1-x}\right)
=
-\dfrac{x^2+22x+1}{8n(2n+1)(1-x)^2}\,
\Xi_n\!\left(\dfrac{1+x}{1-x}\right)
\\[1.2em]\hspace{3.2em}
-\dfrac{2x(x^2+4x+1)}{n(2n+1)(1+x)(1-x)^3}\,
\Xi_n'\!\left(\dfrac{1+x}{1-x}\right)
-\dfrac{2x^2}{n(2n+1)(1-x)^4}\,
\Xi_n''\!\left(\dfrac{1+x}{1-x}\right),
\\[1.8em]
\displaystyle
\Lambda_{n+1}\!\left(\dfrac{1+x}{1-x}\right)
=
-\dfrac{2(2^{2n+1}-1)(x^2+10x+1)}{(2^{2n+3}-1)(n+1)(2n+1)(1-x)^2}\,
\Lambda_n\!\left(\dfrac{1+x}{1-x}\right)
\\[1.2em]\hspace{3.2em}
-\dfrac{4(2^{2n+1}-1)x(x+3)(3x+1)}{(2^{2n+3}-1)(n+1)(2n+1)(1+x)(1-x)^3}\,
\Lambda_n'\!\left(\dfrac{1+x}{1-x}\right)
\\[1.2em]\hspace{3.2em}
-\dfrac{8(2^{2n+1}-1)x^2}{(2^{2n+3}-1)(n+1)(2n+1)(1-x)^4}\,
\Lambda_n''\!\left(\dfrac{1+x}{1-x}\right).
\end{cases}
\end{equation*}

Finally, apply the inverse Möbius transformation
\(
t=\dfrac{x-1}{x+1}.
\)
This yields
\begin{equation*}
\begin{cases}
\displaystyle
\Xi_{n+1}(t)
=
-\frac{1}{8n(2n+1)}
\left[
(t^2-1)^2\,\Xi_n''(t)
+\frac{2(t^2-1)(3t^2-1)}{t}\,\Xi_n'(t)
+(6t^2-5)\,\Xi_n(t)
\right],
\\[1.8em]
\displaystyle
\Lambda_{n+1}(t)
=
-\frac{2^{2n+1}-1}{(2^{2n+3}-1)(2n+1)(2n+2)}
\left[
(t^2-1)^2\,\Lambda_n''(t)
+\frac{2(t^2-1)(4t^2-1)}{t}\,\Lambda_n'(t)
+4(3t^2-2)\,\Lambda_n(t)
\right].
\end{cases}
\end{equation*}
Renaming \(t\) as \(x\), we obtain the desired recurrences.

To conclude, note that \(\Xi_n\) and \(\Lambda_n\) are even polynomials for all \(n\). Hence \(\Xi_n'\) and \(\Lambda_n'\) are odd polynomials, so
\[
\frac{\Xi_n'(x)}{x}
\qquad\text{and}\qquad
\frac{\Lambda_n'(x)}{x}
\]
are again polynomials. Therefore the right-hand sides above are indeed polynomials, as claimed.

This completes the proof.
\end{proof}

\vspace{0.4cm}

\part{Algebraic and analytic structure of the polynomials}\label[part]{part:algebraic_and_analytic_structure}

\section{Second coefficient}

\begin{proposition}\label{prop:Cn-n-2}
Let $\mathcal C^{(A)}_{n,t}, \mathcal C^{(B)}_{n,t}$ be defined as in \cref{theorem:explicit_polynomials}. For \(n\ge2\), one has
\[
\mathcal C^{(A)}_{n,n-2}
=
-\frac{n-1}{3}\,(2n)!,
\qquad
\mathcal C^{(B)}_{n,n-2}
=
-\frac{4n-3}{3}\,2^{2n-3}(2n-1)!.
\]
\end{proposition}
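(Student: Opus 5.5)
The plan is to avoid the double sum defining $\mathcal C^{(A)}_{n,n-2}$ and $\mathcal C^{(B)}_{n,n-2}$ and instead read off the coefficient of $x^{2n-4}$ from the Eulerian-polynomial form in \cref{prop:eulerian_polynomial_form}, by expanding around $x=\infty$. Comparing \eqref{eq:XiLambda-explicit} with \eqref{eq:XiLambda-eulerian-final} gives two polynomial identities:
\[
\sum_{t=0}^{n-1}\mathcal C^{(A)}_{n,t}x^{2t}=\tfrac12\,\frac{(1+x)^{2n-1}}{x}\,A_{2n}(y),
\qquad
\sum_{t=0}^{n-1}\mathcal C^{(B)}_{n,t}x^{2t}=\tfrac12\,\frac{(1+x)^{2n-1}}{x}\,B_{2n-1}(y),
\qquad y=\frac{x-1}{x+1}.
\]
So $\mathcal C_{n,n-2}$ is half the coefficient of $x^{2n-4}$ on the right-hand side.

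\textbf{Step 1: Taylor expansion at $y=1$.} Since $y=1-\frac{2}{1+x}$, I expand a polynomial $P$ (either $A_{2n}$ or $B_{2n-1}$) about $y=1$:
\[
\frac{(1+x)^{2n-1}}{x}P(y)=\sum_{j\ge0}\frac{(-2)^jP^{(j)}(1)}{j!}\,\frac{(1+x)^{2n-1-j}}{x}.
\]
The $j$-th term has top degree $2n-2-j$. Hence only $j=0,1,2$ contribute to the coefficient of $x^{2n-4}$. From $(1+x)^{r}/x$ the coefficient of $x^{2n-4}$ is $\binom{r}{2n-3}$, which gives
\[
[x^{2n-4}]=\binom{2n-1}{2}P(1)-2(2n-2)P'(1)+2P''(1).
\]
As a sanity check, the $j=0$ term alone gives $[x^{2n-2}]=P(1)$. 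This yields $\mathcal C^{(A)}_{n,n-1}=\tfrac12(2n)!$ and $\mathcal C^{(B)}_{n,n-1}=\tfrac12\,2^{2n-1}(2n-1)!$, which I will compare against the direct top-coefficient computation from \cref{lemma:Enk}. That direct computation gives the half-range sum $\sum_{k<n}\langle{m\atop k}\rangle$ and is itself consistent with the Eulerian symmetry.

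\textbf{Step 2: the values $P(1),P'(1),P''(1)$.} These are the first descent moments.
\begin{itemize}
\item Type $A$: $A_m(1)=m!$. Descents have mean $(m-1)/2$ and variance $(m+1)/12$, so $A_m'(1)=m!\frac{m-1}{2}$ and $A_m''(1)=m!\big(\tfrac{m+1}{12}+\tfrac{(m-1)^2}{4}-\tfrac{m-1}{2}\big)$.
\item Type $B$: I would not trust memory here. Instead I derive the values from $\sum_k(2k+1)^m x^k=B_m(x)/(1-x)^{m+1}$ (as in \cref{lemma:wood_identity_for_type_B}), or via Worpitzky. Concretely, I expand both sides near $x=1$ and match the three leading singular orders. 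This gives $B_m(1)=2^m m!$, $B_m'(1)=2^m m!\,\frac m2$ (by the symmetry $k\leftrightarrow m-k$), and $B_m''(1)$ from the next order, equivalently from the variance of the type-$B$ descent statistic.
\end{itemize}
The same singular-expansion method reconfirms the type-$A$ values.

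\textbf{Step 3: substitute and simplify.} For type $A$ (with $m=2n$), the combination becomes
\[
(2n)!\Big[\tbinom{2n-1}{2}-2(2n-2)\tfrac{2n-1}{2}+2\big(\tfrac{2n+1}{12}+\tfrac{(2n-1)^2}{4}-\tfrac{2n-1}{2}\big)\Big].
\]
This should simplify to $-\tfrac{2(n-1)}{3}(2n)!$, giving $\mathcal C^{(A)}_{n,n-2}=-\frac{n-1}{3}(2n)!$. The type-$B$ case with $m=2n-1$ is analogous, and its target is $-\frac{4n-3}{3}2^{2n-2}(2n-1)!$ before halving.

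\textbf{Cross-check and main obstacle.} As an independent check, I would also compute the coefficient directly from \cref{lemma:Enk}. Expanding $E_{n,k}$ at infinity gives
\[
\tfrac12[x^{2n-3}]E_{n,k}=(-1)^k\big((n-k-1)(2n-2k-1)-k\big),
\]
so $\mathcal C_{n,n-2}=\sum_{k<n}w_k\big((n-k-1)(2n-2k-1)-k\big)$, a quadratic-weighted half-range Eulerian sum. The main obstacle I expect is the type-$B$ second moment $B_{2n-1}''(1)$ and the exact bookkeeping of the binomial factors in Step 1. An off-by-one in $\binom{r}{2n-3}$, or in the variance, would spoil the closed form. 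I would check both closed forms numerically for $n=2,3$, for instance $\mathcal C^{(A)}_{2,0}=-8$, before finalizing.
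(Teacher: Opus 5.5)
Your proposal is correct, and it takes a different route from the paper.

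\textbf{The paper's route.} It stays inside \cref{lemma:Enk}. It reads off $[x^{2n-3}]E_{n,k}=2(-1)^k\bigl(\binom{2n-2k-1}{2}-k\bigr)$ from the two top terms of each factor; this is exactly your ``cross-check'' formula. That gives the half-range sum $\sum_{k<n}w_k\,q_n(k)$, with $w_k$ the relevant Eulerian numbers and $q_n(k)=2k^2-(4n-2)k+2n^2-3n+1$. Since $q_n(2n-1-k)=q_n(k)$, palindromicity turns this into half of a full-range sum. The paper then evaluates that sum with the mean and second moment quoted from Janson.

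\textbf{Your route.} You start from \cref{prop:eulerian_polynomial_form}. That is sound: the identity is just the pairing $w_k\bigl[(x-1)^k(1+x)^{2n-1-k}+(x-1)^{2n-1-k}(1+x)^k\bigr]=(-1)^k w_k E_{n,k}(x)$, so palindromicity is used there rather than at the end. You then expand at $x=\infty$ via $y-1=-2/(1+x)$. This lands directly on the full-range factorial moments $P(1),P'(1),P''(1)$ and needs no symmetric-weight trick. It also gives the leading coefficient for free. It extends mechanically to lower coefficients, since $\mathcal C_{n,n-1-j}$ involves only $P^{(i)}(1)$ with $i\le 2j$.

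\textbf{Checking your details.} Your bookkeeping is right: the binomials are $\binom{2n-1}{2}$, $2n-2$ and $1$, only $j\le 2$ contributes, and the type-$A$ bracket is $-\tfrac23(n-1)$. The piece you left open also works out. Matching singular orders in $\sum_k(2k+1)^m x^k=B_m(x)/(1-x)^{m+1}$ gives $B_m''(1)=2^m m!\,\tfrac{3m^2-5m+1}{12}$, i.e.\ variance $(m+1)/12$. This holds for $m\ge 2$ but fails at $m=1$, which is harmless here since $m=2n-1\ge 3$. Both families have mean $(2n-1)/2$ and their variances differ by $\tfrac1{12}$. So the type-$B$ bracket is the type-$A$ one minus $\tfrac16$, namely $-\tfrac{4n-3}{6}$, and halving gives the claim.

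\textbf{What each buys.} Your route is self-contained with respect to the descent-moment formulae and scales to deeper coefficients. The paper's route avoids \cref{prop:eulerian_polynomial_form} entirely, working only with \cref{lemma:Enk}, the definitions and the cited moments.
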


\begin{proof}
Fix \(k\in\{0,\dots,n-1\}\) and set \(M:=2n-2k-1\). By \cref{lemma:Enk},
\[
E_{n,k}(x)
=
2x\sum_{t=0}^{n-1}
\left(
\sum_{i=0}^{k}(-1)^i\binom{k}{i}\binom{M}{2t-2i+1}
\right)x^{2t}.
\]
Hence, for \(t=n-2\), the coefficient of \(x^{2n-3}\) in \(E_{n,k}(x)\) equals
\[
2\sum_{i=0}^{k}(-1)^i\binom{k}{i}\binom{M}{2n-3-2i}.
\]

On the other hand,
\[
E_{n,k}(x)=(1-x^2)^k\bigl((1+x)^M-(1-x)^M\bigr).
\]
Since \(M\) is odd, we have
\[
(1+x)^M-(1-x)^M
=
2x^M+2\binom{M}{2}x^{M-2}+\text{terms of degree }\le M-4,
\]
whereas
\[
(1-x^2)^k
=
(-1)^k x^{2k}+(-1)^{k-1}k\,x^{2k-2}
+\text{terms of degree }\le 2k-4.
\]
Because \(\deg E_{n,k}=2k+M=2n-1\), only these two displayed terms contribute to
the coefficient of \(x^{2n-3}=x^{2k+M-2}\). Therefore
\[
[x^{2n-3}]\,E_{n,k}(x)
=
2(-1)^k\binom{M}{2}+2(-1)^{k-1}k
=
2(-1)^k\left(\binom{M}{2}-k\right).
\]
Comparing coefficients yields
\[
\sum_{i=0}^{k}(-1)^i\binom{k}{i}\binom{2n-2k-1}{2n-3-2i}
=
(-1)^k\left(\binom{2n-2k-1}{2}-k\right).
\]

Substituting this into the definitions of \(\mathcal C^{(A)}_{n,t}\) and
\(\mathcal C^{(B)}_{n,t}\) with \(t=n-2\), the factors \((-1)^k\) cancel, and we obtain
\[
\mathcal C^{(A)}_{n,n-2}
=
\sum_{k=0}^{n-1}\left\langle{2n\atop k}\right\rangle
\left(\binom{2n-2k-1}{2}-k\right),
\]
\[
\mathcal C^{(B)}_{n,n-2}
=
\sum_{k=0}^{n-1}\left\langle{2n-1\atop k}\right\rangle^{B}
\left(\binom{2n-2k-1}{2}-k\right).
\]

Now define
\[
q_n(k):=\binom{2n-2k-1}{2}-k
=
2k^2-(4n-2)k+(2n^2-3n+1).
\]
A direct computation shows that \(q_n(2n-1-k)=q_n(k)\). Using the palindromicity
of the Eulerian numbers of type~\(A\) and type~\(B\), it follows that
\[
\mathcal C^{(A)}_{n,n-2}
=
\frac12\sum_{k=0}^{2n-1}\left\langle{2n\atop k}\right\rangle q_n(k),
\qquad
\mathcal C^{(B)}_{n,n-2}
=
\frac12\sum_{k=0}^{2n-1}\left\langle{2n-1\atop k}\right\rangle^{B} q_n(k).
\]

Using the mean and variance formulae for the Euler--Frobenius distribution from
\cite[Theorem~5.3]{Janson2013EulerFrobenius}, specialized to \(\rho=1\) and
\(\rho=\tfrac12\), we have
\[
\sum_{k=0}^{2n-1}\left\langle{2n\atop k}\right\rangle=(2n)!,
\qquad
\sum_{k=0}^{2n-1}k\left\langle{2n\atop k}\right\rangle=\frac{2n-1}{2}(2n)!,
\]
\[
\sum_{k=0}^{2n-1}k^2\left\langle{2n\atop k}\right\rangle
=
\left(n^2-\frac56n+\frac13\right)(2n)!,
\]
\[
\sum_{k=0}^{2n-1}\left\langle{2n-1\atop k}\right\rangle^{B}
=
2^{2n-1}(2n-1)!,
\qquad
\sum_{k=0}^{2n-1}k\left\langle{2n-1\atop k}\right\rangle^{B}
=
\frac{2n-1}{2}\,2^{2n-1}(2n-1)!,
\]
\[
\sum_{k=0}^{2n-1}k^2\left\langle{2n-1\atop k}\right\rangle^{B}
=
\left(n^2-\frac56n+\frac14\right)2^{2n-1}(2n-1)!.
\]

Inserting these identities into the above sums gives
\[
\mathcal C^{(A)}_{n,n-2}
=
-\frac{n-1}{3}\,(2n)!,
\qquad
\mathcal C^{(B)}_{n,n-2}
=
-\frac{4n-3}{3}\,2^{2n-3}(2n-1)!,
\]
as claimed.
\end{proof}

\section{Leading coefficients and exact degree}

\begin{proposition}\label[proposition]{prop:leading_coefficients}
Let $n\in\mathbb{N}$ and define $\mathcal{C}^{(B)}_{n,t}$ and $\mathcal{C}^{(A)}_{n,t}$ as in \cref{theorem:explicit_polynomials}. The leading coefficients of the even polynomials $\Xi_n,\Lambda_n\in\mathbb{Q}[x]$ satisfy
\[
[x^{2n-2}]\,\Xi_n(x)=\frac{(-1)^{n+1}}{2^{2n}},
\qquad
[x^{2n-2}]\,\Lambda_n(x)=\frac{(-1)^{n+1}}{2^{2n+1}-1}.
\]
Equivalently,
\[
\mathcal{C}^{(B)}_{n,n-1}=2^{2n-2}(2n-1)!,
\qquad
\mathcal{C}^{(A)}_{n,n-1}=\frac{(2n)!}{2}.
\]
\end{proposition}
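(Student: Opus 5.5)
I will follow the proof of \cref{prop:Cn-n-2}, which is the same argument one degree higher. Fix $k\in\{0,\dots,n-1\}$ and set $M:=2n-2k-1$. By \cref{lemma:Enk}, the coefficient of $x^{2n-1}$ in $E_{n,k}(x)$, which is the top degree, equals
\[
2\sum_{i=0}^{k}(-1)^i\binom{k}{i}\binom{M}{2n-1-2i}.
\]
On the other hand, $E_{n,k}(x)=(1-x^2)^k\bigl((1+x)^M-(1-x)^M\bigr)$. The leading term of $(1-x^2)^k$ is $(-1)^kx^{2k}$. Since $M$ is odd, the leading term of $(1+x)^M-(1-x)^M$ is $2x^M$. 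Their product gives
\[
[x^{2n-1}]E_{n,k}=2(-1)^k,
\qquad\text{hence}\qquad
\sum_{i}(-1)^i\binom{k}{i}\binom{M}{2n-1-2i}=(-1)^k.
\]

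Substituting this into the definitions with $t=n-1$, the signs $(-1)^k$ cancel. This leaves
\[
\mathcal C^{(A)}_{n,n-1}=\sum_{k=0}^{n-1}\left\langle{2n\atop k}\right\rangle,
\qquad
\mathcal C^{(B)}_{n,n-1}=\sum_{k=0}^{n-1}\EulerB{2n-1}{k}.
\]
Now I use palindromicity. For type $A$ the row $m=2n$ has entries $k=0,\dots,2n-1$, and $k\mapsto 2n-1-k$ is a fixed-point-free involution exchanging $\{0,\dots,n-1\}$ with $\{n,\dots,2n-1\}$. So the half-sum equals half of the total, $(2n)!/2$. For type $B$ the row $m=2n-1$ has entries $k=0,\dots,2n-1$ with symmetry $k\mapsto 2n-1-k$, again fixed-point free. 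So the half-sum is half of the total $2^{2n-1}(2n-1)!$, namely $2^{2n-2}(2n-1)!$. The totals are the row sums already quoted in the proof of \cref{prop:Cn-n-2}; they equal the number of permutations and of signed permutations respectively.

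Finally, I plug these into \eqref{eq:XiLambda-explicit}:
\[
[x^{2n-2}]\Xi_n=\frac{(-1)^{n+1}2^{2n-2}(2n-1)!}{2^{4n-2}(2n-1)!}=\frac{(-1)^{n+1}}{2^{2n}},
\qquad
[x^{2n-2}]\Lambda_n=\frac{2(-1)^{n+1}(2n)!/2}{(2^{2n+1}-1)(2n)!}=\frac{(-1)^{n+1}}{2^{2n+1}-1}.
\]
Both are nonzero, so the exact degree of each polynomial is $2n-2$.

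I expect no real obstacle. The only point needing care is checking that the symmetry index ranges pair $\{0,\dots,n-1\}$ exactly with its complement, with no middle term. This holds because both row lengths are even, namely $2n$ entries in each case.
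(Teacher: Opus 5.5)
Your proposal is correct and follows the paper's proof essentially step for step. You compare the top coefficient $[x^{2n-1}]E_{n,k}$, computed once from the leading terms and once from \cref{lemma:Enk}, to get the inner sum $(-1)^k$; you use palindromicity to halve the row sums $(2n)!$ and $2^{2n-1}(2n-1)!$; and you substitute into \eqref{eq:XiLambda-explicit}. Your explicit check that the involution $k\mapsto 2n-1-k$ has no fixed point, so that the half-sum is exactly half the total, is a detail the paper leaves implicit.
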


\begin{proof}
Fix $n\in\mathbb{N}$ and $k\in\mathbb{N}_0$ with $0\le k\le n-1$.
Recall from \cref{lemma:Enk} that
\[
E_{n,k}(x)=(1-x^2)^k\Bigl((1+x)^{2n-2k-1}-(1-x)^{2n-2k-1}\Bigr),
\]
and that $E_{n,k}$ is an odd polynomial of degree $2n-1$ admitting the representation
\[
E_{n,k}(x)
=
2x\sum_{t=0}^{n-1}\left(\sum_{i=0}^{k}(-1)^i\binom{k}{i}\binom{2n-2k-1}{2t-2i+1}\right)x^{2t}.
\]

On the one hand, the leading term of $(1-x^2)^k$ is $(-1)^k x^{2k}$, while the leading term of
$(1+x)^{2n-2k-1}-(1-x)^{2n-2k-1}$ is $2x^{2n-2k-1}$. Hence
\[
[x^{2n-1}]\,E_{n,k}(x)=2(-1)^k.
\]

On the other hand, in the above series representation the monomial $x^{2n-1}$ arises only from the index
$t=n-1$, so that
\[
[x^{2n-1}]\,E_{n,k}(x)
=
2\sum_{i=0}^{k}(-1)^i\binom{k}{i}\binom{2n-2k-1}{2n-2i-1}.
\]
Comparing the two expressions yields the identity
\begin{equation}\label{eq:inner_sum_top_degree}
\sum_{i=0}^{k}(-1)^i\binom{k}{i}\binom{2n-2k-1}{2n-2i-1}=(-1)^k.
\end{equation}

Now set $t=n-1$ in the definitions of $\mathcal{C}^{(B)}_{n,t}$ and $\mathcal{C}^{(A)}_{n,t}$ from
\cref{theorem:explicit_polynomials} and apply \eqref{eq:inner_sum_top_degree}. This gives
\[
\mathcal{C}^{(B)}_{n,n-1}
=
\sum_{k=0}^{n-1}\left\langle\begin{matrix}2n-1\\ k\end{matrix}\right\rangle^{B}
(-1)^k\cdot(-1)^k
=
\sum_{k=0}^{n-1}\left\langle\begin{matrix}2n-1\\ k\end{matrix}\right\rangle^{B},
\]
and similarly
\[
\mathcal{C}^{(A)}_{n,n-1}
=
\sum_{k=0}^{n-1}\left\langle\begin{matrix}2n\\ k\end{matrix}\right\rangle.
\]

Using the palindromicity (symmetry) of Eulerian numbers (type $A$ and type $B$) \cite{FulmanLeeKimPetersen2021},
\[
\left\langle\begin{matrix}m\\ k\end{matrix}\right\rangle
=
\left\langle\begin{matrix}m\\ m-1-k\end{matrix}\right\rangle,
\qquad
\left\langle\begin{matrix}m\\ k\end{matrix}\right\rangle^{B}
=
\left\langle\begin{matrix}m\\ m-k\end{matrix}\right\rangle^{B},
\]
the partial sums up to the midpoint equal one half of the total sums. Since
\[
\sum_{k=0}^{n-1}\left\langle\begin{matrix}n\\ k\end{matrix}\right\rangle=n!,
\qquad
\sum_{k=0}^{n}\left\langle\begin{matrix}n\\ k\end{matrix}\right\rangle^{B}=2^{n}n!,
\]
we obtain
\[
\mathcal{C}^{(A)}_{n,n-1}=\frac{(2n)!}{2},
\qquad
\mathcal{C}^{(B)}_{n,n-1}=2^{2n-2}(2n-1)!.
\]

Finally, taking the coefficient of $x^{2n-2}$ in the explicit expressions for $\Xi_n$ and $\Lambda_n$
from \cref{theorem:explicit_polynomials} yields
\[
[x^{2n-2}]\,\Xi_n(x)=\frac{(-1)^{n+1}}{2^{4n-2}(2n-1)!}\,\mathcal{C}^{(B)}_{n,n-1}
=\frac{(-1)^{n+1}}{2^{2n}},
\]
\[
[x^{2n-2}]\,\Lambda_n(x)=\frac{2(-1)^{n+1}}{(2^{2n+1}-1)(2n)!}\,\mathcal{C}^{(A)}_{n,n-1}
=\frac{(-1)^{n+1}}{2^{2n+1}-1},
\]
as claimed.
\end{proof}

\begin{corollary}\label[corollary]{cor:exact_degree}
For every $n\in\mathbb{N}$, the polynomials $\Xi_n$ and $\Lambda_n$ from \cref{theorem:explicit_polynomials}
have degree exactly $2n-2$.
\end{corollary}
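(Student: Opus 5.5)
The corollary follows almost immediately from \cref{prop:leading_coefficients} combined with the explicit form \eqref{eq:XiLambda-explicit} of \cref{theorem:explicit_polynomials}. By construction, $\Xi_n$ and $\Lambda_n$ are even polynomials of the form $c_n\sum_{t=0}^{n-1}\mathcal C_{n,t}\,x^{2t}$ with a nonzero rational prefactor $c_n$. Hence no monomial of degree exceeding $2n-2$ occurs, and $\deg\Xi_n,\deg\Lambda_n\le 2n-2$.

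To obtain equality, I would show that the coefficient of $x^{2n-2}$ is nonzero. \Cref{prop:leading_coefficients} gives
\[
[x^{2n-2}]\,\Xi_n(x)=\frac{(-1)^{n+1}}{2^{2n}},\qquad [x^{2n-2}]\,\Lambda_n(x)=\frac{(-1)^{n+1}}{2^{2n+1}-1}.
\]
Both values are nonzero for every $n\in\mathbb N$: the numerators are $\pm1$ and the denominators are finite positive integers, since $2^{2n+1}-1\ge 7$. This proves that the degree is exactly $2n-2$. Equivalently, one can argue via $\mathcal C^{(B)}_{n,n-1}=2^{2n-2}(2n-1)!>0$ and $\mathcal C^{(A)}_{n,n-1}=(2n)!/2>0$, each multiplied by a nonzero prefactor.

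The only point that needs a brief comment is the case $n=1$. There the claimed degree is $0$, so "exact degree $0$" simply means the polynomial is a nonzero constant. This holds because $\Xi_1\equiv\tfrac14$ and $\Lambda_1\equiv\tfrac17$, which agrees with the leading-coefficient formulas at $n=1$. I do not expect any real obstacle; the substantive work is already contained in \cref{prop:leading_coefficients}.
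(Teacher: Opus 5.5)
Your proof is correct and follows the paper's own argument: the explicit form in \cref{theorem:explicit_polynomials} gives degree at most $2n-2$, and \cref{prop:leading_coefficients} shows that the coefficient of $x^{2n-2}$ is nonzero. Your separate check of $n=1$, where the polynomials are the nonzero constants $\tfrac14$ and $\tfrac17$, is harmless and consistent with that argument; the paper leaves this case implicit.
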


\begin{proof}
By \cref{theorem:explicit_polynomials}, both $\Xi_n$ and $\Lambda_n$ are even polynomials of degree at most $2n-2$.
Moreover, by \cref{prop:leading_coefficients} their leading coefficients are
\[
[x^{2n-2}]\,\Xi_n(x)=\frac{(-1)^{n+1}}{2^{2n}}\neq 0,
\qquad
[x^{2n-2}]\,\Lambda_n(x)=\frac{(-1)^{n+1}}{2^{2n+1}-1}\neq 0.
\]
Hence neither polynomial can have degree smaller than $2n-2$, and therefore
$\deg(\Xi_n)=\deg(\Lambda_n)=2n-2$.
\end{proof}

\section{Particular values}

\begin{proposition}[Values at $x=0$]\label{prop:XiLambda-at-zero}
Let $n \in \mathbb{N}$. Then
\begin{equation}
\Xi_n(0)=\frac{(-1)^{n}}{2^{2n}(2n-1)!}\,E_{2n},
\qquad
\Lambda_n(0)=\frac{(-1)^{n}\,2^{2n+1}\bigl(2^{2n+2}-1\bigr)}{(2^{2n+1}-1)(n+1)(2n)!}\,B_{2n+2},
\label{eq:values_at_zero}
\end{equation}

where $E_{2n}$ are the Euler (secant) numbers and $B_{2n+2}$ are the Bernoulli numbers.
\end{proposition}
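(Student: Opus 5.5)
The plan is to start from the closed form of \cref{prop:eulerian_polynomial_form} and let $x\to 0$. As $x\to0$ the argument $-\frac{1-x}{1+x}$ tends to $-1$. Write $y(x):=-\frac{1-x}{1+x}$, so that $y(0)=-1$ and $y'(0)=2$. The first step is to show $B_{2n-1}(-1)=0$ and $A_{2n}(-1)=0$. This follows from palindromicity: $B_m(y)=y^mB_m(1/y)$ and $A_m(y)=y^{m-1}A_m(1/y)$. At $y=-1$ these read $B_{2n-1}(-1)=-B_{2n-1}(-1)$ and $A_{2n}(-1)=-A_{2n}(-1)$, so both vanish. Since $\Xi_n,\Lambda_n$ are polynomials, $\Xi_n(0)=\lim_{x\to0}$ of the right-hand side, and by L'Hôpital (or simply the derivative at $0$ of the numerator),
\[
\Xi_n(0)=\frac{(-1)^{n+1}}{2^{4n-1}(2n-1)!}\cdot 2\,B_{2n-1}'(-1),\qquad
\Lambda_n(0)=\frac{(-1)^{n+1}}{(2^{2n+1}-1)(2n)!}\cdot 2\,A_{2n}'(-1).
\]
So everything reduces to computing the derivatives of the Eulerian polynomials at $-1$.

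For these I would use the identities recalled in the proof of \cref{cor:A-B-ratio}:
\[
B_m(y)=(1-y)^{m+1}F_m(y),\quad F_m(y)=\sum_{k\ge0}(2k+1)^my^k,
\]
\[
A_m(y)=(1-y)^{m+1}G_m(y),\quad G_m(y)=\sum_{k\ge0}(k+1)^my^k.
\]
Here $F_m$ and $G_m$ are rational functions whose only pole is at $y=1$. Their values and derivatives at $y=-1$ are therefore the limits as $y\to-1^+$, i.e. Abel sums. Because $F_m(-1)=G_m(-1)=0$ for the relevant parities, we get $B_{2n-1}'(-1)=2^{2n}F_{2n-1}'(-1)$ and $A_{2n}'(-1)=2^{2n+1}G_{2n}'(-1)$.

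Next I compute these derivatives by Abel summation. Writing $k=\tfrac12\bigl((2k+1)-1\bigr)$ gives
\[
F'_{2n-1}(-1)=-\tfrac12\sum_k(-1)^k\bigl((2k+1)^{2n}-(2k+1)^{2n-1}\bigr)=-\tfrac12\sum_k(-1)^k(2k+1)^{2n},
\]
where the last step uses that the odd-power Abel sum vanishes. Similarly, $k=(k+1)-1$ gives
\[
G_{2n}'(-1)=\sum_k(-1)^{k+1}(k+1)^{2n+1}=-\eta(-2n-1),
\]
again using that the even-power sum $\eta(-2n)$ vanishes.

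The main step is to evaluate these Abel sums. For $\sum_{k\ge0}(-1)^k(2k+1)^{2n}$, I substitute $y=-e^{-t}$-type weights, i.e. I use
\[
\sum_{k\ge0}(-1)^ke^{-(2k+1)t}=\frac{1}{2\cosh t}.
\]
The coefficient of $t^{2n}/(2n)!$ then gives $\tfrac12E_{2n}$. This is legitimate because $t\mapsto F$ composed with $y=-e^{-2t}$, times $e^{-t}$, is analytic at $t=0$. For the second sum I use
\[
\sum_k(-1)^k(k+1)^{m}e^{-(k+1)t}\leftrightarrow \frac{1}{e^t+1},
\]
which yields the standard value $\eta(-2n-1)=\frac{(2^{2n+2}-1)B_{2n+2}}{2n+2}$.

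Finally I insert these values. This gives $B'_{2n-1}(-1)=-2^{2n-2}E_{2n}$, hence $\Xi_n(0)=\frac{(-1)^nE_{2n}}{2^{2n}(2n-1)!}$. It also gives $A'_{2n}(-1)=-2^{2n+1}\frac{(2^{2n+2}-1)B_{2n+2}}{2n+2}$, which after collecting constants should produce the stated formula for $\Lambda_n(0)$. The main obstacle I expect is careful sign and power-of-two bookkeeping in the $\Lambda_n$ case, together with justifying the Abel-sum and generating-function interchanges. I would sanity-check the result against $n=1$, where $\Xi_1\equiv\frac14$, $\Lambda_1\equiv\frac17$, $E_2=-1$ and $B_4=-\frac1{30}$.
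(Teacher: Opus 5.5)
Your proof is correct, and it takes a genuinely different route from the paper. The identities $yF_m'(y)=\tfrac12\bigl(F_{m+1}(y)-F_m(y)\bigr)$ and $yG_m'(y)=G_{m+1}(y)-G_m(y)$ hold as identities of rational functions, so your ``Abel'' manipulations are exact. They give $B_{2n-1}'(-1)=-2^{2n-2}E_{2n}$ and $A_{2n}'(-1)=-2^{2n+1}(2^{2n+2}-1)B_{2n+2}/(2n+2)$, and inserting these into your L'H\^opital formulas produces exactly the stated values.

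The paper never looks at the point $-1$. It sets $x=0$ in the $u$-generating functions of \cref{prop:generating_functions_Xi_Lambdahat}, recognises $\frac{\sin(u/2)}{1+\cos u}=\frac{d}{du}\sec(u/2)$ and $\frac{1-\cos 2u}{1+\cos 2u}=\tan^2u$, and reads off coefficients from the classical expansions of $\sec$ and $\tan$. That is shorter, and those generating functions are needed anyway for the values at $x=1$ and for the weighted integrals.

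Your argument uses only \cref{prop:eulerian_polynomial_form} together with the classical Abel-summed values $\sum_{k\ge0}(-1)^k(2k+1)^{2n}=\tfrac12E_{2n}$ (that is, $\beta(-2n)$) and $\eta(-2n-1)$. It also shows structurally where Euler and Bernoulli numbers come from: $x=0$ corresponds to the palindromic centre $y=-1$ of the Eulerian polynomials. It yields more as well. By palindromicity, $B_{2n-1}'(-1)=\sum_{k=0}^{n-1}\EulerB{2n-1}{k}(-1)^k(2n-2k-1)$, and likewise for $A_{2n}'(-1)$. So you in effect prove the corollary that follows the proposition first and deduce the proposition from it, reversing the paper's order of deduction.

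To make the final step airtight, state it as the identity $e^{-t}F_m(-e^{-2t})=(-1)^m\frac{d^m}{dt^m}\frac{1}{2\cosh t}$. This holds for $t>0$ and extends to $t=0$ because both sides are analytic there. The analogous identity is $e^{-t}G_m(-e^{-t})=(-1)^m\frac{d^m}{dt^m}\frac{1}{e^t+1}$. Watch the sign: for $m=2n+1$ the factor $(-1)^m$ turns the Taylor coefficient $\frac{(1-2^{2n+2})B_{2n+2}}{2n+2}$ of $1/(e^t+1)$ into $\eta(-2n-1)$.
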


\begin{proof}
By \cref{prop:generating_functions_Xi_Lambdahat}, we have
\[
\sum_{n\ge1}\Xi_n(x)\,u^{2n-1}
=
\frac{\sin(u/2)}{1+x^2-(x^2-1)\cos u},
\qquad
\sum_{n\ge1}\bigl(2^{2n+1}-1\bigr)\Lambda_n(x)\,u^{2n}
=
\frac{1-\cos(2u)}{1+x^2-(x^2-1)\cos(2u)}.
\]

Setting $\displaystyle x=0$, the first identity becomes $\displaystyle \sum_{n\ge1}\Xi_n(0)\,u^{2n-1}=\frac{\sin(u/2)}{1+\cos u}$. Since $\displaystyle 1+\cos u=2\cos^2(u/2)$, this simplifies to $\displaystyle \sum_{n\ge1}\Xi_n(0)\,u^{2n-1}=\frac12\tan(u/2)\sec(u/2)$. Using $\displaystyle \frac{d}{du}\sec(u/2)=\frac12\tan(u/2)\sec(u/2)$, we obtain
\[
\sum_{n\ge1}\Xi_n(0)\,u^{2n-1}=\frac{d}{du}\sec(u/2).
\]
Now recall the classical expansion $\displaystyle \sec z=\sum_{m\ge0}\frac{(-1)^mE_{2m}}{(2m)!}\,z^{2m}$. Substituting $\displaystyle z=u/2$ and differentiating termwise gives
\[
\frac{d}{du}\sec(u/2)
=
\sum_{m\ge1}\frac{(-1)^mE_{2m}}{2^{2m}(2m-1)!}\,u^{2m-1}.
\]
Comparing coefficients of $\displaystyle u^{2n-1}$ yields $\displaystyle \Xi_n(0)=\frac{(-1)^n}{2^{2n}(2n-1)!}\,E_{2n}$.

Next, setting $\displaystyle x=0$ in the second generating function gives
\[
\sum_{n\ge1}\bigl(2^{2n+1}-1\bigr)\Lambda_n(0)\,u^{2n}
=
\frac{1-\cos(2u)}{1+\cos(2u)}
=
\tan^2u.
\]
We now use the standard Bernoulli expansion $\displaystyle \tan u=\sum_{m\ge1}\frac{(-1)^{m-1}2^{2m}\bigl(2^{2m}-1\bigr)}{(2m)!}\,B_{2m}\,u^{2m-1}$. Differentiating termwise and using $\displaystyle \frac{d}{du}\tan u=\sec^2u=1+\tan^2u$, we find
\[
1+\tan^2u
=
\sum_{m\ge1}\frac{(-1)^{m-1}2^{2m}\bigl(2^{2m}-1\bigr)(2m-1)}{(2m)!}\,B_{2m}\,u^{2m-2}.
\]
The term with $\displaystyle m=1$ equals $\displaystyle 1$, hence
\[
\tan^2u
=
\sum_{m\ge2}\frac{(-1)^{m-1}2^{2m}\bigl(2^{2m}-1\bigr)(2m-1)}{(2m)!}\,B_{2m}\,u^{2m-2}.
\]
Replacing $\displaystyle m$ by $\displaystyle n+1$, we obtain
\[
\tan^2u
=
\sum_{n\ge1}\frac{(-1)^n2^{2n+2}\bigl(2^{2n+2}-1\bigr)(2n+1)}{(2n+2)!}\,B_{2n+2}\,u^{2n}.
\]
Since $\displaystyle \frac{2n+1}{(2n+2)!}=\frac{1}{(2n+2)(2n)!}$, this becomes
\begin{equation}
\tan^2u
=
\sum_{n\ge1}\frac{(-1)^n2^{2n+2}\bigl(2^{2n+2}-1\bigr)}{(2n+2)(2n)!}\,B_{2n+2}\,u^{2n}.
\label{eq:series_of_tan_square}
\end{equation}
Comparing coefficients of $\displaystyle u^{2n}$ gives $\displaystyle \bigl(2^{2n+1}-1\bigr)\Lambda_n(0)=\frac{(-1)^n2^{2n+2}\bigl(2^{2n+2}-1\bigr)}{(2n+2)(2n)!}\,B_{2n+2}$, and therefore
\[
\Lambda_n(0)=
\frac{(-1)^n\,2^{2n+2}\bigl(2^{2n+2}-1\bigr)}{(2^{2n+1}-1)(2n+2)(2n)!}\,B_{2n+2}.
\]
This proves the claim.
\end{proof}

\begin{corollary}
Let \(n\in \mathbb{N}\).
\[
\begin{cases}
\displaystyle\sum_{k=0}^{n-1}\Big\langle {2n-1 \atop k}\Big\rangle^{B}(-1)^k(2n-2k-1) = -2^{\,2n-2}\,E_{2n},
\\
\hspace{0.1cm}\\
\displaystyle\sum_{k=0}^{n-1}\Big\langle {2n \atop k}\Big\rangle(-1)^k(2n-2k-1) = -\,\dfrac{2^{\,2n+1}\bigl(2^{2n+2}-1\bigr)}{2n+2}\,B_{2n+2}.
\end{cases}\]
\end{corollary}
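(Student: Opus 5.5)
The plan is to compare two expressions for the constant terms $\Xi_n(0)$ and $\Lambda_n(0)$. The first comes from the coefficient formula \eqref{eq:XiLambda-explicit} of \cref{theorem:explicit_polynomials}; the second is the closed form in \cref{prop:XiLambda-at-zero}.

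\textbf{Step 1: the coefficients $\mathcal C_{n,0}$.} Setting $t=0$ in \eqref{eq:XiLambda-explicit} gives
\[
\Xi_n(0)=\frac{(-1)^{n+1}}{2^{4n-2}(2n-1)!}\,\mathcal C^{(B)}_{n,0},
\qquad
\Lambda_n(0)=\frac{2(-1)^{n+1}}{(2^{2n+1}-1)(2n)!}\,\mathcal C^{(A)}_{n,0}.
\]
In the inner sum of $\mathcal C_{n,0}$, the lower binomial index is $1-2i$. This is negative for $i\ge1$, so only $i=0$ survives, and the inner sum equals $\binom{2n-2k-1}{1}=2n-2k-1$. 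Equivalently, $[x^1]E_{n,k}(x)=2(2n-2k-1)$, which can be read off directly from the definition of $E_{n,k}$ in \cref{lemma:Enk}. Therefore
\[
\mathcal C^{(B)}_{n,0}=\sum_{k=0}^{n-1}\Big\langle {2n-1 \atop k}\Big\rangle^{B}(-1)^k(2n-2k-1),
\]
and $\mathcal C^{(A)}_{n,0}$ is the analogous sum with type-$A$ Eulerian numbers. These are exactly the left-hand sides of the corollary.

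\textbf{Step 2: compare with \cref{prop:XiLambda-at-zero}.} For the type-$B$ sum,
\[
\frac{(-1)^{n+1}}{2^{4n-2}(2n-1)!}\,\mathcal C^{(B)}_{n,0}=\frac{(-1)^{n}}{2^{2n}(2n-1)!}\,E_{2n}.
\]
Cancelling $(2n-1)!$ and the signs leaves $\mathcal C^{(B)}_{n,0}=-2^{2n-2}E_{2n}$. For the type-$A$ sum, use the form of $\Lambda_n(0)$ obtained at the end of the proof of \cref{prop:XiLambda-at-zero}, namely
\[
\Lambda_n(0)=\frac{(-1)^n\,2^{2n+2}(2^{2n+2}-1)}{(2^{2n+1}-1)(2n+2)(2n)!}\,B_{2n+2}.
\]
Equating this with the expression from Step 1, the factors $(2^{2n+1}-1)(2n)!$ cancel, and dividing by $2(-1)^{n+1}$ gives
\[
\mathcal C^{(A)}_{n,0}=-\frac{2^{2n+1}(2^{2n+2}-1)}{2n+2}\,B_{2n+2},
\]
which is the claim.

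The only delicate point is bookkeeping. One must check that the $i\ge1$ terms really vanish under the stated binomial convention, and track the powers of $2$ and the signs carefully. The two displayed forms of $\Lambda_n(0)$ in \cref{prop:XiLambda-at-zero} agree, since $2^{2n+2}/(2n+2)=2^{2n+1}/(n+1)$, so either can be used.
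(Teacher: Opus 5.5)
Your proposal is correct and follows the paper's own proof. The paper also sets $t=0$ in \eqref{eq:XiLambda-explicit}, observes that only the $i=0$ term survives so that $\mathcal C_{n,0}$ reduces to the stated sums, and equates the result with the closed forms for $\Xi_n(0)$ and $\Lambda_n(0)$ from \cref{prop:XiLambda-at-zero}; your sign and power-of-$2$ bookkeeping checks out.
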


\begin{proof}
Evaluating \eqref{eq:XiLambda-explicit} at $x=0$ leaves only the term $t=0$, hence
\begin{equation}
\Xi_n(0)=\frac{(-1)^{n+1}}{2^{4n-2}(2n-1)!}\,\mathcal C^{(B)}_{n,0},
\qquad
\Lambda_n(0)=\frac{2(-1)^{n+1}}{(2^{2n+1}-1)(2n)!}\,\mathcal C^{(A)}_{n,0}.
\label{eq:raw_formula}
\end{equation}
By definition, for $t=0$ we have
\[
\mathcal C^{(B)}_{n,0}
=
\sum_{k=0}^{n-1}\left\langle {2n-1 \atop k}\right\rangle^{B}(-1)^{k}
\sum_{i=0}^{k}(-1)^i\binom{k}{i}\binom{2n-2k-1}{\,1-2i\,},
\]
\[
\mathcal C^{(A)}_{n,0}
=
\sum_{k=0}^{n-1}\left\langle {2n \atop k}\right\rangle(-1)^{k}
\sum_{i=0}^{k}(-1)^i\binom{k}{i}\binom{2n-2k-1}{\,1-2i\,}.
\]
Since $\displaystyle\binom{2n-2k-1}{\,1-2i\,}=0$ for all $i\ge 1$ (negative lower index), only the term $i=0$ contributes, and thus
\[
\sum_{i=0}^{k}(-1)^i\binom{k}{i}\binom{2n-2k-1}{\,1-2i\,}
=\binom{2n-2k-1}{1}=2n-2k-1.
\]
Therefore,
\[
\mathcal C^{(B)}_{n,0}
=
\sum_{k=0}^{n-1}\left\langle {2n-1 \atop k}\right\rangle^{B}(-1)^{k}\,(2n-2k-1),
\qquad
\mathcal C^{(A)}_{n,0}
=
\sum_{k=0}^{n-1}\left\langle {2n \atop k}\right\rangle(-1)^{k}\,(2n-2k-1).
\]

Replacing these equalities and \eqref{eq:values_at_zero} in \eqref{eq:raw_formula} and performing some algebra yields
\[
\begin{cases}
\displaystyle\sum_{k=0}^{n-1}\Big\langle {2n-1 \atop k}\Big\rangle^{B}(-1)^k(2n-2k-1) = -2^{\,2n-2}\,E_{2n},
\\
\hspace{0.1cm}\\
\displaystyle\sum_{k=0}^{n-1}\Big\langle {2n \atop k}\Big\rangle(-1)^k(2n-2k-1) = -\,\dfrac{2^{\,2n+1}\bigl(2^{2n+2}-1\bigr)}{2n+2}\,B_{2n+2}.
\end{cases}\]
This proves the claim.
\end{proof}

\begin{proposition}[Values at $x=1$]\label{prop:XiLambda-at-one}
Let $n \in \mathbb{N}$. Then
\[
\Xi_n(1)=\frac{(-1)^{n-1}}{2^{2n}(2n-1)!},
\qquad
\Lambda_n(1)=\frac{(-1)^{n+1}\,2^{2n-1}}{(2^{2n+1}-1)(2n)!}.
\]
\end{proposition}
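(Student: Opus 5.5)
The plan is to evaluate the closed forms of \cref{prop:eulerian_polynomial_form} at the point $x=1$. This point is admissible, since the identity \eqref{eq:XiLambda-eulerian-final} holds for every $x\in\mathbb C\setminus\{0,-1\}$. At $x=1$ the Möbius argument becomes
\[
-\frac{1-x}{1+x}\Big|_{x=1}=0,
\]
and the prefactor becomes
\[
\frac{(1+x)^{2n-1}}{x}\Big|_{x=1}=2^{2n-1}.
\]
So both formulas reduce to the constant terms of the Eulerian polynomials, namely $B_{2n-1}(0)$ and $A_{2n}(0)$.

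Next I would identify these constant terms. From the definitions in \cref{prop:eulerian_polynomial_form}, we have $B_{2n-1}(0)=\left\langle{2n-1\atop 0}\right\rangle^{B}$ and $A_{2n}(0)=\left\langle{2n\atop 0}\right\rangle$. Both equal $1$: only the identity (signed) permutation has no descents, with the standard conventions for the type $B$ descent statistic.

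As a consistency check, the expansions \eqref{eq:polylog-eulerian} and \eqref{eq:final} confirm this value. For type $B$, the constant term of $\sum_{k\ge0}(2k+1)^m x^k$ is $1$, and it equals $B_m(0)/(1-0)^{m+1}$, so $B_m(0)=1$. For type $A$, the analogous classical series $\sum_{k\ge0}(k+1)^m x^k=A_m(x)/(1-x)^{m+1}$ gives $A_m(0)=1$.

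Substituting these values yields
\[
\Xi_n(1)=\frac{(-1)^{n+1}\,2^{2n-1}}{2^{4n-1}(2n-1)!}=\frac{(-1)^{n-1}}{2^{2n}(2n-1)!},
\qquad
\Lambda_n(1)=\frac{(-1)^{n+1}\,2^{2n-1}}{(2^{2n+1}-1)(2n)!},
\]
which is exactly the claim.

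The only point needing care is the normalization of the Eulerian numbers at index $0$, in particular the type $B$ convention. I would settle this through the generating-function identity above rather than through combinatorics, since that identity is what the paper's formulas actually rely on. An independent cross-check is to set $x=1$ in the generating function of \cref{prop:generating_functions_Xi_Lambdahat}, where the denominators become constant:
\[
\sum_{n\ge1}\Xi_n(1)\,u^{2n-1}=\frac{\sin(u/2)}{2},
\qquad
\sum_{n\ge1}\bigl(2^{2n+1}-1\bigr)\Lambda_n(1)\,u^{2n}=\frac{1-\cos(2u)}{2}.
\]
Comparing coefficients reproduces the same values.
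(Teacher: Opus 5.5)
Your argument is correct, but it follows a different route from the paper's. The paper sets $x=1$ in the generating functions of \cref{prop:generating_functions_Xi_Lambdahat}, obtains $\sin(u/2)/2$ and $(1-\cos 2u)/2$, and compares coefficients; you use exactly this only as a cross-check. Your main argument instead evaluates the closed forms of \cref{prop:eulerian_polynomial_form} at $x=1$. There the M\"obius argument $-(1-x)/(1+x)$ vanishes, so only the constant terms $B_{2n-1}(0)=A_{2n}(0)=1$ survive, and the prefactor arithmetic $2^{2n-1}/2^{4n-1}=2^{-2n}$ settles both cases. Your route has strictly fewer dependencies. The generating-function proposition is itself derived from \cref{prop:eulerian_polynomial_form}, plus the exponential generating functions of the Eulerian polynomials and a nontrivial simplification, and you bypass all of that. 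There is also no circularity, since \cref{prop:eulerian_polynomial_form} never uses values at $1$. What the paper's approach buys is uniformity with \cref{prop:XiLambda-at-zero}. At $x=0$ the Eulerian closed form cannot be evaluated directly: the prefactor $(1+x)^{2n-1}/x$ has a pole there, while $B_{2n-1}(-1)=A_{2n}(-1)=0$ by palindromicity in odd degree. So at $x=0$ the generating function is the natural tool, and the paper simply reuses it at $x=1$.
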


\begin{proof}
By \cref{prop:generating_functions_Xi_Lambdahat}, we have
\[
\sum_{n\ge1}\Xi_n(x)\,u^{2n-1}
=
\frac{\sin(u/2)}{1+x^2-(x^2-1)\cos u},
\qquad
\sum_{n\ge1}\bigl(2^{2n+1}-1\bigr)\Lambda_n(x)\,u^{2n}
=
\frac{1-\cos(2u)}{1+x^2-(x^2-1)\cos(2u)}.
\]

Setting $\displaystyle x=1$, the first identity becomes $\displaystyle \sum_{n\ge1}\Xi_n(1)\,u^{2n-1}=\frac{\sin(u/2)}{2}$. Using the Taylor expansion $\displaystyle \sin(u/2)=\sum_{m\ge0}\frac{(-1)^m}{(2m+1)!}\left(\frac{u}{2}\right)^{2m+1}$, we obtain $\displaystyle \frac{\sin(u/2)}{2}=\sum_{m\ge0}\frac{(-1)^m}{2^{2m+2}(2m+1)!}\,u^{2m+1}$. Replacing $\displaystyle m$ by $\displaystyle n-1$, this becomes
\[
\frac{\sin(u/2)}{2}
=
\sum_{n\ge1}\frac{(-1)^{n-1}}{2^{2n}(2n-1)!}\,u^{2n-1}.
\]
Comparing coefficients of $\displaystyle u^{2n-1}$ yields $\displaystyle \Xi_n(1)=\frac{(-1)^{n-1}}{2^{2n}(2n-1)!}$.

Next, setting $\displaystyle x=1$ in the second generating function gives $\displaystyle \sum_{n\ge1}\bigl(2^{2n+1}-1\bigr)\Lambda_n(1)\,u^{2n}=\frac{1-\cos(2u)}{2}$. Using the Taylor expansion $\displaystyle \cos(2u)=\sum_{m\ge0}\frac{(-1)^m(2u)^{2m}}{(2m)!}$, we get $\displaystyle \frac{1-\cos(2u)}{2}=\sum_{m\ge1}\frac{(-1)^{m+1}2^{2m-1}}{(2m)!}\,u^{2m}$. Renaming $\displaystyle m$ as $\displaystyle n$, we obtain
\[
\frac{1-\cos(2u)}{2}
=
\sum_{n\ge1}\frac{(-1)^{n+1}2^{2n-1}}{(2n)!}\,u^{2n}.
\]
Comparing coefficients of $\displaystyle u^{2n}$ gives $\displaystyle \bigl(2^{2n+1}-1\bigr)\Lambda_n(1)=\frac{(-1)^{n+1}2^{2n-1}}{(2n)!}$, and hence
\[
\Lambda_n(1)=\frac{(-1)^{n+1}\,2^{2n-1}}{(2^{2n+1}-1)(2n)!}.
\]
This proves the claim.
\end{proof}

\section{Real-rootedness, constant sign, log-concavity, unimodality and uniform convergence}

\begin{theorem}\label{theorem:real-rootedness}
Let \(n\in\mathbb N\) with \(n\ge 2\). All zeros of \(\Lambda_n(x)\) and \(\Xi_n(x)\) lie in \((-1,1)\setminus\{0\}\). Moreover, they are real and simple, and the zeros of \(\Lambda_n(x)\) interlace those of \(\Lambda_{n+1}(x)\), while the zeros of \(\Xi_n(x)\) interlace those of \(\Xi_{n+1}(x)\).
\end{theorem}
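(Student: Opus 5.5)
The plan is to transfer everything to the Eulerian polynomials through the Möbius representation of \cref{prop:eulerian_polynomial_form}. Put $y=\frac{1-x}{1+x}$, so that
\[
\Xi_n(x)=c_n\,\frac{(1+x)^{2n-1}}{x}\,B_{2n-1}(-y),\qquad
\Lambda_n(x)=c_n'\,\frac{(1+x)^{2n-1}}{x}\,A_{2n}(-y),
\]
with $c_n,c_n'\neq0$. Two classical facts drive the argument. First, $A_m$ and $B_m$ have only real, simple, negative zeros (Frobenius for type~A, Brenti for type~B). Second, the zeros of $A_m$ interlace those of $A_{m+1}$, and similarly for $B$; this follows from the recurrences $A_{m+1}(z)=(1+mz)A_m(z)+z(1-z)A_m'(z)$ and $B_{m+1}(z)=(1+(2m+1)z)B_m(z)+2z(1-z)B_m'(z)$ by the standard sign-alternation argument.

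\textbf{Locating the zeros.} The map $x\mapsto -y=\frac{x-1}{x+1}$ is an increasing bijection from $(-1,\infty)$ onto $(-\infty,1)$. It sends $(0,1)$ onto $(-1,0)$ and $(1,\infty)$ onto $(0,1)$.

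Each Eulerian polynomial is palindromic: $A_{2n}$ has odd degree $2n-1$ and $B_{2n-1}$ has degree $2n-1$. Hence $z=-1$ is a simple zero of each. The remaining $2n-2$ negative zeros come in reciprocal pairs $\{z,1/z\}$ with $z\in(-1,0)$, giving $n-1$ zeros in $(-1,0)$.

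Under the inverse map, each zero in $(-1,0)$ gives one zero of $\Xi_n$ (resp. $\Lambda_n$) in $(0,1)$. The zero at $z=-1$ corresponds to $x=0$. There it cancels the factor $1/x$, because $\frac{x-1}{x+1}+1=\frac{2x}{1+x}$. So $\Xi_n(0)\neq0$, which is consistent with \cref{prop:XiLambda-at-zero} since $E_{2n}\neq0$ and $B_{2n+2}\neq0$.

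This yields $n-1$ distinct positive zeros in $(0,1)$. By evenness we get $2n-2$ distinct real zeros in $(-1,1)\setminus\{0\}$, which by \cref{cor:exact_degree} is the full degree. Therefore all zeros are real, simple, and lie in the required set. The reciprocal partner zeros in $(-\infty,-1)$ correspond to $x\in(-1,0)$; they are just the negatives, as evenness predicts, so there is no double counting.

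\textbf{Interlacing.} Let $0<\xi_1<\dots<\xi_{n-1}<1$ be the positive zeros of $\Xi_n$ and $0<\eta_1<\dots<\eta_n<1$ those of $\Xi_{n+1}$. Interlacing should mean $\eta_1<\xi_1<\eta_2<\dots<\xi_{n-1}<\eta_n$, extended to negative zeros by symmetry; at $0$ there is no conflict, since $\Xi_{n+1}$ has the pair $\pm\eta_1$ straddling $0$.

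Because the map is monotone, it suffices to show that the zeros of $B_{2n-1}$ in $(-1,0)$ interlace those of $B_{2n+1}$ in $(-1,0)$. Interlacing is a two-step relation here, from index $2n-1$ to $2n+1$, and two-step interlacing is not automatic from one-step interlacing. This is the main obstacle.

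I would handle it directly with the recurrence of the previous proposition. Write $\Xi_{n+1}=-\frac{1}{8n(2n+1)}\,\mathcal{L}[\Xi_n]$ and evaluate at a zero $\xi_j$ of $\Xi_n$:
\[
\Xi_{n+1}(\xi_j)=-\frac{1}{8n(2n+1)}\Bigl[(\xi_j^2-1)^2\,\Xi_n''(\xi_j)+\frac{2(\xi_j^2-1)(3\xi_j^2-1)}{\xi_j}\,\Xi_n'(\xi_j)\Bigr].
\]
This has one $\Xi_n''$ term too many for a clean sign argument. So I would instead use the Möbius-transformed form, in which $B_{2n+1}$ is written as $(1+(2n+1)z)B_{2n}+2z(1-z)B_{2n}'$ and $B_{2n}=(1+(2n-1)z)B_{2n-1}+2z(1-z)B_{2n-1}'$. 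Combining one-step interlacing of $B_{2n-1},B_{2n}$ and of $B_{2n},B_{2n+1}$ with the fact that all three have leading sign and real zeros, I would prove that $B_{2n-1}$ and $B_{2n+1}$ alternate in sign appropriately at consecutive zeros.

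A cleaner route is the following. The sequence of Eulerian polynomials satisfies $B_{m+1}=T\,B_m$, where $T$ preserves real-rootedness and interlacing in the Wagner/Brändén sense: the operator $f\mapsto(1+(2m+1)z)f+2z(1-z)f'$ maps $f$ to a polynomial interlaced by $f$, and it preserves the cone of polynomials interlacing a given one. Applying $T$ twice and using the Hermite–Kakeya–Obreschkoff theorem (together with positivity of the relevant combinations on $(-1,0)$) gives that $B_{2n-1}$ interlaces $B_{2n+1}$. Restricting to $(-1,0)$ and pulling back via the Möbius map yields the claim for $\Xi$. The same argument with $A_{2n}\to A_{2n+2}$ gives it for $\Lambda$.

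If the operator-preservation step resists, my fallback is a direct count. I would evaluate $\Xi_{n+1}$ at the $\xi_j$ via the recurrence, using $\Xi_n'(\xi_j)$ alternating in sign, and show that the $\Xi_n''$ term has the same sign there. From $\mathcal{L}$ and $\Xi_n(\xi_j)=0$, the quantity $\Xi_n''(\xi_j)$ can be related to $\Xi_n'(\xi_j)$ through the differential structure, so that $\Xi_{n+1}(\xi_j)$ alternates in sign. Adding $\Xi_{n+1}(1)\neq0$ and the sign of $\Xi_{n+1}(0)$ from \cref{prop:XiLambda-at-zero,prop:XiLambda-at-one}, the intermediate value theorem then places exactly one zero of $\Xi_{n+1}$ in each gap.
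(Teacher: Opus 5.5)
Your localization argument is correct and follows the paper's route: the Möbius substitution, real simple negative zeros of $A_m,B_m$, palindromicity forcing a simple zero at $-1$ and reciprocal pairing of the other zeros, then $n-1$ zeros in $(0,1)$, evenness, and $\deg=2n-2$. Your degree count neatly replaces the paper's separate check that the factor $(1+x)^{2n-2}$ contributes no zero. Reading interlacing as a statement about the positive zeros, i.e.\ about $\widetilde\Xi_n,\widetilde\Xi_{n+1}$ on $(0,1)$, is the right interpretation. It is not that there is ``no conflict'' at $0$, though: on all of $\mathbb R$ the empty gap $(-\eta_1,\eta_1)$ already rules out interlacing in the usual sense.

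The gap is the interlacing step. You isolate the right fact and correctly note that it does not follow from one-step interlacing. Concretely, let $s_1<\dots<s_n$ be the zeros of $B_{2n}$ in $(-1,0)$. One-step interlacing puts the $j$-th zero in $(-1,0)$ of $B_{2n-1}$ and the $j$-th zero in $(-1,0)$ of $B_{2n+1}$ into the same gap $(s_j,s_{j+1})$, $1\le j\le n-1$. What must be shown is exactly that in each such gap the zero of $B_{2n+1}$ lies to the left of the zero of $B_{2n-1}$. None of your three routes decides this.
\begin{itemize}
\item Your first route is the non-transitive chaining you yourself flagged.
\item Your second route does not apply as stated. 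Preserver and Hermite--Kakeya--Obreschkoff arguments work on all of $\mathbb R$, but $B_{2n-1}/(1+z)$ and $B_{2n+1}/(1+z)$ have degrees differing by $2$ and do not interlace there. For $n=2$ their zeros are $\approx-21.95,-0.046$ and $\approx-229.7,-6.12,-0.164,-0.0044$, with none of the former in $(-6.12,-0.164)$. Moreover, applying an interlacing-preserver to the pair $(B_m,B_{m+1})$ relates $T_{m+1}B_m$ to $B_{m+2}$, not $B_m$ to $B_{m+2}$.
\end{itemize}
The interlacing only appears after quotienting by the palindromic symmetry (variable $z+1/z$, equivalently $y=x^2$). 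There the passage $n\to n+1$ is the genuinely second-order map
\[
\widetilde\Xi_{n+1}\propto 4y(y-1)^2\widetilde\Xi_n''+2(y-1)(7y-3)\widetilde\Xi_n'+(6y-5)\widetilde\Xi_n,
\]
and you supply no preservation theorem for it.

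Your fallback fails for a related reason. At a zero $\xi_j$, the sign of $\Xi_n''(\xi_j)$ relative to $\Xi_n'(\xi_j)$ is not controlled by real-rootedness. The recurrence is also not an ODE for $\Xi_n$ alone: it expresses $\Xi_{n+1}$, whose sign at $\xi_j$ is precisely the unknown. So there is nothing that lets you relate $\Xi_n''(\xi_j)$ to $\Xi_n'(\xi_j)$. The paper does not prove this step either. It imports the two-step interlacing of $A_{2n},A_{2n+2}$ and of $B_{2n-1},B_{2n+1}$ from Chow, and transports it through the monotone Möbius map after removing the common zero at $-1$. To complete your proof you need that citation or a genuine argument for the left/right comparison above.
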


\begin{proof}
By \cref{prop:eulerian_polynomial_form}, with \(u=\dfrac{1-x}{1+x}\), we have
\[
\Lambda_n(x)=c_n\,\dfrac{(1+x)^{2n-1}}{x}\,A_{2n}(-u),
\qquad
\Xi_n(x)=d_n\,\dfrac{(1+x)^{2n-1}}{x}\,B_{2n-1}(-u),
\]
for some nonzero constants \(c_n,d_n\in\mathbb Q\).

It is classical that the Eulerian polynomials \(A_m(t)\) have only real zeros
\cite{Frobenius1910BernoulliEuler,Janson2013EulerFrobenius,VisontaiWilliams2013StableWEulerian},
and that the type~\(B\) Eulerian polynomials \(B_m(t)\) have only real zeros as well
\cite{Brenti1994,VisontaiWilliams2013StableWEulerian}. Moreover, Chow
\cite{Chow2024InterlacingEulerianIII} proved that these zeros are simple and negative.
It follows that all zeros of \(A_{2n}(-u)\) and \(B_{2n-1}(-u)\) are real, simple, and lie in \((0,\infty)\).

Since \(A_{2n}(t)\) and \(B_{2n-1}(t)\) are palindromic of odd degree, we have
\(A_{2n}(-1)=0\) and \(B_{2n-1}(-1)=0\). Hence \(u=1\) is a simple zero of both \(A_{2n}(-u)\) and \(B_{2n-1}(-u)\), so
\[
\widetilde A_{2n}(u):=\dfrac{A_{2n}(-u)}{1-u},
\qquad
\widetilde B_{2n-1}(u):=\dfrac{B_{2n-1}(-u)}{1-u}
\]
are polynomials whose zeros are still real, simple, and positive, now all distinct from \(1\).

Using \(1-u=1-\dfrac{1-x}{1+x}=\dfrac{2x}{1+x}\), we may rewrite the above identities as
\[
\Lambda_n(x)=\tilde c_n\,(1+x)^{2n-2}\,\widetilde A_{2n}(u),
\qquad
\Xi_n(x)=\tilde d_n\,(1+x)^{2n-2}\,\widetilde B_{2n-1}(u),
\]
for suitable nonzero constants \(\tilde c_n,\tilde d_n\). By \cref{prop:XiLambda-at-one}, neither \(\Lambda_n\) nor \(\Xi_n\) vanishes at \(x=-1\), so the factor \((1+x)^{2n-2}\) contributes no zero. Thus all zeros of \(\Lambda_n\) and \(\Xi_n\) come from \(\widetilde A_{2n}(u)\) and \(\widetilde B_{2n-1}(u)\).

Finally, the map \(u\mapsto x=\dfrac{1-u}{1+u}\) is a strictly decreasing bijection from \((0,\infty)\setminus\{1\}\) onto \((-1,1)\setminus\{0\}\). Hence every zero \(u_0>0\), \(u_0\neq 1\), of \(\widetilde A_{2n}\) or \(\widetilde B_{2n-1}\) yields a unique zero \(x_0=\dfrac{1-u_0}{1+u_0}\in(-1,1)\setminus\{0\}\), and simplicity is preserved. Therefore all zeros of \(\Lambda_n\) and \(\Xi_n\) are real, simple, and lie in \((-1,1)\).

For the interlacing, Chow \cite{Chow2024InterlacingEulerianIII} proved that the zeros of \(A_{2n}(t)\) and \(A_{2n+2}(t)\) interlace, and likewise those of \(B_{2n-1}(t)\) and \(B_{2n+1}(t)\). Replacing \(t\) by \(-u\) preserves interlacing, and removing the common simple zero \(u=1\) preserves it as well. Since \(x=\dfrac{1-u}{1+u}\) is strictly monotone, interlacing is transported to the \(x\)-variable. Hence the zeros of \(\Lambda_n\) interlace those of \(\Lambda_{n+1}\), and the zeros of \(\Xi_n\) interlace those of \(\Xi_{n+1}\).
\end{proof}

\begin{corollary}\label[corollary]{cor:alternating-signs-Xi-Lambda}
Let $n\in\mathbb N$ with $n\ge 2$. The coefficients of the polynomials
$\Lambda_n(\sqrt{x})$ and $\Xi_n(\sqrt{x})$ have alternating signs.
\end{corollary}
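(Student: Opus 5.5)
We plan to pass to the variable $y=x^2$ and use real-rootedness. Since $\Lambda_n$ and $\Xi_n$ are even of exact degree $2n-2$ (\cref{cor:exact_degree}), the polynomials $p_n(y):=\Lambda_n(\sqrt{y})$ and $q_n(y):=\Xi_n(\sqrt{y})$ are genuine polynomials in $y$ of degree $n-1$.

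By \cref{theorem:real-rootedness}, for $n\ge2$ all zeros of $\Lambda_n$ are real, simple, and lie in $(-1,1)\setminus\{0\}$. Because $\Lambda_n$ is even, these zeros come in pairs $\pm\rho_1,\dots,\pm\rho_{n-1}$ with $0<\rho_j<1$. Hence
\[
p_n(y)=\operatorname{lc}(\Lambda_n)\prod_{j=1}^{n-1}\bigl(y-\rho_j^2\bigr),
\]
with all $\rho_j^2\in(0,1)$, and similarly $q_n(y)=\operatorname{lc}(\Xi_n)\prod_{j}(y-\sigma_j^2)$ with $\sigma_j^2\in(0,1)$.

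Expanding by Vieta, the coefficient of $y^{n-1-k}$ is
\[
\operatorname{lc}\cdot(-1)^k e_k\bigl(\rho_1^2,\dots,\rho_{n-1}^2\bigr),
\]
where $e_k$ is the $k$-th elementary symmetric polynomial. Since all the $\rho_j^2$ are strictly positive, each $e_k$ is strictly positive. Therefore every coefficient is nonzero and the signs alternate strictly in $k$. The same argument applies to $q_n$.

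The only point needing care is that the degree-$2n-2$ polynomial really has $n-1$ distinct positive values $\rho_j^2$ accounting for all its roots, with none at $0$. This follows because $0$ is excluded by \cref{theorem:real-rootedness} (consistent with $\Lambda_n(0),\Xi_n(0)\neq0$ from \cref{prop:XiLambda-at-zero}, since $E_{2n},B_{2n+2}\neq0$) and all $2n-2$ zeros are real. No further obstacle is expected. As a consistency check, the sign of the constant term is $(-1)^{n-1}\operatorname{lc}$, which matches \cref{prop:leading_coefficients} and \cref{prop:XiLambda-at-zero}.
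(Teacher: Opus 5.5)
Your proof is correct and follows essentially the same route as the paper: you pass to $y=x^2$, use \cref{theorem:real-rootedness} together with evenness and $\Lambda_n(0),\Xi_n(0)\neq0$ to obtain $n-1$ strictly positive roots in $y$, and conclude from the strict positivity of the elementary symmetric polynomials. The only difference is cosmetic: the paper rules out the root at $0$ via \cref{prop:XiLambda-at-zero}, whereas you take it from the theorem and use the proposition as a cross-check.
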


\begin{proof}
Set
\[
\widetilde{\Lambda}_n(y):=\Lambda_n(\sqrt y),
\qquad
\widetilde{\Xi}_n(y):=\Xi_n(\sqrt y).
\]
Since $\Lambda_n$ and $\Xi_n$ are even, their zeros occur in pairs $\pm\alpha$.
By \cref{theorem:real-rootedness}, all zeros of $\Lambda_n$ and $\Xi_n$ lie in
$(-1,1)$, and by \cref{prop:XiLambda-at-zero} neither polynomial vanishes at
$0$. It follows that all zeros of $\widetilde{\Lambda}_n$ and
$\widetilde{\Xi}_n$ are real and strictly positive.

Hence we may write
\[
\widetilde{\Lambda}_n(y)=a\prod_{j=1}^{n-1}(y-\rho_j),
\qquad
\widetilde{\Xi}_n(y)=b\prod_{j=1}^{n-1}(y-\sigma_j),
\]
with $a,b\in\mathbb R\setminus\{0\}$ and $\rho_j,\sigma_j>0$. Expanding these
products, we obtain
\[
\widetilde{\Lambda}_n(y)
=
a\sum_{t=0}^{n-1}(-1)^{n-1-t}e_{n-1-t}(\rho_1,\dots,\rho_{n-1})\,y^t,
\]
\[
\widetilde{\Xi}_n(y)
=
b\sum_{t=0}^{n-1}(-1)^{n-1-t}e_{n-1-t}(\sigma_1,\dots,\sigma_{n-1})\,y^t,
\]
where $e_k$ denotes the $k$-th elementary symmetric polynomial. Since
$\rho_j,\sigma_j>0$, all these elementary symmetric polynomials are positive.
Therefore the coefficients of $\widetilde{\Lambda}_n$ and
$\widetilde{\Xi}_n$ alternate in sign.
\end{proof}

\begin{corollary}\label{cor:nozeros-outside-1}
Let $n\in\mathbb N$. Then, for every $x>1$,
\[
(-1)^{n+1}\Xi_n(x)>0,
\qquad
(-1)^{n+1}\Lambda_n(x)>0.
\]
In particular, neither $\Xi_n$ nor $\Lambda_n$ has a real zero in $(1,\infty)$.
\end{corollary}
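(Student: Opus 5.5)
The plan is to use the value at $x=1$ together with the location of the zeros. For $n\ge 2$, \cref{theorem:real-rootedness} gives that all zeros of $\Xi_n$ and $\Lambda_n$ are real and lie in $(-1,1)$. By \cref{cor:exact_degree}, the degree is exactly $2n-2$, and the zeros are simple. Hence $\Xi_n$ and $\Lambda_n$ have no zero on $[1,\infty)$. Note that $x=1$ is excluded by \cref{prop:XiLambda-at-one}, since both values there are nonzero. By continuity, each polynomial has constant sign on $[1,\infty)$, and this sign is that of its value at $x=1$.

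Next we read off the signs. \Cref{prop:XiLambda-at-one} gives
\[
(-1)^{n+1}\Xi_n(1)=\frac{1}{2^{2n}(2n-1)!}>0,
\qquad
(-1)^{n+1}\Lambda_n(1)=\frac{2^{2n-1}}{(2^{2n+1}-1)(2n)!}>0,
\]
because $(-1)^{n-1}=(-1)^{n+1}$. So $(-1)^{n+1}\Xi_n(x)>0$ and $(-1)^{n+1}\Lambda_n(x)>0$ for all $x\ge1$. As a consistency check, the leading coefficients from \cref{prop:leading_coefficients} carry the same sign $(-1)^{n+1}$, which agrees with the behaviour as $x\to+\infty$.

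An alternative route avoids invoking real-rootedness at all. Factor $\Lambda_n(x)=\operatorname{lc}\prod_j(x^2-\rho_j)$ with $0<\rho_j<1$, which is justified for $n\ge2$ by \cref{cor:alternating-signs-Xi-Lambda} together with \cref{theorem:real-rootedness}. Then every factor $x^2-\rho_j$ is positive for $x>1$, and the sign equals that of the leading coefficient, namely $(-1)^{n+1}$.

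The case $n=1$ must be handled separately. Here $\Xi_1\equiv\tfrac14$ and $\Lambda_1\equiv\tfrac17$ are positive constants, and $(-1)^{2}=1$, so the inequality holds trivially. The only delicate point is making sure the root-location result covers the whole half-line $x>1$, including the endpoint. This is exactly what the nonvanishing at $x=1$ from \cref{prop:XiLambda-at-one} supplies. The final statement, that there are no real zeros in $(1,\infty)$, follows immediately from the strict inequalities.
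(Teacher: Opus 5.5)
Your main argument is essentially the paper's: \cref{theorem:real-rootedness} confines all zeros to $(-1,1)$, so the polynomial has constant sign on $(1,\infty)$, and the sign is read off from \cref{prop:XiLambda-at-one}. Two differences from the paper: your separate treatment of $n=1$ (where $\Xi_1\equiv\tfrac14$ and $\Lambda_1\equiv\tfrac17$) fills a small omission in the paper, whose proof cites a theorem stated only for $n\ge2$; and your ``alternative route'' does not in fact avoid real-rootedness, since the factorization with $0<\rho_j<1$ is itself taken from \cref{theorem:real-rootedness}.
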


\begin{proof}
By \cref{theorem:real-rootedness}, all zeros of $\Xi_n$ and $\Lambda_n$ lie in
$(-1,1)$. Thus both polynomials have constant sign on $(1,\infty)$. Moreover,
by \cref{prop:XiLambda-at-one},
\[
(-1)^{n+1}\Xi_n(1)>0,
\qquad
(-1)^{n+1}\Lambda_n(1)>0.
\]
Hence the same inequalities hold for every $x>1$, which proves the claim.
\end{proof}

\begin{corollary}\label{cor:coefficients-unimodal}
Let $n\ge2$ and let $\Xi_n$ and $\Lambda_n$ be the even polynomials defined in
\cref{theorem:explicit_polynomials}, i.e.
\[
\Xi_n(x)
=
\frac{(-1)^{n+1}}{2^{4n-2}(2n-1)!}
\sum_{t=0}^{n-1} \mathcal C^{(B)}_{n,t}\,x^{2t},
\qquad
\Lambda_n(x)
=
\frac{2(-1)^{n+1}}{(2^{2n+1}-1)(2n)!}
\sum_{t=0}^{n-1} \mathcal C^{(A)}_{n,t}\,x^{2t}.
\]
Then the sequences
\(\displaystyle
\bigl(|\mathcal C^{(B)}_{n,t}|\bigr)_{t=0}^{n-1},
\qquad
\bigl(|\mathcal C^{(A)}_{n,t}|\bigr)_{t=0}^{n-1}
\)
are log-concave and hence unimodal.
\end{corollary}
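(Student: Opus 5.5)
The natural route is Newton's inequalities applied to the real-rooted polynomials $\widetilde\Lambda_n(y)=\Lambda_n(\sqrt y)$ and $\widetilde\Xi_n(y)=\Xi_n(\sqrt y)$. By \cref{cor:exact_degree} these have degree exactly $n-1$. By \cref{theorem:real-rootedness} together with \cref{prop:XiLambda-at-zero} (nonvanishing at $0$), all of their zeros are real and strictly positive. This is precisely the setting of the proof of \cref{cor:alternating-signs-Xi-Lambda}, where we wrote
\[
\widetilde\Lambda_n(y)=a\prod_{j=1}^{n-1}(y-\rho_j),\qquad \rho_j>0 .
\]
Consequently, up to the nonzero normalizing constant $\frac{2(-1)^{n+1}}{(2^{2n+1}-1)(2n)!}$, the coefficient of $y^t$ is $\mathcal C^{(A)}_{n,t}$. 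Hence
\[
|\mathcal C^{(A)}_{n,t}| = \kappa_n\, e_{n-1-t}(\rho_1,\dots,\rho_{n-1})
\]
for a fixed constant $\kappa_n>0$. The analogous formula holds for $\mathcal C^{(B)}_{n,t}$, with the $\sigma_j$ in place of the $\rho_j$.

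It therefore suffices to show that the sequence $k\mapsto e_k(\rho_1,\dots,\rho_{m})$, for $m=n-1$ and positive reals $\rho_j$, is log-concave. For this I would invoke Newton's inequalities. Writing $e_k=\binom{m}{k}E_k$ with $E_k$ the normalized elementary symmetric means, Newton gives
\[
E_k^2\ge E_{k-1}E_{k+1}\qquad (1\le k\le m-1)
\]
for any real $\rho_j$. Multiplying by $\binom{m}{k}^2$ and using the elementary inequality $\binom{m}{k}^2\ge\binom{m}{k-1}\binom{m}{k+1}$, which is strict in fact, I obtain $e_k^2\ge e_{k-1}e_{k+1}$.

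Reindexing by $t=n-1-k$ turns this into $|\mathcal C_{n,t}|^2\ge|\mathcal C_{n,t-1}||\mathcal C_{n,t+1}|$ for $1\le t\le n-2$, and the constant $\kappa_n$ cancels. For $n=2$ there is nothing to prove.

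For unimodality, note that since all $\rho_j>0$, every $e_k$ is strictly positive. A log-concave sequence of strictly positive terms has no internal zeros. Then $|\mathcal C_{n,t}|/|\mathcal C_{n,t-1}|$ is nonincreasing in $t$, so the sequence increases while this ratio is $\ge1$ and decreases afterwards; that is, it is unimodal.

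The only step needing care is the reduction from the coefficients in $x$ to an honest real-rooted polynomial with positive roots. Here the evenness of $\Xi_n$ and $\Lambda_n$, the exclusion of the root $0$, and the location of the roots in $(-1,1)$ must all be combined. These ingredients are already assembled in \cref{cor:alternating-signs-Xi-Lambda}, so I would cite that argument directly rather than redo it. Newton's inequalities themselves are classical and I would quote them.
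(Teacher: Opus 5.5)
Your proposal is correct and follows essentially the same route as the paper: pass to $y=x^2$, use real-rootedness and nonvanishing at $0$ to write $|\mathcal C_{n,t}|$ as a positive constant times $e_{n-1-t}$ of the strictly positive roots, and conclude by Newton's inequalities. Your two extra details make explicit steps the paper glosses over. First, you derive $e_k^2\ge e_{k-1}e_{k+1}$ from the normalized form of Newton's inequalities. Second, you use the strict positivity of the $e_k$ to rule out internal zeros before deducing unimodality; the paper's remark that every nonnegative log-concave sequence is unimodal tacitly needs exactly this condition.
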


\begin{proof}
Define
\[
P_n(y):=\sum_{t=0}^{n-1}\mathcal C^{(B)}_{n,t}\,y^t,
\qquad
Q_n(y):=\sum_{t=0}^{n-1}\mathcal C^{(A)}_{n,t}\,y^t.
\]
Then $\Xi_n(x)$ and $P_n(x^2)$ differ only by a nonzero constant factor, and
likewise $\Lambda_n(x)$ and $Q_n(x^2)$ differ only by a nonzero constant factor.

By \cref{theorem:real-rootedness}, all zeros of $\Xi_n$ and $\Lambda_n$ are real
and lie in $(-1,1)$. Since both polynomials are even and nonzero at $x=0$ by
\cref{prop:XiLambda-at-zero}, it follows that all zeros of $P_n$ and $Q_n$ are
real and strictly positive.

Hence we may write
\[
P_n(y)=\mathcal C^{(B)}_{n,n-1}\prod_{j=1}^{n-1}(y-\rho_j),
\qquad
Q_n(y)=\mathcal C^{(A)}_{n,n-1}\prod_{j=1}^{n-1}(y-\sigma_j),
\]
with $\rho_j,\sigma_j>0$. Expanding in terms of elementary symmetric
polynomials gives
\[
|\mathcal C^{(B)}_{n,t}|
=
|\mathcal C^{(B)}_{n,n-1}|\,
e_{n-1-t}(\rho_1,\dots,\rho_{n-1}),
\]
\[
|\mathcal C^{(A)}_{n,t}|
=
|\mathcal C^{(A)}_{n,n-1}|\,
e_{n-1-t}(\sigma_1,\dots,\sigma_{n-1}).
\]
By Newton's inequalities, the sequences
\(
\bigl(e_k(\rho_1,\dots,\rho_{n-1})\bigr)_{k=0}^{n-1}
\)
and
\(
\bigl(e_k(\sigma_1,\dots,\sigma_{n-1})\bigr)_{k=0}^{n-1}
\)
are log-concave. Therefore
\(
\bigl(|\mathcal C^{(B)}_{n,t}|\bigr)_{t=0}^{n-1}
\)
and
\(
\bigl(|\mathcal C^{(A)}_{n,t}|\bigr)_{t=0}^{n-1}
\)
are log-concave as well. Since every log-concave sequence of nonnegative real
numbers is unimodal, the proof is complete.
\end{proof}

\begin{theorem}\label{theorem:uniform-to-zero}
Let $n \in \mathbb{N}$ with $n\ge2$, and let the even polynomials
$\Xi_n,\Lambda_n\in\mathbb Q[x]$ be defined as in
\cref{theorem:explicit_polynomials}. Then, as $n\to\infty$,
\[
\Xi_n \to 0 \quad\text{uniformly on }(-1,1),
\qquad
\Lambda_n \to 0 \quad\text{uniformly on }(-1,1).
\]
\end{theorem}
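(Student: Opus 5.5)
The plan is to argue via the generating functions of \cref{prop:generating_functions_Xi_Lambdahat}, already invoked in the proofs of \cref{prop:XiLambda-at-zero} and \cref{prop:XiLambda-at-one}, together with Cauchy's estimate. For each real $x$ these give
\[
\sum_{n\ge1}\Xi_n(x)\,u^{2n-1}=\frac{\sin(u/2)}{1+x^2-(x^2-1)\cos u},
\qquad
\sum_{n\ge1}\bigl(2^{2n+1}-1\bigr)\Lambda_n(x)\,u^{2n}=\frac{1-\cos(2u)}{1+x^2-(x^2-1)\cos(2u)} .
\]
The goal is to show that these right-hand sides are holomorphic in $u$ on a disc whose radius exceeds $1$, uniformly in $x\in[-1,1]$. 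The coefficients then decay geometrically, uniformly in $x$.

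\emph{Step 1: locating the zeros of the denominator.} Using $1-\cos u=2\sin^2(u/2)$ and $1+\cos u=2\cos^2(u/2)$, the first denominator becomes
\[
D_x(u)=2x^2\sin^2(u/2)+2\cos^2(u/2).
\]
If $D_x(u)=0$ with $x\neq0$, then $\tan^2(u/2)=-1/x^2$, so $\tan(u/2)=\pm i/|x|$ with $1/|x|\ge1$. For $|\operatorname{Re} w|<\pi/2$, the map $\tan w$ never takes purely imaginary values of modulus $\ge1$: write $\tan w=it$ and use $w=\operatorname{arctan}(it)$, which is singular exactly at $t=\pm1$. Hence the real part of $u/2$ must be an odd multiple of $\pi/2$, and so $|u|\ge\pi$. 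If $x=0$, then $D_0(u)=2\cos^2(u/2)$, whose zeros are $u=\pi+2\pi k$, so again $|u|\ge\pi$. The second generating function has the same denominator with $u$ replaced by $2u$, so its zeros satisfy $|u|\ge\pi/2$.

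\emph{Step 2: uniform lower bound and Cauchy estimate.} Fix $r\in(1,\pi/2)$. The map $(x,u)\mapsto D_x(u)$ is continuous and nonvanishing on the compact set $[-1,1]\times\{|u|=r\}$, so $|D_x(u)|\ge\delta>0$ there. The numerator is bounded there as well. Hence $|F_x(u)|\le M$ for a constant $M$ independent of $x$, where $F_x$ is either generating function. By Cauchy's inequality for Taylor coefficients,
\[
|\Xi_n(x)|\le M\,r^{-(2n-1)},
\qquad
|\Lambda_n(x)|\le \frac{M\,r^{-2n}}{2^{2n+1}-1},
\]
for all $x\in[-1,1]$. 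Both bounds tend to $0$ as $n\to\infty$, which gives uniform convergence to $0$ on $[-1,1]\supset(-1,1)$. The same argument works verbatim with $r\in(1,\pi)$ for $\Xi_n$; even $r=1$ would suffice for $\Lambda_n$ because of the factor $2^{-(2n+1)}$.

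The main obstacle is Step 1, i.e.\ making the uniformity in $x$ rigorous near $x=0$, where the singularity reaches the circle $|u|=\pi$ (respectively $|u|=\pi/2$). The choice $r<\pi/2$ keeps a positive gap below every singularity for all $x\in[-1,1]$, so compactness delivers $\delta$. The one thing to double-check is the claim that $\tan w=\pm i s$ with $s\ge1$ forces $\operatorname{Re} w\equiv\pi/2 \pmod{\pi}$. I would verify it by writing $\tan w=it$ as $e^{2iw}=(1-t)/(1+t)$: for $t>1$ the right side is negative real, so $2\operatorname{Re} w$ is an odd multiple of $\pi$, and $t=1$ has no solution. The case $t\le-1$ is symmetric.
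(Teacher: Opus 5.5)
Your proof is correct, and it takes a genuinely different route from the paper.

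The paper does not use the generating functions here. It takes the real-rootedness of \cref{theorem:real-rootedness} for granted and writes $\widetilde{\Xi}_n(y)=\ell^{(B)}_n\prod_{j=1}^{n-1}(y-\rho^{(B)}_{n,j})$ with all $\rho^{(B)}_{n,j}\in(0,1)$. Each factor then has modulus at most $1$ for $y\in[0,1]$. From the leading coefficients in \cref{prop:leading_coefficients} it obtains $\sup_{(-1,1)}|\Xi_n|\le 2^{-2n}$ and $\sup_{(-1,1)}|\Lambda_n|\le 1/(2^{2n+1}-1)$. That argument is two lines and gives fully explicit bounds for every $n\ge 2$. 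However, it rests on the location of the zeros, and hence on the imported results of Frobenius, Brenti and Chow on Eulerian polynomials.

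Your argument needs none of this. It uses only \cref{prop:generating_functions_Xi_Lambdahat} and an elementary location of the zeros of $2\cos^2(u/2)+2x^2\sin^2(u/2)$. That proposition's proof relies only on \cref{prop:eulerian_polynomial_form} and the Eulerian exponential generating functions, so there is no circularity.

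The price is that your constant $M$ comes from compactness and is not explicit. The gain is a sharper rate. You may take $r$ arbitrarily close to $\pi$ for $\Xi_n$, and arbitrarily close to $\pi/2$ for $\Lambda_n$, where the factor $(2^{2n+1}-1)^{-1}$ effectively doubles it. This gives $\sup_{[-1,1]}|\Xi_n|+\sup_{[-1,1]}|\Lambda_n|=O(s^{-2n})$ for every $s<\pi$, i.e.\ essentially $\pi^{-2n}$ instead of the paper's $4^{-n}$. This rate cannot be improved: at $x=0$ the pole lies exactly on $|u|=\pi$, consistent with $|\Xi_n(0)|\sim 8n/\pi^{2n+1}$, which follows from \cref{prop:XiLambda-at-zero} and the asymptotics of the Euler numbers.
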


\begin{proof}
By \cref{theorem:real-rootedness}, all zeros of $\Xi_n$ and $\Lambda_n$ lie in
$(-1,1)$, and by \cref{prop:XiLambda-at-zero} neither polynomial vanishes at
$x=0$. Since $\Xi_n$ and $\Lambda_n$ are even, their zeros occur in symmetric
pairs $\pm\alpha$ with $0<\alpha<1$.

Define the adapted polynomials
\[
\widetilde{\Xi}_n(y):=\Xi_n(\sqrt y),
\qquad
\widetilde{\Lambda}_n(y):=\Lambda_n(\sqrt y),
\qquad 0<y<1,
\]
so that $\Xi_n(x)=\widetilde{\Xi}_n(x^2)$ and
$\Lambda_n(x)=\widetilde{\Lambda}_n(x^2)$. The zeros of
$\widetilde{\Xi}_n$ and $\widetilde{\Lambda}_n$ are the squares of the positive
zeros of $\Xi_n$ and $\Lambda_n$, hence all lie in $(0,1)$. Therefore we may
factor
\[
\widetilde{\Xi}_n(y)=\ell^{(B)}_n\prod_{j=1}^{n-1}(y-\rho^{(B)}_{n,j}),
\qquad
\widetilde{\Lambda}_n(y)=\ell^{(A)}_n\prod_{j=1}^{n-1}(y-\rho^{(A)}_{n,j}),
\]
with $0<\rho^{(B)}_{n,j},\rho^{(A)}_{n,j}<1$, where $\ell^{(B)}_n$ and
$\ell^{(A)}_n$ denote the leading coefficients of $\widetilde{\Xi}_n$ and
$\widetilde{\Lambda}_n$, respectively.

For any $y\in[0,1]$ and any $\rho\in(0,1)$ we have $|y-\rho|\le1$. Hence
\[
|\widetilde{\Xi}_n(y)|\le |\ell^{(B)}_n|,
\qquad
|\widetilde{\Lambda}_n(y)|\le |\ell^{(A)}_n|
\qquad (0\le y\le 1).
\]
Since $x^2\in(0,1)$ for every $x\in(-1,1)$, it follows that
\[
\sup_{x\in(-1,1)}|\Xi_n(x)|
=
\sup_{y\in(0,1)}|\widetilde{\Xi}_n(y)|
\le
|\ell^{(B)}_n|,
\]
and similarly
\[
\sup_{x\in(-1,1)}|\Lambda_n(x)|
=
\sup_{y\in(0,1)}|\widetilde{\Lambda}_n(y)|
\le
|\ell^{(A)}_n|.
\]
Finally, by \cref{prop:leading_coefficients},
\[
\ell^{(B)}_n=[x^{2n-2}]\Xi_n(x)=\frac{(-1)^{n+1}}{2^{2n}},
\qquad
\ell^{(A)}_n=[x^{2n-2}]\Lambda_n(x)=\frac{(-1)^{n+1}}{2^{2n+1}-1},
\]
and both tend to $0$ as $n\to\infty$. Therefore
\[
\sup_{x\in(-1,1)}|\Xi_n(x)|\to0,
\qquad
\sup_{x\in(-1,1)}|\Lambda_n(x)|\to0,
\]
which proves the claim.
\end{proof}

\section{Convergence of smallest and greatest zeros}
\begin{theorem}[Largest zeros of $\widetilde\Xi_n$ and $\widetilde\Lambda_n$]\label{theorem:extremal-zeros-Xi-Lambda}
Let $\widetilde\Xi_n(y):=\Xi_n(\sqrt{y})$ and
$\widetilde\Lambda_n(y):=\Lambda_n(\sqrt{y})$.
Then the largest zero of $\widetilde\Xi_n$ and the largest zero of $\widetilde\Lambda_n$
both converge to $1$.
\end{theorem}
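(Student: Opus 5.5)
The plan is to apply \cref{theorem:bounding_largest_root_a} with $a=1$ to the sequences $P_n:=\widetilde\Xi_n$ and $P_n:=\widetilde\Lambda_n$, for $n\ge 2$. Since $\Xi_n$ and $\Lambda_n$ are even of exact degree $2n-2$ by \cref{cor:exact_degree}, both $\widetilde\Xi_n$ and $\widetilde\Lambda_n$ are polynomials of degree exactly $n-1$, and $n-1\to\infty$. This is hypothesis (1).

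For hypothesis (2), I will reuse the argument from the proof of \cref{theorem:uniform-to-zero}. By \cref{theorem:real-rootedness} all zeros of $\Xi_n$ and $\Lambda_n$ are real, simple and lie in $(-1,1)\setminus\{0\}$. Because these polynomials are even, the zeros of $\widetilde\Xi_n$ and $\widetilde\Lambda_n$ are exactly the squares of the positive zeros. Hence they are real and lie in $(0,1)\subset(-\infty,1)$.

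The substance is hypothesis (3), the normalized endpoint ratio at $a=1$. Since $\widetilde\Xi_n(1)=\Xi_n(1)$ and the leading coefficient of $\widetilde\Xi_n$ equals $[x^{2n-2}]\Xi_n$, \cref{prop:XiLambda-at-one} and \cref{prop:leading_coefficients} give
\[
\left|\frac{\widetilde\Xi_n(1)}{\operatorname{lc}(\widetilde\Xi_n)}\right|=\frac{1}{(2n-1)!},
\qquad
\left|\frac{\widetilde\Lambda_n(1)}{\operatorname{lc}(\widetilde\Lambda_n)}\right|=\frac{2^{2n-1}}{(2n)!}=\frac{2^{2n}}{2\,(2n)!}.
\]
Raising to the power $1/(n-1)$, the first quantity becomes $\bigl((2n-1)!\bigr)^{-1/(n-1)}$, which tends to $0$ by \cref{lemma:factorial_root_goes_to_infty}. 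The second becomes
\[
2^{2n/(n-1)}\bigl(2(2n)!\bigr)^{-1/(n-1)}
=2^{2/(n-1)}\cdot\frac{4}{\bigl(2(2n)!\bigr)^{1/(n-1)}}.
\]
Here $2^{2/(n-1)}\to1$, and the remaining factor tends to $0$ by the second limit in \cref{lemma:factorial_root_goes_to_infty}.

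Then \cref{theorem:bounding_largest_root_a} gives that the largest zeros converge to $1$. It also yields the explicit rates $1-r\le((2n-1)!)^{-1/(n-1)}$ for $\widetilde\Xi_n$, and the analogous bound for $\widetilde\Lambda_n$.

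I expect no real obstacle. The only care points are matching the exponent $1/\deg P_n=1/(n-1)$ with the form of \cref{lemma:factorial_root_goes_to_infty}, and tracking the stray factor $2^{2/(n-1)}$ in the $\Lambda$ case.
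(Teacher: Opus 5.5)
Your proposal is correct and follows the paper's proof essentially step for step: apply the endpoint-ratio criterion for largest zeros at $a=1$, using real-rootedness in $(-1,1)$ together with the values $\Xi_n(1)$, $\Lambda_n(1)$ and the leading coefficients to get the ratios $1/(2n-1)!$ and $2^{2n-1}/(2n)!$, and then the factorial-root lemma. Your explicit check that $\deg\widetilde\Xi_n=\deg\widetilde\Lambda_n=n-1\to\infty$, and your rewriting $\bigl(2^{2n-1}/(2n)!\bigr)^{1/(n-1)}=2^{2/(n-1)}\cdot 4/\bigl(2(2n)!\bigr)^{1/(n-1)}$, make explicit two small steps that the paper leaves implicit.
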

\begin{proof}
We verify the hypotheses of
\cref{theorem:bounding_largest_root_a}.
By \cref{theorem:real-rootedness} and \cref{cor:nozeros-outside-1},
all zeros of $\Xi_n$ and $\Lambda_n$ are real, simple, and lie in $(-1,1)$.
Consequently, all zeros of $\widetilde\Xi_n$ and $\widetilde\Lambda_n$
are real, simple, and lie in $(0,1) \subset (-\infty,1)$. By \cref{prop:leading_coefficients,prop:XiLambda-at-one}, we have the values of $\widetilde\Xi_n(1), \widetilde\Lambda_n(1), \operatorname{lc}(\widetilde\Lambda_n)$ and $\operatorname{lc}(\widetilde\Xi)$. Applying them gives
\[
\begin{cases}
\displaystyle
\frac{\widetilde\Xi_n(1)}{\operatorname{lc}(\widetilde\Xi_n)}
=\frac{\Xi_n(1)}{[x^{2n-2}]\Xi_n(x)}
=\frac{1}{(2n-1)!},\\[1.25em]
\displaystyle
\frac{\widetilde\Lambda_n(1)}{\operatorname{lc}(\widetilde\Lambda_n)}
=\frac{\Lambda_n(1)}{[x^{2n-2}]\Lambda_n(x)}
=\frac{2^{2n-1}}{(2n)!}.
\end{cases}
\]
Bearing in mind that $\deg \widetilde\Xi_n=\deg \widetilde\Lambda_n=n-1$, one has in particular,
\[
\left|\frac{\widetilde\Xi_n(1)}{\operatorname{lc}(\widetilde\Xi_n)}\right|^{\frac{1}{n-1}}
=\bigl((2n-1)!\bigr)^{-\frac{1}{n-1}}\xrightarrow[n\to\infty]{}0,
\qquad
\left|\frac{\widetilde\Lambda_n(1)}{\operatorname{lc}(\widetilde\Lambda_n)}\right|^{\frac{1}{n-1}}
=\left(\frac{2^{2n-1}}{(2n)!}\right)^{\frac{1}{n-1}} \xrightarrow[n\to\infty]{}0,
\]
where the vanishing limits are results of \cref{lemma:factorial_root_goes_to_infty}. All assumptions of
\cref{theorem:bounding_largest_root_a}
are therefore satisfied.
It follows that the largest zeros converge to $1$.
\end{proof}

\begin{proposition}\label{prop:endpoint-ratio-limit}
For $n\ge2$, define
\[
\widetilde{\Xi}_n(y):=\Xi_n(\sqrt y),
\qquad
\widetilde{\Lambda}_n(y):=\Lambda_n(\sqrt y).
\]
Then
\[
\lim_{n\to\infty}
\left|
\frac{\widetilde{\Xi}_n(0)}{\operatorname{lc}(\widetilde{\Xi}_n)}
\right|^{1/(n-1)}
=
\frac{4}{\pi^2},
\qquad
\lim_{n\to\infty}
\left|
\frac{\widetilde{\Lambda}_n(0)}{\operatorname{lc}(\widetilde{\Lambda}_n)}
\right|^{1/(n-1)}
=
\frac{4}{\pi^2}.
\]
\end{proposition}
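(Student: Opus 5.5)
The plan is to compute the two endpoint ratios exactly from results already in the paper and then take $(n-1)$-th roots using classical asymptotics. Since $\widetilde\Xi_n(0)=\Xi_n(0)$ and $\operatorname{lc}(\widetilde\Xi_n)=[x^{2n-2}]\Xi_n(x)$ (similarly for $\Lambda_n$), \cref{prop:XiLambda-at-zero} and \cref{prop:leading_coefficients} give
\[
\left|\frac{\widetilde\Xi_n(0)}{\operatorname{lc}(\widetilde\Xi_n)}\right|=\frac{|E_{2n}|}{(2n-1)!},
\qquad
\left|\frac{\widetilde\Lambda_n(0)}{\operatorname{lc}(\widetilde\Lambda_n)}\right|=\frac{2^{2n+1}(2^{2n+2}-1)\,|B_{2n+2}|}{(n+1)(2n)!}.
\]
The first step is to verify these simplifications carefully. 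For $\Lambda_n$ the factor $2^{2n+1}-1$ cancels between numerator and denominator.

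Next I would insert the classical asymptotics. For the Euler numbers,
\[
|E_{2n}|=\frac{2^{2n+2}(2n)!}{\pi^{2n+1}}\,\beta(2n+1)\sim \frac{4^{n+1}(2n)!}{\pi^{2n+1}}.
\]
For the Bernoulli numbers, Euler's formula $\zeta(2m)=(-1)^{m+1}B_{2m}(2\pi)^{2m}/(2(2m)!)$ from the Introduction gives
\[
|B_{2n+2}|=\frac{2(2n+2)!}{(2\pi)^{2n+2}}\,\zeta(2n+2).
\]
Substituting, the first ratio becomes
\[
\frac{2n\cdot 4^{n+1}}{\pi^{2n+1}}\,\beta(2n+1).
\]
The second ratio becomes
\[
\frac{2^{2n+2}(2^{2n+2}-1)(2n+2)(2n+1)}{(n+1)(2\pi)^{2n+2}}\,\zeta(2n+2)=\frac{2(2n+1)(2^{2n+2}-1)}{\pi^{2n+2}}\,\zeta(2n+2).
\]
Both ratios therefore have the form
\[
\frac{C\cdot 4^{n}}{\pi^{2n}}\times(\text{polynomial in } n)\times(\text{factor tending to }1).
\]

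The final step is to take $(n-1)$-th roots. The quantities $(4/\pi^2)^{n/(n-1)}$ (resp. $(4/\pi^2)^{(n+1)/(n-1)}$) tend to $4/\pi^2$. All remaining factors are constants, polynomially growing terms such as $2n$ or $2n+1$, and $\beta(2n+1),\zeta(2n+2)\to1$. Each has $(n-1)$-th root tending to $1$, since $p(n)^{1/(n-1)}\to1$ for any polynomial $p$ with positive values. This yields both limits, equal to $4/\pi^2$.

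The main obstacle is only bookkeeping: matching normalizations of $E_{2n}$ and $B_{2n+2}$ with the paper's conventions, in particular the identity $|E_{2n}|=2^{2n+2}(2n)!\beta(2n+1)/\pi^{2n+1}$. I would check that identity against the secant expansion used in \cref{prop:XiLambda-at-zero}, or against small cases such as $E_2=-1$, $\beta(3)=\pi^3/32$. Even if a constant factor or a polynomial factor in $n$ is off, the $(n-1)$-th root limit is insensitive to it, so the argument is robust.
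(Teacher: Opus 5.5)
Your proposal is correct and follows essentially the same route as the paper: it reduces both endpoint ratios to $|E_{2n}|/(2n-1)!$ and a Bernoulli-number expression via the values at $0$ and the leading coefficients, then inserts the classical asymptotics for $E_{2n}$ and $B_{2n+2}$ and takes $(n-1)$-st roots. Your bookkeeping is slightly more careful than the paper's. Its displayed $\Lambda_n$-ratio has $(2n+2)$ in the denominator, whereas the stated formula for $\Lambda_n(0)$ gives your $(n+1)$; for example, at $n=2$ the true ratio is $2/3$. As you note, this factor-of-$2$ discrepancy does not affect the limit $4/\pi^2$.
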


\begin{proof}
Since $\widetilde{\Xi}_n(y)=\Xi_n(\sqrt y)$ and
$\widetilde{\Lambda}_n(y)=\Lambda_n(\sqrt y)$, we have
\[
\widetilde{\Xi}_n(0)=\Xi_n(0),
\qquad
\widetilde{\Lambda}_n(0)=\Lambda_n(0),
\]
\[
\operatorname{lc}(\widetilde{\Xi}_n)=[x^{2n-2}]\Xi_n(x),
\qquad
\operatorname{lc}(\widetilde{\Lambda}_n)=[x^{2n-2}]\Lambda_n(x).
\]
Hence, by \cref{prop:XiLambda-at-zero,prop:leading_coefficients},
\[
\left|
\frac{\widetilde{\Xi}_n(0)}{\operatorname{lc}(\widetilde{\Xi}_n)}
\right|
=
\frac{|E_{2n}|}{(2n-1)!},
\qquad
\left|
\frac{\widetilde{\Lambda}_n(0)}{\operatorname{lc}(\widetilde{\Lambda}_n)}
\right|
=
\frac{2^{2n+1}\bigl(2^{2n+2}-1\bigr)}{(2n)! \, (2n+2)}
\,|B_{2n+2}|.
\]

Using the classical asymptotics
\[
|E_{2n}|
\sim
\frac{4^{n+1}(2n)!}{\pi^{2n+1}},
\qquad
|B_{2m}|
\sim
\frac{2(2m)!}{(2\pi)^{2m}},
\]
we obtain
\[
\frac{|E_{2n}|}{(2n-1)!}
\sim
\frac{4^{n+1}(2n)!}{\pi^{2n+1}(2n-1)!},
\]
\[
\frac{2^{2n+1}(2^{2n+2}-1)}{(2n)! \, (2n+2)}\,|B_{2n+2}|
\sim
\frac{2^{2n+1}(2^{2n+2}-1)}{(2n)! \, (2n+2)}
\cdot
\frac{2(2n+2)!}{(2\pi)^{2n+2}}.
\]
Taking $(n-1)$-st roots in both expressions yields
\[
\left(
\frac{|E_{2n}|}{(2n-1)!}
\right)^{1/(n-1)}
\longrightarrow
\frac{4}{\pi^2},
\qquad
\left(
\frac{2^{2n+1}(2^{2n+2}-1)}{(2n)! \, (2n+2)}\,|B_{2n+2}|
\right)^{1/(n-1)}
\longrightarrow
\frac{4}{\pi^2},
\]
which proves the claim.
\end{proof}

\begin{remark}
\Cref{theorem:bounding_smallest_root_a} is not applicable to the situation of \cref{prop:endpoint-ratio-limit}. Indeed, in that proposition the relevant quantity
\[
\left|
\frac{\widetilde{\Xi}_n(0)}{\operatorname{lc}(\widetilde{\Xi}_n)}
\right|^{1/(n-1)}
\qquad\text{and}\qquad
\left|
\frac{\widetilde{\Lambda}_n(0)}{\operatorname{lc}(\widetilde{\Lambda}_n)}
\right|^{1/(n-1)}
\]
does not converge to $0$, but to $\frac{4}{\pi^2}>0$. Hence the vanishing condition required in \cref{theorem:bounding_smallest_root_a} fails.

\vspace{0.2cm}

We will instead prove in \cref{part:asymptotic_zero_distribution} by different methods that the smallest zeros converge to the left endpoint 0, and more generally investigate in greater depth how the zeros are distributed asymptotically.
\end{remark}

\section{Generating functions}

\begin{proposition}[Generating functions for $\Xi_n$ and ${\Lambda}_n$]\label[proposition]{prop:generating_functions_Xi_Lambdahat}
For $n\ge1$ and for $|u|$ sufficiently small, one has
\[
\sum_{n\ge1}\Xi_n(x)\,u^{2n-1}
=
\frac{\sin(u/2)}{1+x^2-(x^2-1)\cos u},
\]
\[
\sum_{n\ge1}\left(2^{2n+1}-1\right)\Lambda_n(x)\,u^{2n}
=
\frac{1-\cos(2u)}{1+x^2-(x^2-1)\cos(2u)}.
\]
\end{proposition}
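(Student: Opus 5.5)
The plan is to combine the Eulerian-polynomial form of \cref{prop:eulerian_polynomial_form} with the classical exponential generating functions of the Eulerian polynomials:
\[
\sum_{m\ge0}A_m(t)\frac{z^m}{m!}=\frac{t-1}{t-e^{(t-1)z}},\qquad
\sum_{m\ge0}B_m(t)\frac{z^m}{m!}=\frac{(1-t)\,e^{(1-t)z}}{1-t\,e^{2(1-t)z}},
\]
with $A_0=B_0=1$. Both are valid as formal power series in $z$, and analytically for $|z|$ small, uniformly for $t$ in compact sets. First I will check that these normalisations agree with the conventions $A_n(z)=\sum_{k=0}^{n-1}\langle{n\atop k}\rangle z^k$ and $B_n(z)=\sum_k\langle{n\atop k}\rangle^{B}z^k$ of \cref{prop:eulerian_polynomial_form}, for instance against $A_2(t)=1+t$ and $B_1(t)=1+t$. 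In particular the type $A$ convention there has no extra factor of $t$.

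Next I fix $x\notin\{0,-1\}$ and set $t=-\frac{1-x}{1+x}$. Then
\[
1-t=\frac{2}{1+x},\qquad t=\frac{x-1}{x+1}.
\]
For $\Xi_n$, \cref{prop:eulerian_polynomial_form} gives
\[
\Xi_n(x)=\frac{(-1)^{n+1}}{2^{4n-1}x}\,(1+x)^{2n-1}\,\frac{B_{2n-1}(t)}{(2n-1)!}.
\]
I will absorb the sign using $(-1)^{n+1}u^{2n-1}=-i\,(iu)^{2n-1}$, and absorb the powers of $2$ and of $1+x$ into a single variable $z=\lambda\,iu(1+x)$ with an explicit constant $\lambda$. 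The remaining powers of $2$ then become an overall constant, which I will track carefully. With this, $\sum_{n}\Xi_n(x)u^{2n-1}$ becomes a constant times $\frac{1}{x}$ times the odd part in $z$ of the type $B$ generating function, namely $\tfrac12\bigl(F(z)-F(-z)\bigr)$. Since $(1-t)z$ is purely imaginary, $e^{(1-t)z}$ and $e^{2(1-t)z}$ become $e^{\pm iu/2}$ and $e^{\pm iu}$ (if $\lambda$ is chosen correctly). I then combine the two fractions over the common denominator
\[
(1-te^{w})(1-te^{-w})=1+t^2-2t\cos u .
\]
Multiplying through by $(1+x)^2$ turns this into $2\bigl(1+x^2-(x^2-1)\cos u\bigr)$. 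The numerator should collapse to a multiple of $x\sin(u/2)$, and the factor $x$ cancels the prefactor $1/x$.

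The $\Lambda_n$ case is parallel. I use
\[
\Lambda_n(x)=\frac{(-1)^{n+1}}{(2^{2n+1}-1)\,x}\,(1+x)^{2n-1}\,\frac{A_{2n}(t)}{(2n)!}.
\]
The factor $2^{2n+1}-1$ is exactly cancelled by the weight in the claimed series. The sign is handled by $(-1)^{n+1}u^{2n}=-(iu)^{2n}$. Now I need the even part of the type $A$ generating function, and I must subtract the $m=0$ term $A_0=1$, since the sum starts at $n=1$. The mismatch between the exponent $2n-1$ of $(1+x)$ and the index $2n$ of $A_{2n}$ leaves one factor $(1+x)^{-1}$. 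This should be exactly what $1-t=2/(1+x)$ supplies, giving $e^{\pm 2iu}$ and denominators involving $\cos(2u)$. The numerator should reduce to $x(1-\cos 2u)$, up to the constants.

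Finally, both sides are analytic in $x$ near $0$ and $-1$ for small $|u|$: the left side is a polynomial in $x$ coefficientwise, and the right-hand denominators equal $2$ at $u=0$. So the identities extend to $x=0$ and $x=-1$ by continuity. The main obstacle I expect is bookkeeping: choosing $\lambda$ so that the exponent becomes exactly $iu/2$ (respectively $2iu$), and keeping track of the powers of $2$, the $1/x$, and the leftover $(1+x)$ factor so that the final numerator is precisely $\sin(u/2)$, respectively $1-\cos(2u)$, with no stray constant. I would sanity-check the result against $\Xi_1=\tfrac14$, $\Lambda_1=\tfrac17$, and against the values at $x=1$, where the right sides reduce to $\sin(u/2)/2$ and $(1-\cos 2u)/2$.
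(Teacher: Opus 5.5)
Your proposal is correct and follows essentially the same route as the paper. Both start from the Eulerian-polynomial form of $\Xi_n,\Lambda_n$ with $t=\frac{x-1}{x+1}$ and use the exponential generating functions of $A_m$ and $B_m$; your type $A$ form $\frac{t-1}{t-e^{(t-1)z}}$ is just the paper's $\frac{(1-t)e^{z(1-t)}}{1-te^{z(1-t)}}$ rewritten. For $\Xi_n$ you take the odd part at $z=\tfrac{i(1+x)u}{4}$, i.e.\ $\lambda=\tfrac14$, and the constants close up because $2^{-(4n-1)}((1+x)u)^{2n-1}=\tfrac12\bigl(\tfrac{(1+x)u}{4}\bigr)^{2n-1}$. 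For $\Lambda_n$ you take the even part minus $A_0=1$ at $z=i(1+x)u$. Your closing remark extending the identities to $x\in\{0,-1\}$, via polynomiality of the $u$-coefficients, is a small point the paper leaves implicit.
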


\begin{proof}
Set $\displaystyle \widehat{\Lambda}_n(x):=(2^{2n+1}-1)\Lambda_n(x)$ and $\displaystyle t:=\frac{x-1}{x+1}$. Then $\displaystyle -\frac{1-x}{1+x}=t$ and $\displaystyle 1-t=\frac{2}{1+x}$. By the defining formulae from \cref{prop:eulerian_polynomial_form},
\[
\Xi_n(x)
=
\frac{(-1)^{n+1}}{2^{4n-1}(2n-1)!}\,
\frac{(1+x)^{2n-1}}{x}\,
B_{2n-1}(t),
\]
\[
\widehat{\Lambda}_n(x)
=
\frac{(-1)^{n+1}}{(2n)!}\,
\frac{(1+x)^{2n-1}}{x}\,
A_{2n}(t).
\]

We use the standard exponential generating functions for the Eulerian polynomials of type $A$ and $B$:
\[
\sum_{m\ge0}A_m(t)\frac{z^m}{m!}
=
\frac{(1-t)e^{z(1-t)}}{1-te^{z(1-t)}},
\qquad
\sum_{m\ge0}B_m(t)\frac{z^m}{m!}
=
\frac{(1-t)e^{z(1-t)}}{1-te^{2z(1-t)}}.
\]

For the type $B$ polynomials, taking the odd part gives
\[
\sum_{n\ge1} B_{2n-1}(t)\frac{z^{2n-1}}{(2n-1)!}
=
\frac12\left(
\frac{(1-t)e^{z(1-t)}}{1-te^{2z(1-t)}}
-
\frac{(1-t)e^{-z(1-t)}}{1-te^{-2z(1-t)}}
\right).
\]
Now choose $\displaystyle z=\frac{i(1+x)u}{4}$. Since $\displaystyle 1-t=2/(1+x)$, we have $\displaystyle z(1-t)=\frac{iu}{2}$ and $\displaystyle 2z(1-t)=iu$. Moreover, $\displaystyle (-1)^{n+1}u^{2n-1}=\frac{1}{i}(iu)^{2n-1}$. Hence
\[
\sum_{n\ge1}\Xi_n(x)\,u^{2n-1}
=
\frac{1}{4ix}
\left(
\frac{(1-t)e^{iu/2}}{1-te^{iu}}
-
\frac{(1-t)e^{-iu/2}}{1-te^{-iu}}
\right).
\]
Substituting $\displaystyle t=(x-1)/(x+1)$ and simplifying yields
\[
\sum_{n\ge1}\Xi_n(x)\,u^{2n-1}
=
\frac{\sin(u/2)}{1+x^2-(x^2-1)\cos u}.
\]

For the type $A$ polynomials, taking the even part gives
\[
\sum_{n\ge0} A_{2n}(t)\frac{z^{2n}}{(2n)!}
=
\frac12\left(
\frac{(1-t)e^{z(1-t)}}{1-te^{z(1-t)}}
+
\frac{(1-t)e^{-z(1-t)}}{1-te^{-z(1-t)}}
\right).
\]
Since $\displaystyle A_0(t)=1$, subtracting the $n=0$ term gives
\[
\sum_{n\ge1} A_{2n}(t)\frac{z^{2n}}{(2n)!}
=
\frac12\left(
\frac{(1-t)e^{z(1-t)}}{1-te^{z(1-t)}}
+
\frac{(1-t)e^{-z(1-t)}}{1-te^{-z(1-t)}}
\right)-1.
\]
Now choose $\displaystyle z=i(1+x)u$. Then again $\displaystyle 1-t=2/(1+x)$, so that $\displaystyle z(1-t)=2iu$. Also, $\displaystyle (-1)^{n+1}u^{2n}=-(iu)^{2n}$. Therefore,
\[
\sum_{n\ge1}\widehat{\Lambda}_n(x)\,u^{2n}
=
-\frac{1}{x(1+x)}
\left[
\frac12\left(
\frac{(1-t)e^{2iu}}{1-te^{2iu}}
+
\frac{(1-t)e^{-2iu}}{1-te^{-2iu}}
\right)-1
\right].
\]
Substituting $\displaystyle t=(x-1)/(x+1)$ and simplifying gives
\[
\sum_{n\ge1}\widehat{\Lambda}_n(x)\,u^{2n}
=
\frac{1-\cos(2u)}{1+x^2-(x^2-1)\cos(2u)}.
\]
This proves both identities.
\end{proof}

\section{Generating functions of odd zeta and even beta values}

\begin{proposition}[Generating functions for the normalized beta and zeta values]
For \(|u|\) sufficiently small, one has
\[
\sum_{n\ge1}\frac{\beta(2n)}{\pi^{2n-1}}\,u^{2n-1}
=
\sin(u/2)\int_0^1
\frac{x\,dx}{
\sqrt{1-x^2}\,\operatorname{arctanh}x\,
\bigl(1+x^2-(x^2-1)\cos u\bigr)},
\]
\[
\sum_{n\ge1}(2^{2n+1}-1)\frac{\zeta(2n+1)}{\pi^{2n}}\,u^{2n}
=
(1-\cos 2u)\int_0^1
\frac{x\,dx}{
\operatorname{arctanh}x\,
\bigl(1+x^2-(x^2-1)\cos 2u\bigr)}.
\]
\end{proposition}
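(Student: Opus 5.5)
The plan is to multiply the integral representations \eqref{eq:beta-zeta-reps} of \cref{theorem:explicit_polynomials} by $u^{2n-1}$ (resp.\ $(2^{2n+1}-1)u^{2n}$) and sum over $n\ge1$. Then interchange summation and integration. Finally insert the closed forms of \cref{prop:generating_functions_Xi_Lambdahat}. Formally this gives
\[
\sum_{n\ge1}\frac{\beta(2n)}{\pi^{2n-1}}u^{2n-1}=\int_0^1\frac{x}{\sqrt{1-x^2}\,\operatorname{arctanh}x}\sum_{n\ge1}\Xi_n(x)u^{2n-1}\,dx ,
\]
and the inner sum equals $\sin(u/2)/(1+x^2-(x^2-1)\cos u)$. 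Pulling out the factor $\sin(u/2)$, which does not depend on $x$, yields the first identity. The zeta case is identical, using $\widehat\Lambda_n=(2^{2n+1}-1)\Lambda_n$ and the factor $1-\cos 2u$.

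The only real work is justifying the interchange for $|u|$ small, and I would do this by dominated convergence. By \cref{theorem:uniform-to-zero}, or more precisely its proof, one has $\sup_{(-1,1)}|\Xi_n|\le 2^{-2n}$ and $\sup_{(-1,1)}|\Lambda_n|\le (2^{2n+1}-1)^{-1}$. Hence
\[
\sum_n |\Xi_n(x)|\,|u|^{2n-1}\le \sum_n 2^{-2n}|u|^{2n-1},
\]
which converges for $|u|<2$. Likewise
\[
\sum_n (2^{2n+1}-1)|\Lambda_n(x)|\,|u|^{2n}\le\sum_n |u|^{2n},
\]
which converges for $|u|<1$. Both bounds are uniform in $x\in(0,1)$.

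It then suffices that the weights $\frac{x}{\sqrt{1-x^2}\operatorname{arctanh}x}$ and $\frac{x}{\operatorname{arctanh}x}$ are integrable on $(0,1)$. Near $0$ one has $x/\operatorname{arctanh}x\to1$. Near $1$, $\operatorname{arctanh}x\to\infty$, so $x/\operatorname{arctanh}x$ is bounded. In the beta case the remaining factor $(1-x^2)^{-1/2}$ is integrable. Thus both weights lie in $L^1(0,1)$, and Tonelli/Fubini permits swapping sum and integral.

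I also need the denominators $1+x^2-(x^2-1)\cos u$ to be harmless. For real small $u$ they equal $2+(x^2-1)(1-\cos u)\ge 1+\cos u>0$ on $[0,1]$, so the resulting integrands are bounded multiples of the $L^1$ weights. This confirms the right-hand sides are finite, although dominated convergence already guarantees it. The main (mild) obstacle is this uniform domination, and it is settled by the sup bounds from \cref{theorem:uniform-to-zero}. For complex $u$ one would restrict to a small disc where the denominator stays away from $0$, but real small $u$ suffices, and analyticity then extends the identity.
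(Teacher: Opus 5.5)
Your proposal is correct and follows the paper's own route: sum the integral representations of \cref{theorem:explicit_polynomials} against $u^{2n-1}$ (resp.\ $(2^{2n+1}-1)u^{2n}$), interchange sum and integral, and insert the closed forms of \cref{prop:generating_functions_Xi_Lambdahat}. The paper performs the interchange without comment. Your domination argument supplies that missing justification: it uses the bounds $\sup_{(-1,1)}|\Xi_n|\le 2^{-2n}$ and $(2^{2n+1}-1)\sup_{(-1,1)}|\Lambda_n|\le 1$ from the proof of \cref{theorem:uniform-to-zero} (trivially valid also for $n=1$), together with the integrability of the weights, so it adds rigor without changing the method.
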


\begin{proof}
By the integral representations
\[
\frac{\beta(2n)}{\pi^{2n-1}}
=
\int_0^1
\frac{x\,\Xi_n(x)}{\sqrt{1-x^2}\,\operatorname{arctanh}x}\,dx,
\qquad
\frac{\zeta(2n+1)}{\pi^{2n}}
=
\int_0^1
\frac{x\,\Lambda_n(x)}{\operatorname{arctanh}x}\,dx,
\]
we obtain, for \(|u|\) sufficiently small,
\[
\sum_{n\ge1}\frac{\beta(2n)}{\pi^{2n-1}}\,u^{2n-1}
=
\int_0^1
\frac{x}{\sqrt{1-x^2}\,\operatorname{arctanh}x}
\sum_{n\ge1}\Xi_n(x)\,u^{2n-1}\,dx,
\]
\[
\sum_{n\ge1}(2^{2n+1}-1)\frac{\zeta(2n+1)}{\pi^{2n}}\,u^{2n}
=
\int_0^1
\frac{x}{\operatorname{arctanh}x}
\sum_{n\ge1}(2^{2n+1}-1)\Lambda_n(x)\,u^{2n}\,dx.
\]
Now apply the generating functions of \cref{prop:generating_functions_Xi_Lambdahat}
\[
\sum_{n\ge1}\Xi_n(x)\,u^{2n-1}
=
\frac{\sin(u/2)}{1+x^2-(x^2-1)\cos u},
\]
\[
\sum_{n\ge1}(2^{2n+1}-1)\Lambda_n(x)\,u^{2n}
=
\frac{1-\cos 2u}{1+x^2-(x^2-1)\cos 2u}.
\]
Substituting these identities into the two previous formulae gives
\[
\sum_{n\ge1}\frac{\beta(2n)}{\pi^{2n-1}}\,u^{2n-1}
=
\sin(u/2)\int_0^1
\frac{x\,dx}{
\sqrt{1-x^2}\,\operatorname{arctanh}x\,
\bigl(1+x^2-(x^2-1)\cos u\bigr)},
\]
\[
\sum_{n\ge1}(2^{2n+1}-1)\frac{\zeta(2n+1)}{\pi^{2n}}\,u^{2n}
=
(1-\cos 2u)\int_0^1
\frac{x\,dx}{
\operatorname{arctanh}x\,
\bigl(1+x^2-(x^2-1)\cos 2u\bigr)}.
\]
This proves the claim.
\end{proof}

\begin{corollary}
For \(|u|\) sufficiently small, one has
\[
\sum_{n\ge1}\beta(2n)\,u^{2n-1}
=
\sin\!\left(\frac{\pi u}{2}\right)\int_0^1
\frac{x\,dx}{
\sqrt{1-x^2}\,\operatorname{arctanh}x\,
\bigl(1+x^2-(x^2-1)\cos (\pi u)\bigr)},
\]
\[
\sum_{n\ge1}(2^{2n+1}-1)\zeta(2n+1)\,u^{2n}
=
(1-\cos 2\pi u)\int_0^1
\frac{x\,dx}{
\operatorname{arctanh}x\,
\bigl(1+x^2-(x^2-1)\cos 2\pi u\bigr)}.
\]
\end{corollary}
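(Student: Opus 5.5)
The plan is to derive the corollary from the preceding proposition by the substitution $u\mapsto \pi u$. Both identities there hold for $|u|$ small. For such $u$, replacing $u$ by $\pi u$ (still small) turns the left-hand sides into
\[
\sum_{n\ge1}\frac{\beta(2n)}{\pi^{2n-1}}\,(\pi u)^{2n-1}=\sum_{n\ge1}\beta(2n)\,u^{2n-1},
\qquad
\sum_{n\ge1}(2^{2n+1}-1)\frac{\zeta(2n+1)}{\pi^{2n}}\,(\pi u)^{2n}=\sum_{n\ge1}(2^{2n+1}-1)\zeta(2n+1)\,u^{2n}.
\]
The right-hand sides become exactly the displayed expressions, with $\sin(\pi u/2)$, $\cos(\pi u)$, $1-\cos(2\pi u)$ and $\cos(2\pi u)$ in place of their unscaled counterparts.

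The powers of $\pi$ cancel exactly because the exponent of $\pi$ in each normalisation, $2n-1$ and $2n$ respectively, matches the exponent of $u$. This bookkeeping is trivial to verify.

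The only point needing care is the meaning of ``$|u|$ sufficiently small''. If the proposition holds for $|u|<r$, the corollary holds for $|u|<r/\pi$. One should also confirm that the interchange of sum and integral used in the proposition remains legitimate there; this is inherited directly, since it is the same statement after a rescaling.

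I would also remark that both series on the left actually converge for $|u|<1$, because $\beta(2n)$ and $\zeta(2n+1)$ are bounded. The integrals on the right are well defined as long as $1+x^2-(x^2-1)\cos(\pi u)$ does not vanish on $[0,1]$. This holds since $1+x^2-(x^2-1)\cos\theta\ge 1+x^2-(1-x^2)=2x^2>0$ for $x\in(0,1]$, and at $x=0$ the expression equals $1+\cos\theta>0$ for $|\theta|<\pi$. So the range can be extended by analytic continuation, though the statement as given only requires small $|u|$.

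I expect no real obstacle. The main thing to check is that no stray power of $\pi$ remains after the substitution.
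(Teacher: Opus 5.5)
Your proposal is correct and is exactly the paper's argument: the corollary follows from the preceding proposition by the substitution $u\mapsto \pi u$, and the powers of $\pi$ cancel as you describe. One minor slip in a side remark, which does not affect the proof: because of the factor $2^{2n+1}-1$, the $\zeta$-series $\sum_{n\ge1}(2^{2n+1}-1)\zeta(2n+1)u^{2n}$ has radius of convergence $1/2$, not $1$; only the $\beta$-series converges for all $|u|<1$.
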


\begin{proof}
This follows immediately from the preceding proposition by replacing \(u\) with \(\pi u\).
\end{proof}

While generating functions for special zeta-value sequences are classical, we have not found these specific integral representations for the renormalized odd zeta values and even beta values in the literature.

\section{Some particular weighted integrals involving the polynomials}

\begin{proposition}\label[proposition]{prop:explicit_formulae_selected_weights}
For every integer \(n\ge 1\), the following identities hold:
\[
\newcommand{\Tstrut}{\rule[-1.15em]{0pt}{3.4em}}
\renewcommand{\arraystretch}{1.15}
\begin{array}{|c|c|c|}
\hline
\Tstrut
w(x)
&
\displaystyle \int_0^1 w(x)\,\Xi_n(x)\,dx
&
\displaystyle \int_0^1 w(x)\,\Lambda_n(x)\,dx
\\ \hline
\Tstrut
1
&
\displaystyle \frac{(-1)^{n-1}E_{2n-2}}{2^{2n}(2n-2)!}
&
\displaystyle \frac{2^{2n}(2^{2n}-1)(-1)^{n+1}B_{2n}}{(2^{2n+1}-1)(2n)!}
\\ \hline
\Tstrut
x^2
&
\displaystyle \frac{(-1)^{n+1}(2^{2n-1}-1)B_{2n}}{2^{2n-1}(2n-1)!}
&
\displaystyle \frac{(-1)^{n-1}2^{2n}B_{2n}}{(2^{2n+1}-1)(2n)!}
\\ \hline
\Tstrut
\sqrt{1-x^2}
&
\displaystyle \frac{\pi(2^{2n-1}-1)(2^{2n}-1)(-1)^{n+1}B_{2n}}{2^{2n}(2n)!}
&
\displaystyle \frac{\pi(-1)^nE_{2n}}{2(2^{2n+1}-1)(2n)!}
\\ \hline
\Tstrut
\displaystyle \frac{1}{\sqrt{1-x^2}}
&
\displaystyle \pi\,\frac{2^{2n}-1}{2}\,\frac{(-1)^{n+1}B_{2n}}{(2n)!}
&
\displaystyle \frac{\pi(-1)^n(E_{2n}-1)}{2(2^{2n+1}-1)(2n)!}
\\ \hline
\Tstrut
x^2\sqrt{1-x^2}
&
\displaystyle \frac{\pi n(2^{2n+2}-1)(-1)^nB_{2n+2}}{2^{2n+1}(n+1)(2n)!}
&
\displaystyle \frac{\pi(2^{2n+2}-1)(-1)^nB_{2n+2}}{2(2^{2n+1}-1)(n+1)(2n)!}
\\ \hline
\Tstrut
\displaystyle \frac{x^2}{\sqrt{1-x^2}}
&
\displaystyle \frac{\pi(2^{2n}-1)(-1)^{n+1}B_{2n}}{2^{2n}(2n)!}
&
\displaystyle \frac{\pi(-1)^{n+1}}{2(2^{2n+1}-1)(2n)!}
\\ \hline
\end{array}
\]
\end{proposition}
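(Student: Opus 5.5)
The plan is to work entirely with the generating functions of \cref{prop:generating_functions_Xi_Lambdahat}. Set
\[
F(x,u):=\frac{\sin(u/2)}{1+x^2-(x^2-1)\cos u},\qquad G(x,u):=\frac{1-\cos 2u}{1+x^2-(x^2-1)\cos 2u}.
\]
For each weight $w$ in the table we compute
\[
\int_0^1 w(x)F(x,u)\,dx \qquad\text{and}\qquad \int_0^1 w(x)G(x,u)\,dx
\]
in closed form as functions of $u$, expand in powers of $u$, and read off the coefficient of $u^{2n-1}$ (respectively $u^{2n}$, dividing by $2^{2n+1}-1$). Interchanging sum and integral is legitimate for small $|u|$. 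Indeed, $1+x^2-(x^2-1)\cos u = 2\cos^2(u/2)+2x^2\sin^2(u/2)$ is bounded below by a positive constant uniformly in $x\in[0,1]$ for $|u|<\pi/2$, so the series converges uniformly on $[0,1]$.

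The key simplification is to write the denominator as
\[
1+x^2-(x^2-1)\cos u = 2\cos^2(u/2)\bigl(1+x^2\tan^2(u/2)\bigr).
\]
Put $s=\tan(u/2)$ for $F$ and $s=\tan u$ for $G$. Every integral then reduces to one of the standard families
\[
\int_0^1\frac{w(x)\,dx}{1+s^2x^2}, \qquad w\in\Bigl\{1,\;x^2,\;\sqrt{1-x^2},\;\tfrac{1}{\sqrt{1-x^2}},\;x^2\sqrt{1-x^2},\;\tfrac{x^2}{\sqrt{1-x^2}}\Bigr\}.
\]
These are elementary:
\[
\int_0^1\frac{dx}{1+s^2x^2}=\frac{\arctan s}{s},\qquad \int_0^1\frac{dx}{\sqrt{1-x^2}\,(1+s^2x^2)}=\frac{\pi}{2\sqrt{1+s^2}},
\]
and the remaining four follow from these by writing $x^2=\bigl((1+s^2x^2)-1\bigr)/s^2$ and $1-x^2=\bigl(s^2+1-(1+s^2x^2)\bigr)/s^2$. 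For example,
\[
\int_0^1\frac{\sqrt{1-x^2}}{1+s^2x^2}\,dx=\frac{\pi}{2s^2}\bigl(\sqrt{1+s^2}-1\bigr).
\]
With $s=\tan(u/2)$ we have $\arctan s=u/2$ and $\sqrt{1+s^2}=\sec(u/2)$, so each $F$-integral collapses to a trigonometric expression in $u$. For instance,
\[
\int_0^1 F\,dx=\frac{\sin(u/2)}{2\cos^2(u/2)}\cdot\frac{u/2}{\tan(u/2)}=\frac{u}{4}\sec(u/2),
\]
and the $1/\sqrt{1-x^2}$ weight gives $\tfrac{\pi}{4}\tan(u/2)$. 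Similarly, for $G$ with $s=\tan u$ we get expressions such as $u\tan u$ and $\tfrac{\pi}{2}(\sec u-1)$, and so on.

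The last step is to expand these elementary functions using the series already employed in the proof of \cref{prop:XiLambda-at-zero}:
\[
\sec z=\sum_{m\ge0}\frac{(-1)^mE_{2m}}{(2m)!}\,z^{2m},\qquad \tan z=\sum_{m\ge1}\frac{(-1)^{m-1}2^{2m}\bigl(2^{2m}-1\bigr)}{(2m)!}\,B_{2m}\,z^{2m-1},
\]
together with $z\cot z$, $\tan^2$ (via \eqref{eq:series_of_tan_square}) and $\sin$, $\cos$. Comparing coefficients then yields each table entry. For instance, $\tfrac{u}{4}\sec(u/2)$ gives $\dfrac{(-1)^{n-1}E_{2n-2}}{2^{2n}(2n-2)!}$, matching the first entry.

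The main obstacle is purely bookkeeping. For the weights involving $x^2$ or $\sqrt{1-x^2}$ one has to divide by $s^2=\tan^2(u/2)$, and the resulting expressions, e.g.\ $\cot^2(u/2)\bigl(\sec(u/2)-1\bigr)\sin(u/2)\cos^{-2}(u/2)$, must be simplified correctly. After simplification they combine into $\tan$, $\cot$, $\csc$ or $u$-multiples of these, and identifying the right Bernoulli/Euler series requires care with the normalizations $2^{2n}$ and $2^{2n-1}-1$. The coefficient of $\csc$ is the one that produces the factor $2^{2n-1}-1$. I would handle each of the twelve entries separately, checking each against $n=1$ as a sanity test. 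For example, since $\Xi_1=1/4$, the $1/\sqrt{1-x^2}$ entry must equal $\pi/8$, and $\pi\cdot\tfrac{3}{2}\cdot\tfrac{B_2}{2}=\pi/8$ confirms it.
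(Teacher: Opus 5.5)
Your proposal is correct and follows essentially the same route as the paper: the paper also integrates the generating functions of \cref{prop:generating_functions_Xi_Lambdahat} against each weight through the kernel $K_w(\phi)=\int_0^1 w(x)\,dx/\bigl((1+\cos\phi)+(1-\cos\phi)x^2\bigr)$, which your substitution $s=\tan(u/2)$ (resp.\ $s=\tan u$) simply normalizes; it reduces the $x^2$-weights by the same partial-fraction identity you use, and reads off coefficients from the series of $\sec$, $\tan$, $\csc$, $z\cot z$ and $\tan^2$. The one point to tighten is the interchange of sum and integral: positivity of the denominator for real $u$ does not by itself control the Taylor coefficients, so instead bound it on a small complex disk $|u|\le r$, where $\bigl|2\cos^2(u/2)+2x^2\sin^2(u/2)\bigr|\ge 2|\cos(u/2)|^2-2|\sin(u/2)|^2>0$ uniformly in $x\in[0,1]$, and deduce uniform convergence on $[0,1]$ from Cauchy's estimates.
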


\begin{proof}
For a fixed weight \(w\), set
\[
I_n^{(B)}(w):=\int_0^1 w(x)\,\Xi_n(x)\,dx,
\qquad
I_n^{(A)}(w):=\int_0^1 w(x)\,\Lambda_n(x)\,dx.
\]
Using \cref{prop:generating_functions_Xi_Lambdahat}, for \(|u|\) sufficiently small we obtain
\[
\sum_{n\ge1} I_n^{(B)}(w)\,u^{2n-1}
=
\sin(u/2)\,K_w(u),
\]
\[
\sum_{n\ge1}(2^{2n+1}-1)I_n^{(A)}(w)\,u^{2n}
=
(1-\cos 2u)\,K_w(2u),
\]
where
\[
K_w(\phi):=
\int_0^1 \frac{w(x)\,dx}{1+x^2-(x^2-1)\cos\phi}
=
\int_0^1 \frac{w(x)\,dx}{(1+\cos\phi)+(1-\cos\phi)x^2}.
\]

Thus everything reduces to evaluating \(K_w(\phi)\). For the six weights in the statement one finds, by elementary substitutions and partial fraction manipulations, that
\[
\renewcommand{\arraystretch}{1.5}
\begin{array}{|c|c|}
\hline
w(x) & K_w(\phi) \\ \hline
1
&
\displaystyle \frac{\phi}{2\sin\phi}
\\ \hline
x^2
&
\displaystyle
\frac{1}{1-\cos\phi}
-
\frac{1+\cos\phi}{1-\cos\phi}\,\frac{\phi}{2\sin\phi}
\\ \hline
\sqrt{1-x^2}
&
\displaystyle
\frac{\pi}{2(1-\cos\phi)}
\left(\frac{2}{\sqrt{2(1+\cos\phi)}}-1\right)
\\ \hline
\displaystyle \frac{1}{\sqrt{1-x^2}}
&
\displaystyle \frac{\pi}{4\cos(\phi/2)}
\\ \hline
x^2\sqrt{1-x^2}
&
\displaystyle \frac{\pi}{32}\sec^4(\phi/4)
\\ \hline
\displaystyle \frac{x^2}{\sqrt{1-x^2}}
&
\displaystyle \frac{\pi}{8}\sec^2(\phi/4)
\\ \hline
\end{array}
\]
For example, the cases \(w(x)=1/\sqrt{1-x^2}\) and \(w(x)=x^2/\sqrt{1-x^2}\) follow from the substitution \(x=\sin\theta\), while the cases \(w(x)=x^2\) and \(w(x)=x^2\sqrt{1-x^2}\) are obtained from the corresponding \(w(x)=1\) and \(w(x)=\sqrt{1-x^2}\) cases by the identity
\[
\frac{x^2}{a+bx^2}=\frac1b-\frac{a}{b(a+bx^2)}.
\]

Substituting these evaluations into the generating series gives
\[
\renewcommand{\arraystretch}{1.5}
\begin{array}{|c|c|c|}
\hline
w(x)
&
\displaystyle \sum_{n\ge1} I_n^{(B)}(w)\,u^{2n-1}
&
\displaystyle \sum_{n\ge1}(2^{2n+1}-1)I_n^{(A)}(w)\,u^{2n}
\\ \hline
1
&
\displaystyle \frac{u}{4}\sec(u/2)
&
\displaystyle u\tan u
\\ \hline
x^2
&
\displaystyle \frac12\csc(u/2)-\frac{u}{4}\csc(u/2)\cot(u/2)
&
\displaystyle 1-u\cot u
\\ \hline
\sqrt{1-x^2}
&
\displaystyle \frac{\pi}{2}\csc u-\frac{\pi}{4}\csc(u/2)
&
\displaystyle \frac{\pi}{2}(\sec u-1)
\\ \hline
\displaystyle \frac{1}{\sqrt{1-x^2}}
&
\displaystyle \frac{\pi}{4}\tan(u/2)
&
\displaystyle \frac{\pi}{2}(\sec u-\cos u)
\\ \hline
x^2\sqrt{1-x^2}
&
\displaystyle \frac{\pi}{16}\tan(u/4)\sec^2(u/4)
&
\displaystyle \frac{\pi}{4}\tan^2(u/2)
\\ \hline
\displaystyle \frac{x^2}{\sqrt{1-x^2}}
&
\displaystyle \frac{\pi}{4}\tan(u/4)
&
\displaystyle \frac{\pi}{2}(1-\cos u)
\\ \hline
\end{array}
\]

The only non-immediate cases are the left-hand generating function for \(w(x)=x^2\), and both generating functions for \(w(x)=x^2\sqrt{1-x^2}\).

\vspace{0.2cm}

For \(w(x)=x^2\), we rewrite
\[
\frac12\csc(u/2)-\frac{u}{4}\csc(u/2)\cot(u/2)
=
\frac12\frac{d}{dz}\bigl(z\csc z\bigr)\Big|_{z=u/2}.
\]
Since
\[
\csc z
=
\sum_{n\ge0}\frac{(-1)^{n+1}2(2^{2n-1}-1)B_{2n}}{(2n)!}\,z^{2n-1},
\]
it follows that
\[
\frac12\frac{d}{dz}\bigl(z\csc z\bigr)
=
\sum_{n\ge1}
\frac{(-1)^{n+1}(2^{2n-1}-1)B_{2n}}{(2n-1)!}\,z^{2n-1},
\]
and substituting \(z=u/2\) gives the required series.

\vspace{0.2cm}

For \(w(x)=x^2\sqrt{1-x^2}\), we first of all read from \eqref{eq:series_of_tan_square} this series
\[
\tan^2 z
=
\sum_{n\ge1}
\frac{2^{2n+1}(2^{2n+2}-1)(-1)^nB_{2n+2}}{(n+1)(2n)!}\,z^{2n}.
\]

Furthermore, we use
\[
\frac{\pi}{16}\tan(u/4)\sec^2(u/4)
=
\frac{\pi}{8}\frac{d}{du}\!\left(\tan^2\frac{u}{4}\right).
\] and the series of $\tan^2 z$.

Likewise, the right-hand generating function
\[
\frac{\pi}{4}\tan^2(u/2)
\]
is obtained directly from the same expansion of \(\tan^2 z\) upon substituting \(z=u/2\).

All remaining coefficient extractions follow directly from the classical Taylor expansions of
\[
\sec z,\qquad \tan z,\qquad \csc z,\qquad z\cot z.
\]
\end{proof}

\part{Asymptotic zero distribution of the polynomials $\widetilde{\Xi}_n$ and $\widetilde{\Lambda}_n$}\label[part]{part:asymptotic_zero_distribution}

\section{Sums of roots of the polynomials}

\begin{proposition}\label{prop:sum-of-roots-tilde}
Let
\[
\widetilde{\Xi}_n(y):=\Xi_n(\sqrt y),
\qquad
\widetilde{\Lambda}_n(y):=\Lambda_n(\sqrt y).
\]
Then the sums of the zeros of \(\widetilde{\Xi}_n\) and \(\widetilde{\Lambda}_n\) are given by
\[
\sum_{\widetilde{\Xi}_n(\rho)=0}\rho=\frac{4n-3}{6},
\qquad
\sum_{\widetilde{\Lambda}_n(\rho)=0}\rho=\frac{2(n-1)}{3}.
\]
\end{proposition}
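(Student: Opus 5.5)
Vieta's formula does the work here: the sum of the zeros of $\widetilde{\Xi}_n$ (resp.\ $\widetilde{\Lambda}_n$) is minus the ratio of its subleading to its leading coefficient. Since $\widetilde{\Xi}_n(y)=\Xi_n(\sqrt y)$ and $\Xi_n(x)=\frac{(-1)^{n+1}}{2^{4n-2}(2n-1)!}\sum_{t=0}^{n-1}\mathcal C^{(B)}_{n,t}x^{2t}$, we have $\widetilde{\Xi}_n(y)=\frac{(-1)^{n+1}}{2^{4n-2}(2n-1)!}\sum_{t=0}^{n-1}\mathcal C^{(B)}_{n,t}y^{t}$. This is a polynomial of exact degree $n-1$ by \cref{cor:exact_degree}, and the same holds for $\widetilde{\Lambda}_n$ with the coefficients $\mathcal C^{(A)}_{n,t}$. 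The normalizing constants cancel in the ratio, so
\[
\sum_{\widetilde{\Xi}_n(\rho)=0}\rho=-\frac{\mathcal C^{(B)}_{n,n-2}}{\mathcal C^{(B)}_{n,n-1}},
\qquad
\sum_{\widetilde{\Lambda}_n(\rho)=0}\rho=-\frac{\mathcal C^{(A)}_{n,n-2}}{\mathcal C^{(A)}_{n,n-1}},
\]
with zeros counted with multiplicity. For $n\ge2$ they are in fact simple and lie in $(0,1)$ by \cref{theorem:real-rootedness}, though that is not needed for Vieta.

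Next I insert the two coefficient evaluations already available. \Cref{prop:leading_coefficients} gives $\mathcal C^{(B)}_{n,n-1}=2^{2n-2}(2n-1)!$ and $\mathcal C^{(A)}_{n,n-1}=(2n)!/2$. \Cref{prop:Cn-n-2} gives $\mathcal C^{(B)}_{n,n-2}=-\frac{4n-3}{3}2^{2n-3}(2n-1)!$ and $\mathcal C^{(A)}_{n,n-2}=-\frac{n-1}{3}(2n)!$. Dividing,
\[
-\frac{\mathcal C^{(B)}_{n,n-2}}{\mathcal C^{(B)}_{n,n-1}}=\frac{4n-3}{3}\cdot\frac{2^{2n-3}}{2^{2n-2}}=\frac{4n-3}{6},
\qquad
-\frac{\mathcal C^{(A)}_{n,n-2}}{\mathcal C^{(A)}_{n,n-1}}=\frac{n-1}{3}\cdot 2=\frac{2(n-1)}{3},
\]
which are exactly the claimed values.

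The only point needing care is the range of $n$, since \cref{prop:Cn-n-2} is stated for $n\ge2$. For $n=1$ the polynomials are the constants $\Xi_1\equiv\frac14$ and $\Lambda_1\equiv\frac17$, so there are no zeros and the empty sum is $0$. This matches $2(n-1)/3=0$ but not $(4n-3)/6=1/6$, so the $\widetilde{\Xi}_n$ formula requires $n\ge 2$; I would state this restriction explicitly. Beyond that, the argument is a direct combination of earlier results, and the only genuine difficulty, evaluating the second coefficient, was already handled in \cref{prop:Cn-n-2} via the Euler--Frobenius moments.
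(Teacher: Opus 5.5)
Your proposal is correct and is essentially the paper's proof: Vieta's formula applied to \(\widetilde{\Xi}_n\) and \(\widetilde{\Lambda}_n\), with \(\mathcal C^{(A)}_{n,t}\) and \(\mathcal C^{(B)}_{n,t}\) for \(t=n-1,n-2\) taken from the leading-coefficient and second-coefficient propositions. You are also right that the hypothesis \(n\ge 2\) is needed and is missing from the paper's statement: for \(n=1\), \(\widetilde{\Xi}_1\equiv\frac14\) has no zeros, so the empty sum \(0\) differs from \(\frac{4n-3}{6}=\frac16\), while the \(\widetilde{\Lambda}_n\) formula gives \(0\) and so happens to hold.
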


\begin{proof}
By \cref{theorem:explicit_polynomials}, we have
\[
\widetilde{\Xi}_n(y)
=
\frac{(-1)^{n+1}}{2^{4n-2}(2n-1)!}
\sum_{t=0}^{n-1}\mathcal C^{(B)}_{n,t}\,y^t,
\qquad
\widetilde{\Lambda}_n(y)
=
\frac{2(-1)^{n+1}}{(2^{2n+1}-1)(2n)!}
\sum_{t=0}^{n-1}\mathcal C^{(A)}_{n,t}\,y^t.
\]
Hence, by Viète's formula,
\[
\sum_{\widetilde{\Xi}_n(\rho)=0}\rho
=
-\frac{\mathcal C^{(B)}_{n,n-2}}{\mathcal C^{(B)}_{n,n-1}},
\qquad
\sum_{\widetilde{\Lambda}_n(\rho)=0}\rho
=
-\frac{\mathcal C^{(A)}_{n,n-2}}{\mathcal C^{(A)}_{n,n-1}}.
\]

Now \cref{prop:Cn-n-2} gives $\displaystyle \mathcal C^{(B)}_{n,n-2}=-\frac{4n-3}{3}\,2^{2n-3}(2n-1)!$ and $\displaystyle \mathcal C^{(A)}_{n,n-2}=-\frac{n-1}{3}(2n)!$. Moreover, by \cref{prop:leading_coefficients}, we have $\displaystyle [x^{2n-2}]\Xi_n(x)=\frac{(-1)^{n+1}}{2^{2n}}$ and $\displaystyle [x^{2n-2}]\Lambda_n(x)=\frac{(-1)^{n+1}}{2^{2n+1}-1}$.

Comparing these leading coefficients with the formulae above from \cref{theorem:explicit_polynomials}, we obtain $\displaystyle \mathcal C^{(B)}_{n,n-1}=2^{2n-2}(2n-1)!$ and $\displaystyle \mathcal C^{(A)}_{n,n-1}=\frac{(2n)!}{2}$. Therefore
\[
\sum_{\widetilde{\Xi}_n(\rho)=0}\rho
=
-\frac{-\frac{4n-3}{3}\,2^{2n-3}(2n-1)!}{2^{2n-2}(2n-1)!}
=
\frac{4n-3}{6},
\]
\[
\sum_{\widetilde{\Lambda}_n(\rho)=0}\rho
=
-\frac{-\frac{n-1}{3}(2n)!}{(2n)!/2}
=
\frac{2(n-1)}{3}.
\]
This proves the claim.
\end{proof}

\begin{remark}
It is worth noting that, upon replacing \(n\) by a real variable \(x\), the two
root-sum formulae become the affine functions
\(\displaystyle
\frac{4x-3}{6}
\qquad\text{and}\qquad
\frac{2(x-1)}{3},
\)
which are parallel, since both have slope \(2/3\); indeed, their difference is
identically \(1/6\).
\end{remark}

\section{Log derivativatives of the polynomials}

\begin{lemma}\label[lemma]{lem:log-derivative-Xi-Lambda}
Let $n\in\mathbb{N}$ and $0<x<1$. Set
\[
y(x):=-\,\frac{1-\sqrt{x}}{1+\sqrt{x}}.
\]
Then, for every $x$ with $B_{2n-1}(y(x))\neq0$,
\[
\frac{\widetilde{\Xi}_n'(x)}{\widetilde{\Xi}_n(x)}
=
\frac{2n-1}{2\sqrt{x}\,(1+\sqrt{x})}
-\frac{1}{2x}
-\frac{1}{4\sqrt{x}\,(1-\sqrt{x})}
\left[
  \frac{B_{2n}\!\bigl(y(x)\bigr)}{B_{2n-1}\!\bigl(y(x)\bigr)}
  -\bigl((4n-1)\,y(x)+1\bigr)
\right].
\]
Likewise, for every $x$ with $A_{2n}(y(x))\neq0$,
\[
\frac{\widetilde{\Lambda}_n'(x)}{\widetilde{\Lambda}_n(x)}
=
\frac{2n-1}{2\sqrt{x}\,(1+\sqrt{x})}
-\frac{1}{2x}
-\frac{1}{2\sqrt{x}\,(1-\sqrt{x})}
\left[
  \frac{A_{2n+1}\!\bigl(y(x)\bigr)}{A_{2n}\!\bigl(y(x)\bigr)}
  -\bigl(2n\,y(x)+1\bigr)
\right].
\]
\end{lemma}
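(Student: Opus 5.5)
Set $s:=\sqrt{x}$ and $y=y(x)=-\frac{1-s}{1+s}$. By \cref{prop:eulerian_polynomial_form},
\[
\widetilde{\Xi}_n(x)=d_n\,\frac{(1+s)^{2n-1}}{s}\,B_{2n-1}(y),\qquad
\widetilde{\Lambda}_n(x)=c_n\,\frac{(1+s)^{2n-1}}{s}\,A_{2n}(y),
\]
with nonzero constants $c_n,d_n$. The plan is to take the logarithmic derivative in $x$ termwise. Using $ds/dx=\frac{1}{2s}$, the prefactor $(1+s)^{2n-1}$ contributes $\frac{2n-1}{2s(1+s)}$, and $s^{-1}$ contributes $-\frac{1}{2x}$. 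These are the first two terms in both formulas. Next, $\frac{dy}{ds}=\frac{2}{(1+s)^2}$, so $\frac{dy}{dx}=\frac{1}{s(1+s)^2}$. The remaining term is therefore
\[
\frac{1}{s(1+s)^2}\cdot\frac{B_{2n-1}'(y)}{B_{2n-1}(y)},
\]
and similarly with $A_{2n}$.

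To remove the derivative, I will use the classical Eulerian recurrences:
\[
A_{m+1}(t)=(1+mt)A_m(t)+t(1-t)A_m'(t),\qquad
B_{m+1}(t)=(1+(2m+1)t)B_m(t)+2t(1-t)B_m'(t).
\]
Both follow from the exponential generating functions recalled in the proof of \cref{prop:generating_functions_Xi_Lambdahat}; equivalently, apply $t\frac{d}{dt}$ to $\sum(k+1)^m t^k=A_m/(1-t)^{m+1}$ and $\sum(2k+1)^m t^k=B_m/(1-t)^{m+1}$. With $m=2n$ and $m=2n-1$ respectively, these give
\[
\frac{A_{2n}'}{A_{2n}}=\frac{1}{y(1-y)}\Big[\frac{A_{2n+1}}{A_{2n}}-(2ny+1)\Big],\qquad
\frac{B_{2n-1}'}{B_{2n-1}}=\frac{1}{2y(1-y)}\Big[\frac{B_{2n}}{B_{2n-1}}-((4n-1)y+1)\Big].
\]
This is exactly where the brackets in the statement come from, including the factor $4n-1=2(2n-1)+1$.

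What remains is the prefactor. We have $1-y=\frac{2}{1+s}$ and $y=-\frac{1-s}{1+s}$, so $y(1-y)=-\frac{2(1-s)}{(1+s)^2}$. Hence
\[
\frac{1}{s(1+s)^2}\cdot\frac{1}{y(1-y)}=-\frac{1}{2s(1-s)},
\]
which is the coefficient in the $\Lambda$ formula. The extra $\frac12$ from the type-$B$ recurrence gives $-\frac{1}{4s(1-s)}$, matching the $\Xi$ formula.

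The hypothesis $B_{2n-1}(y)\neq0$ (resp.\ $A_{2n}(y)\neq0$) is needed only to divide. The main point requiring care is the exact normalization of the type-$B$ recurrence (the factor $2t(1-t)$ and the coefficient $2m+1$). I would check it directly from the generating function $(1-t)^{m+1}\sum(2k+1)^mt^k$ before relying on it, and confirm the whole computation at $n=1$ as a sanity check.
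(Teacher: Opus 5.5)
Your proposal is correct and follows the paper's proof step for step. It writes $\widetilde{\Xi}_n$ and $\widetilde{\Lambda}_n$ via \cref{prop:eulerian_polynomial_form}, applies the chain rule with $y'(x)=\frac{1}{\sqrt{x}(1+\sqrt{x})^2}$, solves the type-$A$ and type-$B$ recurrences for $P'/P$, and uses $y(1-y)=-\frac{2(1-\sqrt{x})}{(1+\sqrt{x})^2}$; the recurrences you state have the correct normalizations, though they come from applying $\frac{d}{dt}\circ t$ (type $A$) and $2t\frac{d}{dt}+1$ (type $B$) to the series, not $t\frac{d}{dt}$ alone.
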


\begin{proof}
By \cref{prop:eulerian_polynomial_form},
\[
\widetilde{\Xi}_n(x)
=
C_n\frac{(1+\sqrt{x})^{2n-1}}{\sqrt{x}}\,B_{2n-1}\!\bigl(y(x)\bigr),
\qquad
\widetilde{\Lambda}_n(x)
=
D_n\frac{(1+\sqrt{x})^{2n-1}}{\sqrt{x}}\,A_{2n}\!\bigl(y(x)\bigr),
\]
where
\[
C_n:=\frac{(-1)^{n+1}}{2^{4n-1}(2n-1)!},
\qquad
D_n:=\frac{(-1)^{n+1}}{(2^{2n+1}-1)(2n)!}.
\]
Also,
\[
y'(x)=\frac{1}{\sqrt{x}(1+\sqrt{x})^2}.
\]

Hence, for
\(\displaystyle
P_n(z):=B_{2n-1}(z)\quad\text{or}\quad P_n(z):=A_{2n}(z),
\)
we have
\[
\frac{d}{dx}\log\!\left(
\frac{(1+\sqrt{x})^{2n-1}}{\sqrt{x}}\,P_n(y(x))
\right)
=
\frac{2n-1}{2\sqrt{x}(1+\sqrt{x})}
-\frac{1}{2x}
+
\frac{1}{\sqrt{x}(1+\sqrt{x})^2}\,
\frac{P_n'(y(x))}{P_n(y(x))}.
\]

It remains to use the recursive formulae for the Eulerian polynomials.

For the type $B$ Eulerian polynomials, the recursive formula is
\[
B_m(z)=\bigl((2m-1)z+1\bigr)B_{m-1}(z)+2z(1-z)B_{m-1}'(z).
\]
After replacing $m$ by $m+1$ and solving for $B_m'(z)/B_m(z)$, we get
\[
\frac{B_m'(z)}{B_m(z)}
=
\frac{1}{2z(1-z)}
\left(
\frac{B_{m+1}(z)}{B_m(z)}-\bigl((2m+1)z+1\bigr)
\right).
\]
With $m=2n-1$ and $z=y(x)$,
\[
\frac{B_{2n-1}'(y(x))}{B_{2n-1}(y(x))}
=
\frac{1}{2y(x)(1-y(x))}
\left(
\frac{B_{2n}(y(x))}{B_{2n-1}(y(x))}
-\bigl((4n-1)y(x)+1\bigr)
\right).
\]

For the type $A$ Eulerian polynomials, the recursive formula is
\[
A_{m+1}(z)=(mz+1)A_m(z)+z(1-z)A_m'(z).
\]
Solving for $A_m'(z)/A_m(z)$ gives
\[
\frac{A_m'(z)}{A_m(z)}
=
\frac{1}{z(1-z)}
\left(
\frac{A_{m+1}(z)}{A_m(z)}-(mz+1)
\right).
\]
With $m=2n$ and $z=y(x)$,
\[
\frac{A_{2n}'(y(x))}{A_{2n}(y(x))}
=
\frac{1}{y(x)(1-y(x))}
\left(
\frac{A_{2n+1}(y(x))}{A_{2n}(y(x))}
-\bigl(2n\,y(x)+1\bigr)
\right).
\]

Finally,
\[
y(x)(1-y(x))=-\frac{2(1-\sqrt{x})}{(1+\sqrt{x})^2},
\]
so
\[
\frac{1}{\sqrt{x}(1+\sqrt{x})^2}\frac{1}{2y(x)(1-y(x))}
=
-\frac{1}{4\sqrt{x}(1-\sqrt{x})},
\qquad
\frac{1}{\sqrt{x}(1+\sqrt{x})^2}\frac{1}{y(x)(1-y(x))}
=
-\frac{1}{2\sqrt{x}(1-\sqrt{x})}.
\]
Substituting these identities into the logarithmic derivative formula yields the claim.
\end{proof}

\section{Limit of the Stieltjes transform, limit density and limit distribution}

For \(n\in\mathbb{N}\), let \(\mu_n^{\Xi}\) and \(\mu_n^{\Lambda}\) denote the empirical zero measures of \(\widetilde{\Xi}_n\) and \(\widetilde{\Lambda}_n\), respectively. Define
\[
s_n^{\Xi}(z):=\frac{1}{n-1}\frac{\widetilde{\Xi}_n'(z)}{\widetilde{\Xi}_n(z)},
\qquad
s_n^{\Lambda}(z):=\frac{1}{n-1}\frac{\widetilde{\Lambda}_n'(z)}{\widetilde{\Lambda}_n(z)},
\qquad z\in\mathbb{C}\setminus[0,1].
\]
Then \(\displaystyle s_n^{\Xi}\) and \(\displaystyle s_n^{\Lambda}\) are the Stieltjes transforms of \(\displaystyle \mu_n^{\Xi}\) and \(\displaystyle \mu_n^{\Lambda}\), respectively, that is,
\begin{equation}\label{eq:sn-Stieltjes-Xi-Lambda}
s_n^{\Xi}(z)=\int_{\mathbb{R}}\frac{1}{z-x}\,\mu_n^{\Xi}(dx),
\qquad
s_n^{\Lambda}(z)=\int_{\mathbb{R}}\frac{1}{z-x}\,\mu_n^{\Lambda}(dx),
\qquad z\in\mathbb{C}\setminus[0,1].
\end{equation}

\begin{lemma}\label[lemma]{lem:s-n-limit-Xi-Lambda}
Let
\[
\Omega:=\mathbb C\setminus(-\infty,1],
\]
and define
\[
s_n^{\Xi}(z):=\frac{1}{n-1}\frac{\widetilde{\Xi}_n'(z)}{\widetilde{\Xi}_n(z)},
\qquad
s_n^{\Lambda}(z):=\frac{1}{n-1}\frac{\widetilde{\Lambda}_n'(z)}{\widetilde{\Lambda}_n(z)},
\qquad z\in\Omega.
\]
Fix the branch of \(\sqrt z\) on \(\Omega\) satisfying \(\Re(\sqrt z)>0\), and set
\(
u(z):=\dfrac{\sqrt z-1}{\sqrt z+1}.
\)

Then \(|u(z)|<1\) for every \(z\in\Omega\), and
\(
s_n^{\Xi}(z)\longrightarrow s(z),
\qquad
s_n^{\Lambda}(z)\longrightarrow s(z),
\)
locally uniformly on \(\Omega\), where
\[
s(z)
=
\frac{2}{\sqrt{z}\,(1-z)\,\log u(z)}.
\]
In particular, the two limiting functions coincide.
\end{lemma}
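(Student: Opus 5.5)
The plan is to combine the log-derivative formulas of \cref{lem:log-derivative-Xi-Lambda} with the ratio asymptotics of \cref{cor:A-B-ratio}, evaluated at the point \(u(z)=y(z)\). The identities of \cref{lem:log-derivative-Xi-Lambda} are rational identities in \(\sqrt{x}\). They therefore extend by analytic continuation to \(z\in\Omega\) with the chosen branch \(\Re\sqrt z>0\), wherever \(B_{2n-1}(u(z))\neq0\), resp. \(A_{2n}(u(z))\neq 0\). Note that \(y(x)=-(1-\sqrt x)/(1+\sqrt x)=u(x)\).

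\textbf{Step 1: geometry of \(u\).} For \(\Re w>0\) with \(w=\sqrt z\), one has \(|w-1|<|w+1|\), so \(|u(z)|<1\). We must also check that \(u(z)\notin(-1,0]\). If \(u=-r\) with \(r\in[0,1)\), then \(w=(1-r)/(1+r)\in(0,1]\), hence \(z=w^2\in(0,1]\), which is excluded from \(\Omega\). So \(u\) maps \(\Omega\) holomorphically into \(\mathbb D^\ast\), and compact subsets go to compact subsets. In particular the principal \(\log u(z)\) is holomorphic and nonzero on \(\Omega\).

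\textbf{Step 2: non-vanishing.} By \cref{theorem:real-rootedness}, all zeros of \(\widetilde\Xi_n,\widetilde\Lambda_n\) lie in \((0,1)\). Hence \(s_n^\Xi,s_n^\Lambda\) are holomorphic on \(\Omega\), and the Eulerian factors do not vanish at \(u(z)\). These factors are \(B_{2n-1}(u)\) and \(A_{2n}(u)\); their zeros are negative reals, and \(u(z)\notin(-\infty,0]\).

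\textbf{Step 3: the limit.} Divide the formula of \cref{lem:log-derivative-Xi-Lambda} by \(n-1\). The first two terms give \(\frac{2n-1}{(n-1)2\sqrt z(1+\sqrt z)}\to \frac{1}{\sqrt z(1+\sqrt z)}\) and \(O(1/n)\). By \cref{cor:A-B-ratio}, locally uniformly on \(\Omega\),
\[
\frac{1}{n-1}\frac{B_{2n}(u)}{B_{2n-1}(u)}\to \frac{4(1-u)}{-\log u},\qquad
\frac{1}{n-1}\frac{A_{2n+1}(u)}{A_{2n}(u)}\to \frac{2(1-u)}{-\log u},
\]
since \((2n-1)/(n-1)\to2\) and \(2n/(n-1)\to2\). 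The polynomial corrections contribute \(\frac{(4n-1)u+1}{n-1}\to 4u\), resp. \(\frac{2nu+1}{n-1}\to 2u\).

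In both cases the bracket, divided by \(n-1\) and multiplied by the prefactor (\(-\tfrac14\) resp. \(-\tfrac12\)), tends to
\[
-\frac{1}{\sqrt z(1-\sqrt z)}\Big(\frac{1-u}{-\log u}-u\Big).
\]
This is where the two limits coincide. Now substitute \(1-u=\frac{2}{1+\sqrt z}\) and \(u=\frac{\sqrt z-1}{\sqrt z+1}\). The term \(\frac{u}{\sqrt z(1-\sqrt z)}=-\frac{1}{\sqrt z(1+\sqrt z)}\) exactly cancels the first-term limit. What remains is
\[
\frac{2}{\sqrt z(1-\sqrt z)(1+\sqrt z)\log u}=\frac{2}{\sqrt z(1-z)\log u(z)}=s(z).
\]
Local uniformity follows from the local uniformity in \cref{cor:A-B-ratio}, composed with the continuous map \(u\) sending compacts to compacts (Step 1). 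The remaining prefactors are bounded on compacts of \(\Omega\), since \(z\neq0,1\) there.

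\textbf{Main obstacle.} The main risk is sign and branch bookkeeping. One must verify that the branch used in \cref{lem:log-derivative-Xi-Lambda}, originally stated for \(0<x<1\), continues consistently to \(\Re\sqrt z>0\); \(u=y\) indeed holds identically as rational functions of \(\sqrt z\). One must also confirm that \(\log u\) there is the principal branch, as required by \cref{cor:A-B-ratio}. This is guaranteed because \(u(\Omega)\subset\mathbb D^\ast\). The cancellation in Step 3 is the computational check I would carry out carefully.
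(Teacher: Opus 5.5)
Your proposal is correct and follows the paper's proof. The paper likewise divides the formulas of \cref{lem:log-derivative-Xi-Lambda} by $n-1$, applies \cref{cor:A-B-ratio} at the point $u(z)$, and simplifies, with the $u$-term cancelling against $\frac{1}{\sqrt z(1+\sqrt z)}$ exactly as you describe. Your additional checks are welcome refinements that the paper leaves implicit: that $u(\Omega)\subset\mathbb D^\ast$ (the paper only verifies $|u(z)|<1$, which alone does not place $u(z)$ in the domain of \cref{cor:A-B-ratio}), that the log-derivative identities continue from $(0,1)$ to $\Omega$, and that the Eulerian factors do not vanish there.
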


\begin{proof}
By \cref{lem:log-derivative-Xi-Lambda}, for \(z\in\Omega\),
\[
\frac{\widetilde{\Xi}_n'(z)}{\widetilde{\Xi}_n(z)}
=
\frac{2n-1}{2\sqrt{z}\,(1+\sqrt{z})}
-\frac{1}{2z}
-\frac{1}{4\sqrt{z}\,(1-\sqrt{z})}
\left[
\frac{B_{2n}(u(z))}{B_{2n-1}(u(z))}
-\bigl((4n-1)u(z)+1\bigr)
\right],
\]
\[
\frac{\widetilde{\Lambda}_n'(z)}{\widetilde{\Lambda}_n(z)}
=
\frac{2n-1}{2\sqrt{z}\,(1+\sqrt{z})}
-\frac{1}{2z}
-\frac{1}{2\sqrt{z}\,(1-\sqrt{z})}
\left[
\frac{A_{2n+1}(u(z))}{A_{2n}(u(z))}
-\bigl(2n\,u(z)+1\bigr)
\right].
\]

Dividing by \(n-1\), we obtain
\[
\begin{aligned}
s_n^{\Xi}(z)
&=
\frac{2n-1}{n-1}\,\frac{1}{2\sqrt{z}\,(1+\sqrt{z})}
-\frac{1}{2z(n-1)} \\
&\quad
-\frac{1}{4\sqrt{z}\,(1-\sqrt{z})}
\left[
\frac{1}{n-1}\frac{B_{2n}(u(z))}{B_{2n-1}(u(z))}
-\frac{4n-1}{n-1}u(z)
-\frac{1}{n-1}
\right],
\end{aligned}
\]
\[
\begin{aligned}
s_n^{\Lambda}(z)
&=
\frac{2n-1}{n-1}\,\frac{1}{2\sqrt{z}\,(1+\sqrt{z})}
-\frac{1}{2z(n-1)} \\
&\quad
-\frac{1}{2\sqrt{z}\,(1-\sqrt{z})}
\left[
\frac{1}{n-1}\frac{A_{2n+1}(u(z))}{A_{2n}(u(z))}
-\frac{2n}{n-1}u(z)
-\frac{1}{n-1}
\right].
\end{aligned}
\]

Since \(\Re(\sqrt z)>0\) on \(\Omega\), we have
\[
|u(z)|=\left|\frac{\sqrt z-1}{\sqrt z+1}\right|<1
\qquad (z\in\Omega).
\]
Therefore \cref{cor:A-B-ratio} applies with \(x=u(z)\), and yields
\[
\frac{1}{2n-1}\frac{B_{2n}(u(z))}{B_{2n-1}(u(z))}
\longrightarrow
\frac{2(1-u(z))}{-\log u(z)},
\]
\[
\frac{1}{2n}\frac{A_{2n+1}(u(z))}{A_{2n}(u(z))}
\longrightarrow
\frac{1-u(z)}{-\log u(z)},
\]
locally uniformly on \(\Omega\). Hence
\[
\frac{1}{n-1}\frac{B_{2n}(u(z))}{B_{2n-1}(u(z))}
\longrightarrow
\frac{4(1-u(z))}{-\log u(z)},
\qquad
\frac{1}{n-1}\frac{A_{2n+1}(u(z))}{A_{2n}(u(z))}
\longrightarrow
\frac{2(1-u(z))}{-\log u(z)},
\]
locally uniformly on \(\Omega\).

Moreover,
\[
\frac{2n-1}{n-1}\to 2,
\qquad
\frac{4n-1}{n-1}\to 4,
\qquad
\frac{2n}{n-1}\to 2,
\qquad
\frac{1}{n-1}\to 0.
\]
Substituting these limits, we obtain
\[
s_n^{\Xi}(z)\longrightarrow
\frac{1}{\sqrt{z}\,(1+\sqrt{z})}
-\frac{1}{4\sqrt{z}\,(1-\sqrt{z})}
\left[
\frac{4(1-u(z))}{-\log u(z)}-4u(z)
\right],
\]
\[
s_n^{\Lambda}(z)\longrightarrow
\frac{1}{\sqrt{z}\,(1+\sqrt{z})}
-\frac{1}{2\sqrt{z}\,(1-\sqrt{z})}
\left[
\frac{2(1-u(z))}{-\log u(z)}-2u(z)
\right],
\]
locally uniformly on \(\Omega\). Both limits simplify to
\[
s(z)
=
\frac{1}{\sqrt{z}\,(1+\sqrt{z})}
+
\frac{1}{\sqrt{z}\,(1-\sqrt{z})}
\left(
u(z)+\frac{1-u(z)}{\log u(z)}
\right) = \frac{2}{\sqrt{z}\,(1-z)\,\log u(z)}.
\]
This proves the claim.
\end{proof}

\begin{theorem}[Limiting density]\label{thm:limiting-density}
Let \(\mu_n^{\Xi}\) and \(\mu_n^{\Lambda}\) be the empirical zero measures associated with
\(\widetilde{\Xi}_n\) and \(\widetilde{\Lambda}_n\), respectively. Then both sequences
\(\mu_n^{\Xi}\) and \(\mu_n^{\Lambda}\) converge weakly, as \(n\to\infty\), to the same
probability measure \(\mu\), absolutely continuous on \((0,1)\), with density
\begin{equation}\label{eq:rho-density}
\rho(x)
=
\frac{2}{\sqrt{x}(1-x)\left(\log^2\!\frac{1-\sqrt{x}}{1+\sqrt{x}}+\pi^2\right)},
\qquad 0<x<1,
\end{equation}
and \(\rho(x)=0\) for \(x\notin(0,1)\).
\end{theorem}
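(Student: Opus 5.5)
The plan is to deduce weak convergence from the convergence of Stieltjes transforms in \cref{lem:s-n-limit-Xi-Lambda}, and then to identify the limit measure via the Stieltjes inversion formula.

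\textbf{Step 1 (tightness).} By \cref{theorem:real-rootedness}, every $\mu_n^{\Xi}$ and $\mu_n^{\Lambda}$ is a probability measure supported in $[0,1]$. By Helly's theorem, every subsequence therefore has a further subsequence converging weakly to some probability measure $\nu$ on $[0,1]$.

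\textbf{Step 2 (identifying the Stieltjes transform).} For $z\in\Omega=\mathbb C\setminus(-\infty,1]$ the kernel $x\mapsto 1/(z-x)$ is bounded and continuous on $[0,1]$. Weak convergence along the subsequence thus gives $s_n(z)\to\int(z-x)^{-1}\nu(dx)$. By \cref{lem:s-n-limit-Xi-Lambda}, this limit equals $s(z)$ on $\Omega$. The function $G_\nu(z)=\int(z-x)^{-1}\nu(dx)$ is analytic on $\mathbb C\setminus[0,1]$, and $\Omega\setminus[0,1]$ is a nonempty open subset of this connected domain. Hence $G_\nu$ is the analytic continuation of $s$, and $G_\nu$ is the same for every subsequential limit. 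Since a compactly supported measure is determined by its Stieltjes transform, all subsequential limits coincide, so the full sequences converge to one measure $\mu$. This holds for both families at once, because $s$ is the common limit.

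\textbf{Step 3 (computing the density).} Apply the inversion formula
\[
\rho(x)=-\frac1\pi\lim_{\varepsilon\downarrow0}\Im\, G_\mu(x+i\varepsilon),\qquad 0<x<1 .
\]
Continue $s(z)=\dfrac{2}{\sqrt z(1-z)\log u(z)}$ to $x+i0$. Here $\sqrt z\to\sqrt x>0$, and $u(z)\to u_0=\dfrac{\sqrt x-1}{\sqrt x+1}\in(-1,0)$. Since $\sqrt z$ picks up a small positive imaginary part, so does $u(z)$, and $\log u(z)\to\log|u_0|+i\pi$. This branch choice must be checked against continuity from $\Omega$ through the upper half plane; the principal log is continuous on the upper half plane, so this is consistent. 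Then
\[
\Im\frac{1}{\log|u_0|+i\pi}=\frac{-\pi}{\log^2|u_0|+\pi^2},
\]
which gives $\rho(x)=\dfrac{2}{\sqrt x(1-x)(\log^2\frac{1-\sqrt x}{1+\sqrt x}+\pi^2)}$, matching \eqref{eq:rho-density}.

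\textbf{Step 4 (no singular part).} We must exclude atoms at $0$ and $1$ and any singular continuous part. I would first check directly that $\int_0^1\rho=1$. The substitution $\sqrt x=\tanh(w/2)$ gives $\log\frac{1-\sqrt x}{1+\sqrt x}=-w$ and $\frac{dx}{\sqrt x(1-x)}=dw$. So $\int_0^1\rho\,dx=\int_0^\infty\frac{2\,dw}{w^2+\pi^2}=1$. Since the absolutely continuous part already has full mass and $\mu$ is a probability measure, there is nothing left for atoms or a singular part.

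\textbf{Main obstacle.} The delicate points are the boundary-value computation in Step 3 and the justification of Step 2. Local uniform convergence on $\Omega$ alone would not directly cover points near $(0,1)$, but the identity-theorem argument handles this. Near the endpoints $0$ and $1$, $\log u$ blows up or $\sqrt z$ degenerates, but the mass check in Step 4 sidesteps any endpoint analysis.
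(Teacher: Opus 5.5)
Your proposal is correct and follows the paper's route. Both arguments take the convergence $s_n^{\Xi},s_n^{\Lambda}\to s$ from the Stieltjes-transform lemma and then apply Stieltjes inversion with the boundary value $\log u(x+i0)=\log\frac{1-\sqrt{x}}{1+\sqrt{x}}+i\pi$. Your Helly/uniqueness argument and your mass check $\int_0^1\rho\,dx=1$ go further than the paper, which only asserts two things: that $s(z)=1/z+O(z^{-2})$ gives weak convergence to a probability measure, and (implicitly) that $\mu$ has no atoms at $0$ or $1$ and no singular part.
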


\begin{proof}
Define
\[
s_n^{\Xi}(z):=\frac{1}{n-1}\frac{\widetilde{\Xi}_n'(z)}{\widetilde{\Xi}_n(z)},
\qquad
s_n^{\Lambda}(z):=\frac{1}{n-1}\frac{\widetilde{\Lambda}_n'(z)}{\widetilde{\Lambda}_n(z)}.
\]
Since all zeros of \(\widetilde{\Xi}_n\) and \(\widetilde{\Lambda}_n\) lie in \((0,1)\), the functions
\(s_n^{\Xi}\) and \(s_n^{\Lambda}\) are the Stieltjes transforms of \(\mu_n^{\Xi}\) and
\(\mu_n^{\Lambda}\), respectively, and are analytic on \(\mathbb{C}\setminus[0,1]\).

Let \(\Omega:=\mathbb{C}\setminus(-\infty,1]\). By \cref{lem:s-n-limit-Xi-Lambda},
\[
s_n^{\Xi}(z)\to s(z),
\qquad
s_n^{\Lambda}(z)\to s(z),
\]
locally uniformly on \(\Omega\), where
\begin{equation}\label{eq:s-limit}
s(z)
=
\frac{2}{\sqrt{z}\,(1-z)\,\log u(z)},
\end{equation}
with
\(
u(z):=\dfrac{\sqrt{z}-1}{\sqrt{z}+1},
\)
and where \(\sqrt{z}\) denotes the branch on \(\Omega\) satisfying \(\Re(\sqrt z)>0\).
In particular, \(s\) is analytic on \(\Omega\), hence on the upper half-plane.

Since \(\mu_n^{\Xi}\) and \(\mu_n^{\Lambda}\) are probability measures, their Stieltjes transforms satisfy
\[
s_n^{\Xi}(z)=\frac{1}{z}+O(z^{-2}),
\qquad
s_n^{\Lambda}(z)=\frac{1}{z}+O(z^{-2}),
\qquad z\to\infty.
\]
Passing to the limit, we obtain
\[
s(z)=\frac{1}{z}+O(z^{-2}),
\qquad z\to\infty.
\]
Therefore \(s\) is the Stieltjes transform of a unique probability measure \(\mu\) supported on \([0,1]\), and
\[
\mu_n^{\Xi}\Rightarrow\mu,
\qquad
\mu_n^{\Lambda}\Rightarrow\mu
\]
weakly.

It remains to identify the density of \(\mu\) on \((0,1)\). Fix \(x\in(0,1)\) and set
\(
q(x):=\dfrac{1-\sqrt{x}}{1+\sqrt{x}}\in(0,1).
\)
As \(\varepsilon\to0^+\), we have \(u(x+i\varepsilon)\to -q(x)\), and with the branch of the logarithm induced from \(\Omega\),
\[
\lim_{\varepsilon\to0^+}\log u(x+i\varepsilon)=\log q(x)+i\pi.
\]
Hence
\[
\lim_{\varepsilon\to0^+}\frac{1}{\log u(x+i\varepsilon)}
=
\frac{\log q(x)-i\pi}{\log^2 q(x)+\pi^2}.
\]

Moreover,
\[
\lim_{\varepsilon\to0^+}(1-u(x+i\varepsilon))
=
1+q(x)
=
\frac{2}{1+\sqrt{x}},
\]
so that
\[
\lim_{\varepsilon\to0^+}\frac{1-u(x+i\varepsilon)}{\sqrt{x}(1-\sqrt{x})}
=
\frac{2}{\sqrt{x}(1-x)}.
\]
All remaining terms in \eqref{eq:s-limit} have real limits as \(\varepsilon\to0^+\). Therefore
\[
\Im\!\left(\lim_{\varepsilon\to0^+} s(x+i\varepsilon)\right)
=
\frac{2}{\sqrt{x}(1-x)}
\Im\!\left(\frac{1}{\log q(x)+i\pi}\right)
=
-\frac{2\pi}{\sqrt{x}(1-x)\bigl(\log^2 q(x)+\pi^2\bigr)}.
\]
By the Stieltjes inversion formula,
\(
\rho(x)
=
-\dfrac{1}{\pi}\Im\!\left(\displaystyle\lim_{\varepsilon\to0^+} s(x+i\varepsilon)\right),
\qquad 0<x<1.
\)
Thus
\[
\rho(x)
=
\frac{2}{\sqrt{x}(1-x)\bigl(\log^2 q(x)+\pi^2\bigr)}.
\]
Since \(q(x)=\dfrac{1-\sqrt{x}}{1+\sqrt{x}}\), this is exactly \eqref{eq:rho-density}. This proves the claim.
\end{proof}

\begin{theorem}[Limiting zero distribution]\label{thm:limiting-zero-distribution}
Let \(\mu_n^{\Xi}\) and \(\mu_n^{\Lambda}\) be the empirical zero measures associated with
\(\widetilde{\Xi}_n\) and \(\widetilde{\Lambda}_n\), respectively. Let
\(\displaystyle F_n^{\Xi}\) and \(\displaystyle F_n^{\Lambda}\) denote their distribution functions, i.e.
\[
F_n^{\Xi}(x):=\mu_n^{\Xi}((-\infty,x]),
\qquad
F_n^{\Lambda}(x):=\mu_n^{\Lambda}((-\infty,x]),
\qquad x\in\mathbb{R}.
\]
Then \(F_n^{\Xi}(x)\) and \(F_n^{\Lambda}(x)\) converge pointwise on \(\mathbb{R}\) to the same function
\[
F(x):=
\begin{cases}
0, & x\le 0,\\[1mm]
\displaystyle \frac{2}{\pi}\arctan\!\left(
  \frac{1}{\pi}\log\frac{1+\sqrt{x}}{1-\sqrt{x}}
\right), & 0<x<1,\\[3mm]
1, & x\ge 1.
\end{cases}
\]
In particular, the empirical distribution functions associated with the real zeros of
\(\widetilde{\Xi}_n\) and \(\widetilde{\Lambda}_n\) in \((0,1)\) converge to \(F\) at every
continuity point of \(F\).
\end{theorem}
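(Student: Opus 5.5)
The plan is to derive the statement from \cref{thm:limiting-density} in two steps. First we identify the distribution function of the limit measure \(\mu\) explicitly. Then we pass from weak convergence to pointwise convergence of distribution functions.

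\textbf{Step 1: the distribution function of \(\mu\).} By \cref{thm:limiting-density}, \(\mu\) is a probability measure on \([0,1]\) with density \(\rho\) on \((0,1)\). Its distribution function is therefore \(F(x)=\int_0^x\rho(t)\,dt\) for \(0<x<1\), with \(F=0\) on \((-\infty,0]\) and \(F=1\) on \([1,\infty)\).

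To evaluate the integral, set \(L(t):=\log\frac{1+\sqrt t}{1-\sqrt t}\). A direct differentiation gives
\[
L'(t)=\frac{1}{2\sqrt t}\left(\frac{1}{1+\sqrt t}+\frac{1}{1-\sqrt t}\right)=\frac{1}{\sqrt t\,(1-t)}.
\]
Hence \(\rho(t)=\dfrac{2L'(t)}{L(t)^2+\pi^2}\), since \(\log^2\frac{1-\sqrt t}{1+\sqrt t}=L(t)^2\). This is exactly
\[
\frac{d}{dt}\,\frac{2}{\pi}\arctan\!\bigl(L(t)/\pi\bigr).
\]
Because \(L(t)\to0\) as \(t\to0^+\), integrating from \(0\) to \(x\) yields the stated formula for \(F(x)\) on \((0,1)\). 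As a consistency check, \(L(t)\to+\infty\) as \(t\to1^-\), so \(F(1^-)=\frac{2}{\pi}\cdot\frac{\pi}{2}=1\). This confirms that \(\rho\) has total mass one and that \(\mu\) has no atoms at \(0\) or \(1\).

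\textbf{Step 2: pointwise convergence.} By the portmanteau theorem, weak convergence \(\mu_n^{\Xi}\Rightarrow\mu\) and \(\mu_n^{\Lambda}\Rightarrow\mu\) (from \cref{thm:limiting-density}) gives \(F_n^{\Xi}(x)\to F(x)\) and \(F_n^{\Lambda}(x)\to F(x)\) at every continuity point of \(F\).

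Since \(\mu\) is absolutely continuous, \(F\) is continuous on all of \(\mathbb R\). The only points that might need care are \(x=0\) and \(x=1\), and continuity there follows from \(F(0^+)=0\) and \(F(1^-)=1\), both established in Step 1. Hence the convergence holds at every \(x\in\mathbb R\).

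For \(x\le0\) or \(x\ge1\) this can also be seen directly. By \cref{theorem:real-rootedness}, all zeros of \(\widetilde\Xi_n\) and \(\widetilde\Lambda_n\) lie in \((0,1)\). So \(F_n(x)=0\) for \(x\le 0\) and \(F_n(x)=1\) for \(x\ge1\), for every \(n\).

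\textbf{Main obstacle.} The only delicate point is the normalization in Step 1. We must confirm that the antiderivative vanishes at \(0\) and tends to \(1\) at \(1\), so that the limit of \(\log u\) taken in \cref{thm:limiting-density} indeed produced a probability density without lost mass at the endpoints. If one wants to avoid relying on the mass statement in \cref{thm:limiting-density}, the computation of \(F(1^-)=1\) in Step 1 supplies it independently.
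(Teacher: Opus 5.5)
Your proposal is correct and follows the paper's proof essentially verbatim. You identify \(F\) as \(\int_0^x\rho\) by checking that \(\frac{2}{\pi}\arctan(L(t)/\pi)\) is an antiderivative of \(\rho\) vanishing at \(0^+\), then use continuity of \(F\) and the standard criterion passing from weak convergence to convergence of distribution functions. Your explicit computation \(L'(t)=1/(\sqrt t\,(1-t))\) spells out the paper's ``direct computation'', and your check \(F(1^-)=1\) confirms that \(\mu\) carries no mass at the endpoints \(0\) and \(1\).
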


\begin{proof}
By \cref{thm:limiting-density}, the measures \(\mu_n^{\Xi}\) and \(\mu_n^{\Lambda}\) converge
weakly to the same probability measure \(\mu\) supported on \((0,1)\) with density
\[
\rho(x)=\frac{2}{
  \sqrt{x}\,(1-x)
  \bigl(\log^2\!\tfrac{1-\sqrt{x}}{1+\sqrt{x}}+\pi^2\bigr)
},
\qquad 0<x<1,
\]
and \(\rho(x)=0\) for \(x\notin(0,1)\). Hence the distribution function of \(\mu\) is
\[
F(x)=\mu((-\infty,x])=
\begin{cases}
0, & x\le 0,\\[1mm]
\displaystyle \int_0^x \rho(t)\,dt, & 0<x<1,\\[3mm]
1, & x\ge 1.
\end{cases}
\]

For \(0<x<1\), define
\[
G(x):=\frac{2}{\pi}\arctan\!\left(
\frac{1}{\pi}\log\frac{1+\sqrt{x}}{1-\sqrt{x}}
\right).
\]
Then \(G(0+)=0\), and a direct computation shows that \(G'(x)=\rho(x)\) on \((0,1)\).
Therefore,
\[
G(x)=\int_0^x\rho(t)\,dt,
\qquad 0<x<1,
\]
so that
\[
F(x)=
\begin{cases}
0, & x\le 0,\\[1mm]
\displaystyle \frac{2}{\pi}\arctan\!\left(
  \frac{1}{\pi}\log\frac{1+\sqrt{x}}{1-\sqrt{x}}
\right), & 0<x<1,\\[3mm]
1, & x\ge 1.
\end{cases}
\]

Since \(\mu_n^{\Xi}\Rightarrow\mu\) and \(\mu_n^{\Lambda}\Rightarrow\mu\), and since \(F\) is
continuous on \(\mathbb{R}\), standard results on weak convergence of probability measures
(e.g. \cite[Thm.~2.3]{Billingsley-convergence}) imply that
\[
F_n^{\Xi}(x)\to F(x),
\qquad
F_n^{\Lambda}(x)\to F(x),
\qquad x\in\mathbb{R}.
\]
This proves the claim.
\end{proof}

\section{Interpretation}

Using \cref{thm:limiting-density,thm:limiting-zero-distribution,cor:exact_degree}, one obtains a rather precise asymptotic description of the real zeros of both families \(\widetilde{\Xi}_n\) and \(\widetilde{\Lambda}_n\). Indeed, \cref{cor:exact_degree} shows that each of these polynomials has exactly \(n-1\) real zeros. Writing them in increasing order as
\[
0<x_{1,n}<x_{2,n}<\cdots<x_{n-1,n}<1,
\]
the weak convergence of the corresponding empirical zero measures implies that their normalized counting measures are governed by the same limiting probability measure \(\mu\) on \((0,1)\) with density~\(\rho\). Equivalently, the associated distribution functions converge to the same limiting distribution function \(F\). Hence the ordered zeros satisfy the asymptotic quantile relation
\[
F(x_{k,n})\approx \frac{k}{\,n-1\,},
\qquad n\to\infty,
\]
and therefore admit the asymptotic localization
\[
x_{k,n}\approx F^{-1}\!\left(\frac{k}{\,n-1\,}\right),
\qquad 1\le k\le n-1.
\]
Since \(F\) is explicit, this can be written more concretely as
\(\displaystyle
x_{k,n}\approx
\tanh^2\!\left(
\frac{\pi}{2}\tan\!\left(\frac{\pi k}{2(n-1)}\right)
\right).
\)
In this sense, the function \(F\), or equivalently its inverse \(F^{-1}\), provides the asymptotic profile of the full ordered zero set.

\vspace{0.2cm}

The density \(\rho\) shows that zeros accumulate strongly near both endpoints of the interval. As \(x\downarrow0\),
\[
\rho(x)\sim \frac{2}{\pi^2\sqrt{x}},
\]
so the density has an integrable square-root singularity at the origin. Integrating this asymptotic form gives
\[
F(x)\approx \frac{4}{\pi^2}\sqrt{x},
\qquad x\downarrow0,
\]
and inverting this relation yields, for fixed \(k\) and \(n\to\infty\),
\(\displaystyle
x_{k,n}\sim \frac{\pi^4}{16}\,\frac{k^2}{(n-1)^2}.
\)
Thus the smallest zeros are located on the \(n^{-2}\)-scale. More generally, the zeros near the left edge exhibit a quadratic, or parabolic, spacing pattern.

\vspace{0.2cm}

The situation near \(x=1\) is markedly different. Writing
\(\displaystyle
L(x):=\log\frac{1+\sqrt{x}}{1-\sqrt{x}},
\)
we have \(L(x)\to+\infty\) as \(x\uparrow1\), and the explicit formula for \(F\) implies
\[
1-F(x)\sim \frac{2}{L(x)},
\qquad x\uparrow1.
\]
Since \(L(x)\) grows only logarithmically in the distance to \(1\), the limiting distribution has a very slowly decaying right tail. If \(n-1-k\) remains fixed while \(n\to\infty\), then heuristically
\[
1-F(x_{k,n})\approx \frac{n-1-k}{\,n-1\,}\sim \frac{2}{L(x_{k,n})},
\]
which shows that \(L(x_{k,n})\) is of order \(n\). Consequently, \(1-x_{k,n}\) is exponentially small in \(n\). In other words, the largest zeros approach \(1\) at an essentially exponential rate, much faster than any algebraic scale.

\vspace{0.2cm}

To summarize, the real zeros of \(\widetilde{\Xi}_n\) and \(\widetilde{\Lambda}_n\) both concentrate on \((0,1)\) according to the same limiting law. The density becomes singular at both endpoints, but the two edges have different character: near \(0\) one sees a square-root singularity, while near \(1\) the accumulation is governed by a logarithmic effect. Accordingly, the smallest zeros are of order \(n^{-2}\), whereas the largest zeros lie exponentially close to \(1\). Away from the endpoints, the zeros are distributed on the usual \(1/n\)-scale and are described by the smooth bulk profile determined by~\(\rho\).

\section{Numerical experiments}

We conclude with a numerical illustration of the cumulative distribution functions corresponding to the real zeros of \(\widetilde{\Xi}_n\) and \(\widetilde{\Lambda}_n\) in \((0,1)\).

Our emphasis is not on computing individual zeros to very high accuracy, but rather on comparing the empirical distribution functions \(F_n\) with the limiting distribution function \(F\). The resulting plots provide a clear visual confirmation of the asymptotic results established in \cref{thm:limiting-density,thm:limiting-zero-distribution}: as \(n\) grows, the empirical CDFs approach the same limiting curve more and more closely.

One also sees the characteristic boundary behavior predicted by the limiting law. Near \(0\), the smallest zeros move toward the origin on the natural \(n^{-2}\)-scale, whereas near \(1\), the largest zeros accumulate very quickly, producing a sharp increase of the CDF close to the right endpoint.

\begin{center}
	\includegraphics[width=0.95\textwidth]{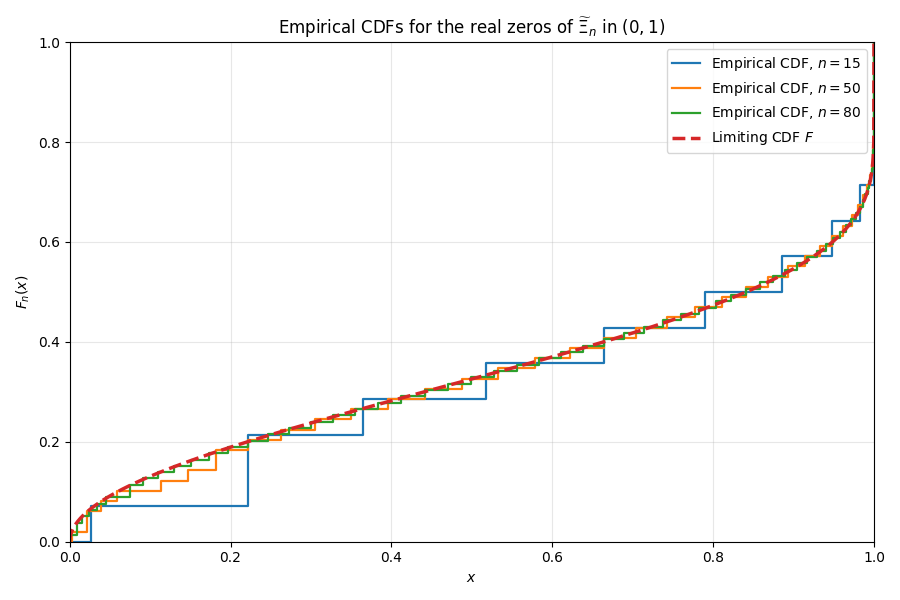}
\end{center}

\begin{center}
	\includegraphics[width=0.95\textwidth]{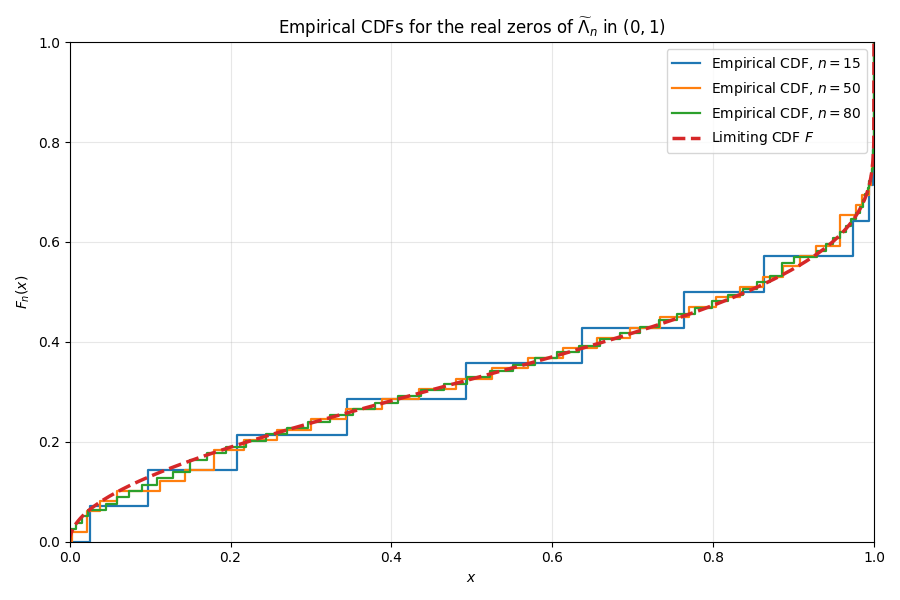}
\end{center}

\begin{proposition}\label{prop:log-tanh-integral}
One has
\[
\int_{0}^{\infty}\frac{\log \tanh\!\left(\frac{\pi x}{2}\right)}{1+x^{2}}\,dx
=
\frac{\pi}{4}\log\!\frac{4}{\pi^{2}}
=
-\frac{\pi}{2}\log\!\frac{\pi}{2}.
\]
\end{proposition}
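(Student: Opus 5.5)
Expand $\log\tanh(\pi x/2)$ as a series and integrate term by term against $1/(1+x^2)$. For $x>0$,
\[
\log\tanh\!\left(\tfrac{\pi x}{2}\right)=\log\frac{1-e^{-\pi x}}{1+e^{-\pi x}}=-2\sum_{k\ge0}\frac{e^{-(2k+1)\pi x}}{2k+1}.
\]
All terms have the same sign, so monotone convergence justifies the interchange:
\[
I:=\int_0^\infty\frac{\log\tanh(\pi x/2)}{1+x^2}\,dx=-2\sum_{k\ge0}\frac{1}{2k+1}\int_0^\infty\frac{e^{-(2k+1)\pi x}}{1+x^2}\,dx .
\]
The inner integrals have no elementary closed form (they involve sine and cosine integrals). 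This route therefore only works if the resulting double sum can be recognised, which is awkward. I would abandon it in favour of a Fourier / Parseval approach.

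The better route rests on the classical integral
\[
\int_0^\infty\frac{\log\tanh(\pi x/2)}{1+x^2}\,dx=\int_0^\infty \frac{\log\tanh(\pi x/2)}{x}\cdot\frac{x}{1+x^2}\,dx,
\]
handled through the Fourier cosine transform of $\log\coth$. Concretely, I would use the known expansion
\[
\log\coth\!\left(\tfrac{\pi x}{2}\right)=2\sum_{k\ge0}\frac{e^{-(2k+1)\pi x}}{2k+1}
\]
together with the Laplace-type identity
\[
\int_0^\infty\frac{e^{-a x}}{1+x^2}\,dx=\int_0^\infty\frac{\sin t}{t+a}\,dt .
\]
Summing over $a=(2k+1)\pi$ weighted by $1/(2k+1)$ turns $I$ into
\[
I=-2\int_0^\infty \sin t\,\sum_{k\ge0}\frac{1}{(2k+1)(t+(2k+1)\pi)}\,dt .
\]
The inner sum is a digamma difference,
\[
\sum_{k\ge0}\frac{1}{(2k+1)(t+(2k+1)\pi)}=\frac{1}{2t}\left[\psi\!\left(\tfrac{t}{2\pi}+\tfrac12\right)-\psi\!\left(\tfrac12\right)\right],
\]
which reduces $I$ to an integral of $\sin t\,\psi(\cdot)/t$.

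My preferred, cleaner alternative is a contour argument. First substitute $x\mapsto 1/x$; since $1/(1+x^2)\,dx$ is invariant under this map,
\[
I=\frac12\int_0^\infty\frac{\log\tanh(\pi x/2)+\log\tanh(\pi/(2x))}{1+x^2}\,dx .
\]
Then write $x=\tan\theta$, which turns the measure into $d\theta$ on $(0,\pi/2)$, and recognise a Malmsten-type integral. The identity I would aim to invoke is the classical Malmsten/Vardi evaluation
\[
\int_0^\infty\frac{\log\tanh(\pi x/2)}{1+x^2}\,dx=\frac{\pi}{2}\log\frac{2}{\pi}\cdot\ldots,
\]
obtained from
\[
\int_0^\infty\frac{\log\log(1/t)}{1+t^2}\,dt=\frac{\pi}{2}\log\!\left(\frac{\sqrt{2\pi}\,\Gamma(3/4)}{\Gamma(1/4)}\right).
\]
The cleanest self-contained derivation I would actually execute is as follows. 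Integrate $f(z)=\log\tanh(\pi z/2)/(1+z^2)$ after extending evenly to $\mathbb R$. Then close the contour in the upper half-plane, where the pole at $z=i$ contributes
\[
2\pi i\cdot\frac{\log\tanh(i\pi/2)}{2i}=\pi\log(i\tan(\pi/2)),
\]
which is singular. Because of that singularity, I would instead use the parameter family $J(a)=\int_0^\infty \log\tanh(ax)/(1+x^2)\,dx$. Differentiating in $a$ gives
\[
J'(a)=\int_0^\infty\frac{2x}{\sinh(2ax)(1+x^2)}\,dx,
\]
which can be evaluated by residues at $x=i k\pi/(2a)$, giving a series in $a$. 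Integrating from $a=\infty$ (where $J=0$) down to $a=\pi/2$ then yields $I$.

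Finally, simplify to the stated form via
\[
\frac{\pi}{4}\log\frac{4}{\pi^2}=-\frac{\pi}{2}\log\frac{\pi}{2}.
\]
The main obstacle is evaluating $J'(a)$ and integrating it back. At $a=\pi/2$ I expect the residue series to telescope into a logarithmic derivative of $\Gamma$, so that the Gamma factors cancel exactly, leaving only $\log(\pi/2)$.
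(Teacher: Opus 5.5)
Your last route is sound and genuinely different from the paper's: differentiate $J(a)=\int_0^\infty \log\tanh(ax)\,\frac{dx}{1+x^2}$ in $a$, evaluate $J'(a)$ by residues, and integrate back from $a=\infty$. The paper never evaluates the integral directly. It reads off $\int_0^1\log t\,\rho(t)\,dt=\log\frac{4}{\pi^2}$ from three ingredients: the Vi\`ete product of the zeros of $\widetilde{\Xi}_n$, the asymptotics of $E_{2n}$, and the weak convergence $\mu_n^{\Xi}\Rightarrow\mu$. It then substitutes $t=\tanh^2(u/2)$.

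\textbf{The gap.} As written, your proposal stops exactly where the proof should begin. The only step with content is computing $J'(a)$ and integrating it back, and you replace it with ``I expect the residue series to telescope''. Nothing earlier fills this in. The series route is abandoned, and the digamma route is left at an unevaluated integral. The Malmsten--Vardi paragraph ends in a literal ``$\cdots$'' and misquotes the formula: $\int_0^\infty \log\log(1/t)\,\frac{dt}{1+t^2}$ is not even real-valued, since Vardi's evaluation is over $(0,1)$. That paragraph also gives no link to $I$. Finally, the direct contour fails for a more basic reason than the pole at $i$: $\log\tanh(\pi z/2)$ has logarithmic branch points at every $z=ik$, $k\ge 1$.

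\textbf{How to close it.} The missing step does go through.
\begin{itemize}
\item Differentiation under the integral is justified on $a\ge a_0>0$ by the majorant $2x/(\sinh(2a_0x)(1+x^2))$.
\item $J(a)\to 0$ as $a\to\infty$ by dominated convergence.
\item Extend $x/(\sinh(bx)(1+x^2))$ evenly and close over semicircles of radius $(N+\frac12)\pi/b$. Combining the residue at $x=i$ with the Mittag-Leffler expansion of $\pi/\sin b$ gives $\int_0^\infty\frac{x\,dx}{\sinh(bx)(1+x^2)}=\frac{\pi}{2b}+\pi\sum_{k\ge1}\frac{(-1)^k}{b+k\pi}$.
\item Since $J'(a)=2\int_0^\infty\frac{x\,dx}{\sinh(2ax)(1+x^2)}$, taking $b=2a$ and $z=2a/\pi$ yields
\[
J'(a)=\psi\!\left(\tfrac{z+1}{2}\right)-\psi\!\left(\tfrac{z}{2}\right)-\frac1z .
\]
\end{itemize}
A pole of $1/\sinh(2ax)$ collides with $x=i$ when $a\in\frac{\pi}{2}\mathbb{N}$, including your endpoint $a=\pi/2$. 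This is harmless, because both sides are continuous in $a$. Therefore
\[
I=-\int_{\pi/2}^{\infty}J'(a)\,da=-\frac{\pi}{2}\left[\,2\log\frac{\Gamma\bigl(\frac{z+1}{2}\bigr)}{\Gamma\bigl(\frac{z}{2}\bigr)}-\log z\right]_{z=1}^{z=\infty}=-\frac{\pi}{2}\bigl(-\log 2+\log\pi\bigr)=-\frac{\pi}{2}\log\frac{\pi}{2}.
\]
Here Stirling's $\Gamma(\frac{z+1}{2})/\Gamma(\frac z2)\sim\sqrt{z/2}$ handles the upper limit and $\Gamma(\frac12)=\sqrt{\pi}$ the lower one. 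So nothing literally telescopes or cancels. The residue series is an alternating (digamma) series; the $2$ comes from Stirling and the $\pi$ from $\Gamma(\frac12)$.

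\textbf{Comparison with the paper.} Once completed, your route is self-contained and independent of all the zero-distribution machinery. It also avoids a delicate point in the paper's argument. There, $\int\log t\,\mu_n^{\Xi}(dt)\to\int\log t\,\rho(t)\,dt$ is inferred from weak convergence, although $\log t$ is unbounded at $0$. Weak convergence plus upper semicontinuity of $\log t$ only gives $\frac{4}{\pi}J\ge\log\frac{4}{\pi^2}$. Equality needs uniform integrability near $0$, i.e.\ control of the smallest zeros, which the paper does not supply. The paper's argument is not meant as an efficient evaluation. It shows that this constant is encoded in the endpoint ratio and the limiting zero law.
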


\begin{proof}
Let $\displaystyle J:=\int_{0}^{\infty}\frac{\log \tanh\!\left(\frac{\pi x}{2}\right)}{1+x^{2}}\,dx$.
If $\displaystyle \rho_{1,n},\dots,\rho_{n-1,n}\in(0,1)$ are the zeros of $\widetilde{\Xi}_n$ and $\displaystyle \mu_n^{\Xi}:=\frac1{n-1}\sum_{k=1}^{n-1}\delta_{\rho_{k,n}}$, then Viète's formula gives
$\displaystyle \prod_{k=1}^{n-1}\rho_{k,n}=\left|\frac{\widetilde{\Xi}_n(0)}{\operatorname{lc}(\widetilde{\Xi}_n)}\right|$,
hence
$\displaystyle \int_0^1 \log t\,\mu_n^{\Xi}(dt)=\frac1{n-1}\sum_{k=1}^{n-1}\log\rho_{k,n}
=\frac1{n-1}\log\left|\frac{\widetilde{\Xi}_n(0)}{\operatorname{lc}(\widetilde{\Xi}_n)}\right|$.
By \cref{prop:endpoint-ratio-limit},
$\displaystyle \lim_{n\to\infty}\left|\frac{\widetilde{\Xi}_n(0)}{\operatorname{lc}(\widetilde{\Xi}_n)}\right|^{1/(n-1)}=\frac{4}{\pi^2}$,
so
$\displaystyle \lim_{n\to\infty}\int_0^1 \log t\,\mu_n^{\Xi}(dt)=\log\frac{4}{\pi^2}$.

By \cref{thm:limiting-density}, $\mu_n^{\Xi}\Rightarrow\mu$, where $\mu$ has density
$\displaystyle \rho(t)=\frac{2}{\sqrt{t}(1-t)\left(\log^2\!\frac{1-\sqrt{t}}{1+\sqrt{t}}+\pi^2\right)}$ on $0<t<1$.
Thus $\displaystyle \int_0^1 \log t\,\rho(t)\,dt=\log\frac{4}{\pi^2}$.
Now set $\displaystyle u=\log\frac{1+\sqrt{t}}{1-\sqrt{t}}$, so that $\displaystyle t=\tanh^2\!\left(\frac{u}{2}\right)$, $\displaystyle dt=\sqrt{t}(1-t)\,du$, $\displaystyle \log t=2\log\tanh\!\left(\frac{u}{2}\right)$, and $\displaystyle \log\frac{1-\sqrt{t}}{1+\sqrt{t}}=-u$.
Therefore $\displaystyle \rho(t)\,dt=\frac{2\,du}{u^2+\pi^2}$, and hence
\[
\int_0^1 \log t\,\rho(t)\,dt
=
4\int_0^\infty \frac{\log\tanh(u/2)}{u^2+\pi^2}\,du
=
\frac{4}{\pi}\int_0^\infty \frac{\log\tanh(\pi x/2)}{1+x^2}\,dx
=
\frac{4}{\pi}J.
\]
Combining this with $\displaystyle \int_0^1 \log t\,\rho(t)\,dt=\log\frac{4}{\pi^2}$ gives
$\displaystyle \frac{4}{\pi}J=\log\frac{4}{\pi^2}$, hence
$\displaystyle J=\frac{\pi}{4}\log\frac{4}{\pi^2}=-\frac{\pi}{2}\log\frac{\pi}{2}$.
\end{proof}

\begin{remark}
The evaluation
\[
\int_{0}^{\infty}\frac{\log \tanh\!\left(\frac{\pi x}{2}\right)}{1+x^{2}}\,dx
=
-\frac{\pi}{2}\log\!\frac{\pi}{2}
\]
is a special case of a more general logarithmic hyperbolic tangent integral formula available in the literature; see, for example, Reynolds and Stauffer~ \textnormal{\cite{ReynoldsStauffer2020}}. More precisely, in their general identity
\[
\int_0^\infty \frac{\log\!\bigl(\tanh(\alpha y/2)\bigr)}{b^2+y^2}\,dy
=
\frac{\pi}{2b}\log\!\left[
\frac{b\alpha}{2\pi}
\left(
\frac{\Gamma\!\left(\frac{\pi+b\alpha}{2\pi}\right)}
{\Gamma\!\left(1+\frac{b\alpha}{2\pi}\right)}
\right)^2
\right],
\]
one takes
\(
b=1,
\qquad
\alpha=\pi.
\) This gives
\[
\int_0^\infty \frac{\log \tanh(\pi y/2)}{1+y^2}\,dy
=
\frac{\pi}{2}\log\!\left[
\frac12
\left(\frac{\Gamma(1)}{\Gamma(3/2)}\right)^2
\right]
=
\frac{\pi}{2}\log\!\frac{2}{\pi}
=
-\frac{\pi}{2}\log\!\frac{\pi}{2}.
\]
The point of \cref{prop:log-tanh-integral} is therefore not the novelty of the integral evaluation itself, but rather that the same value follows naturally from \cref{prop:endpoint-ratio-limit,thm:limiting-density} and the asymptotic zero distribution of the polynomials \(\widetilde{\Xi}_n\).
\end{remark}

\section*{Acknowledgments}
The author acknowledges the use of an AI language model for assistance with literature search, presentation of the manuscript, verification of results and clarifying standard combinatorial identities.

\printbibliography

\end{document}